\newtheorem{theorem}{Theorem}
\newtheorem{proposition}{Proposition}
\newtheorem{corollary}{Corollary}
\newtheorem{lemma}{Lemma}
\newcommand{\ip}[2]{\langle #1, #2\rangle}
\newcommand{\cI}{\mathcal{I}}
\newcommand{\da}{\mathsf{A}}
\newcommand{\db}{\mathsf{B}}
\newcommand{\exchg}{\mathsf{E}}
\newcommand{\CSH}{\mathsf{CSH}}
\newcommand{\dd}{\mathsf{D}}
\newcommand{\falpha}{\boldsymbol{\alpha}}
\newcommand{\fbeta}{\boldsymbol{\beta}}
\newcommand{\fgamma}{\boldsymbol{\gamma}}
\newcommand{\fomega}{\boldsymbol{\omega}}
\newcommand{\fzeta}{\boldsymbol{\zeta}}
\newcommand{\fmu}{\boldsymbol{\mu}}
\newcommand{\fTheta}{\boldsymbol{\Theta}}
\newcommand{\fGamma}{\boldsymbol{\Gamma}}
\newcommand{\fupsilon}{\boldsymbol{\upsilon}}
\newcommand{\FASS}{\mathsf{FAS}}
\numberwithin{equation}{section}
\newcommand{\nDC}[1]{\mathscr{D}(\mathbb{R}^{#1})} 
\newcommand{\nSCpr}[1]{\mathscr{S}'(\mathbb{R}^{#1})} 
\begin{document}

\title[Smooth Affine Shear Tight Frames]{Smooth Affine Shear Tight Frames with MRA Structure}

\author{Bin Han}
\address{(B.H.) Dept. of Math. \& Stat. Sci.,
University of Alberta, Edmonton, AB, Canada T6G 2G1.
 Phone: +1 7804924289, Fax:+1 7804926826}
\email{bhan@math.ualberta.ca}
\urladdr{http://www.ualberta.ca/$\sim$bhan}
\thanks{B.H.  was supported in part by the Natural Sciences and Engineering
Research Council of Canada (NSERC Canada) under Grant RGP 228051.}

\author{Xiaosheng Zhuang}
\address{(X.Z.) Dept. of Math., City University of Hong Kong, Tat Chee Avenue, Hong Kong.
Phone: +852 34425942, Fax:+852 34420250}
\email{xzhuang7@cityu.edu.hk}
\urladdr{
http://www6.cityu.edu.hk/ma/people/profile/zhuangxs.htm}


\begin{abstract}
Finding efficient representations is one of the most challenging and
heavily sought problems in mathematics. Representation using
shearlets recently receives a lot of attention due to their
desirable properties in both theory and applications. Using the
framework of frequency-based affine systems as developed in
\cite{Han:2012:FWS}, in this paper we introduce and systematically
study affine shear tight frames which include all known shearlet
tight frames as special cases. Our results in this paper will
resolve several key questions on shearlets. We provide a complete
characterization for an affine shear tight frame and then use it to
obtain smooth affine shear tight frames with all their generators in
the Schwarz class. Though multiresolution analysis (MRA) is the
foundation and key feature of wavelet analysis for fast numerical
implementation of a wavelet transform, all the known shearlets so
far do not possess any MRA structure and filter banks. In order to
study affine shear tight frames with MRA structure, following the
lines developed in \cite{Han:2012:FWS}, we introduce the notion of a
sequence of affine shear tight frames and then we provide a complete
characterization for it. Based on our characterizations, we present
two different approaches, i.e., non-stationary and quasi-stationary,
for the construction of sequences of affine shear tight frames with
MRA structure such that all their generators are smooth (in the
Schwarz class) and they have underlying filter banks. Consequently,
their associated transforms can be efficiently implemented using
filter banks similarly as a fast wavelet transform does.
\end{abstract}

\keywords{Frequency-based affine  systems, shearlets, cone-adapted,
smooth shearlets, affine shear systems, tight frames,
directionality, directional multiscale representation systems.}

\subjclass[2000]{42C40, 41A05, 42C15, 65T60}

\maketitle

\bigskip

\pagenumbering{arabic}


\section{Introduction and Motivation}
\label{sec:introduction} In the era of information, everyday and
everywhere, huge amount of information is acquired, processed,
stored, and transmitted in the form of high-dimensional digital data
through Internet, TVs, cell phones, satellites, and various other
modern communication technologies. One of the main goals in today's
scientific research is on  the efficient representation and
extraction of information in high-dimensional data. It is well known
that high-dimensional data usually exhibit anisotropic phenomena due
to data clustering of various types of structures. For example,
cosmological data normally consist of many morphological distinct
objects concentrated near lower-dimensional structures such as
points (stars), filaments, and sheets (nebulae). The anisotropic
features of high-dimensional data thus encode a large portion of
significant information. Mathematical representation systems that
are capable of capturing such anisotropic features are therefore
undoubtedly the key for the efficient representation of
high-dimensional data.

During the past decade, directional multiscale representation
systems become more and more popular due to their abilities of
resolving anisotropic features in high-dimensional data
\cite{DoVetterli, Elad.Starck:MCA, GuoLabate:2011, KingKutZhuang,
KutShahZhuang, DualTreeCWT}. Our focus of this paper is on
investigation and construction of a new type of directional
multiscale representation systems: \emph{affine shear tight frames}.
Such a type of directional multiscale representation systems has
many desirable properties including directionality,  multiresolution
analysis, smooth generators, etc..  Moreover, the affine shear
systems  have an underlying filter banks associated with the
directional affine (wavelet) systems as considered in
\cite{Han:2012:FWS}.

Before proceeding further,
let us first introduce some necessary notation and definitions.
For a function $f\in L_1(\R^d)$, the Fourier transform of $f$ is  defined to be
\[
\widehat f(\xi)  = \ft f(\xi) := \int_{\R^d} f(x) e^{-ix\cdot\xi}dx,\quad \xi\in\R^d,
\]
which can be extended to square-integrable functions in $L_2(\R^d)$ and tempered distributions.
Note that the Plancherel identity holds in $\dLp{2}$:
\[
\ip{f}{g} = \frac{1}{(2\pi)^d}\ip{\widehat f}{\widehat g},\quad f, g\in L_2(\R^d),
\]
where $\ip{f}{g}:=\int_{\R^d} f(x)\overline{g(x)}dx$. Let $U$ be a $d\times d$ real-valued  invertible matrix. Define $\|f\|^2_2:=\la f, f\ra$ and
\[
f_{U;\vk,\vn}(x):=|\det U|^{1/2} f(Ux-\vk)\cdot e^{-i\vn\cdot Ux}, \qquad\vk,\vn, x\in\R^d.
\]
Here $U$, $\vk$, and $\vn$ refer to dilation, translation, and
modulation, respectively. We shall adopt the convention that
$f_{U;\vk}:=f_{U;\vk,0}$ and  $f_{\vk,\vn}:=f_{I_d;\vk,\vn}$ with
$I_d$ being the $d\times d$ identity matrix. It is trivial to check
that $\|f_{U;\vk,\vn}\|_2 = \|f\|_2$ and
$\ip{f_{U;\vk,\vn}}{g_{U;\vk,\vn}}=\ip{f}{g}$.

Following the standard notation, we denote by $\dDC$ the linear
space of all compactly supported $C^\infty$ (test) functions with
the usual topology, and $\dDCpr$ the linear space of all
distributions; that is, $\dDCpr$ is the dual space of $\dDC$.
 $\dSC$ denotes the Schwarz class of  all rapidly decreasing functions on $\R^d$ and $\dSCpr$ is its dual space of tempered distributions on $\R^d$. Note that
an element $f\in\dSC$ has the property that its derivatives of any
orders belong to $C^\infty(\R^d)$ and have polynomial decay of
arbitrary orders. Moreover, the Fourier transform $\ft:
\dSCpr\rightarrow\dSCpr$ is well-defined, one-to-one, and onto. By
$\dlLp{p}, 1\le p<\infty$ we denote the linear space of all
(Lebesgue) measurable functions $f$ on $\dR$ such that
$\int_X|f(x)|^pdx<\infty$ for every compact set  $X\subseteq\R^d$.

Though all the discussion and results in this paper can be carried
over to any high dimensions $\R^d$ with $d\ge 2$, for simplicity of
presentation, we restrict ourselves to the two-dimensional case
only, which is the most important case in the area of directional
multiscale representations. We shall use the following matrices
throughout this paper.
\begin{equation}\label{def:basic_matrices}
\exchg := \left[
\begin{matrix}
0 & 1\\
1 & 0\\
\end{matrix}
\right],\quad \sh^\tau:= \left[
\begin{matrix}
1 & \tau\\
0 & 1\\
\end{matrix}
\right],\quad
\sh_\tau:=\left[
\begin{matrix}
1 & 0\\
\tau & 1\\
\end{matrix}
\right],\quad
\da_\lambda:=
\left[
\begin{matrix}
\lambda^2 & 0\\
0 & \lambda\\
\end{matrix}
\right],\quad \db_\lambda:=(\da_\lambda)^{-\tp}= \left[
\begin{matrix}
\lambda^{-2} & 0\\
0 & \lambda^{-1}\\
\end{matrix}
\right],
\end{equation}
where $\tau\in\R$ and $\lambda>1$. $\sh_\tau$ and $\sh^\tau$ are the
shear operations while $\da_\lambda$ is the dilation matrix. Define
$\N_0:=\N\cup\{0\}$ and $\delta:\Z^d\rightarrow\R$ denotes the Dirac
sequence such that  $\delta(0)=1$ and $\delta(\vk)=0$ for all
$\vk\in\Z^d\backslash\{0\}$. We shall use boldface letters to
indicate functions in the frequency domain; that is, $\fphi$ usually
means $\fphi=\ft \varphi$ for some function $\varphi\in L_2(\R^d)$.
A  frequency-based  affine shear system is obtained by applying
shear, dilation, and translation (modulation) to generators at
different scales. Note that $\ff(\sh_\tau\cdot)$ could be highly
tilted when $\tau$ is very large for a compactly supported function
$\ff$. To balance the shear operation, one usually considers
cone-adapted systems \cite{GKL06,GuoLabate:2007, GuoLabate:2012,
Lim:cptsht}. A cone-adapted system usually consists of three parts:
one subsystem covers the low frequency region, one subsystem covers
the horizontal cone $\{\xi\in\R^2: |\xi_2/\xi_1|\le 1\}$, and one
subsystem covers the vertical cone $\{\xi\in\R^2: |\xi_1/\xi_2|\le
1\}$. The vertical-cone subsystem could be constructed to be the
`flipped' version of the horizontal-cone subsystem. More precisely,
a function $\fphi\in L_2^{loc}(\R^2)$ will serve as the generator
for the low frequency region, a function $\fpsi\in L_2^{loc}(\R^2)$
will generate an affine system covering certain region of the
horizontal cone, and $\{\fpsi^{j,\ell}\in L_2^{loc}(\R^2):
|\ell|=r_j+1,\ldots,s_j\}$ are generators at scale $j$ that will
generate elements along the seamlines (diagonal directions
$\{\xi\in\R^2: |\xi_2/\xi_1|=\pm1\}$ ) to serve the purpose of
tightness of the system. Note that $\fpsi^{j,\ell},
|\ell|=r_j+1,\ldots,s_j$ may not come from a single generator.
Define $\fPsi_j$ to be
\begin{equation}\label{def:Psi_j}
\fPsi_j:=\{\fpsi(\sh_\ell\cdot): \ell=-r_j,\ldots,r_j\}\cup \{\fpsi^{j,\ell}(\sh_\ell\cdot): |\ell|=r_j+1,\ldots,s_j\},
\end{equation}
where $r_j$ and $s_j$ are nonnegative integers.
A \emph{ frequency-based  affine shear system} is then defined to be
\begin{equation}\label{def:FAS}
\FASS(\fphi;\{\fPsi_j\}_{j=0}^\infty)= \{\fphi_{0,\vk}: \vk\in\Z^2\}
\cup\{\fh_{\db_\lambda^j;0,\vk}: \vk\in\Z^2,
\fh\in\fPsi_j\}_{j=0}^\infty \cup\{\fh_{\db_\lambda^j\exchg;0,\vk}:
\vk\in\Z^2, \fh\in\fPsi_j\}_{j=0}^\infty.
\end{equation}
For a function $f$ on $\R^2$, observe that $f_{\exchg;0}(x,y) =
f(y,x)$; that is, $f_{\exchg;0}$ is the `flipped' version of $f$
along the line $y=x$. Note that the system
$\{\fh_{\db_\lambda^j;0,\vk}: \vk\in\Z^2, \fh\in\fPsi_j\}$ is for
the high frequency region at scale $j$ with respect to the
horizontal cone, while its `flipped' version
$\{\fh_{\db_\lambda^j\exchg;0,\vk}: \vk\in\Z^2, \fh\in\fPsi_j\}$ is
for the high frequency region at scale $j$ with respect to the
vertical cone.

Suppose $\fphi,\fpsi,\fpsi^{j,\ell}\in \nSCpr{2}$. Then there are tempered distributions $\varphi,\psi, \psi^{j,\ell}\in \nSCpr{2}$
such that $\ft \varphi =\fphi$, $\ft\psi = \fpsi$, and $\ft\psi^{j,\ell}=\fpsi^{j,\ell}$. Let
\[
\Psi_j:=\{\psi(\sh^\ell\cdot): \ell=-r_j,\ldots,r_j\}\cup
\{\psi^{j,\ell}(\sh^\ell\cdot): |\ell|=r_j+1,\ldots,s_j\}.
\]
Note that $\widehat{ f_{U;\vk,0}} = \widehat f_{U^{-\tp};0,\vk}$.
Then the system defined as in  \eqref{def:FAS} in the spatial domain
is equivalent to
\begin{equation}\label{def:tFAS}
\AS(\varphi;\{\Psi_j\}_{j=0}^\infty)=
\{\varphi(\cdot-{\vk}): \vk\in\Z^2\}
\cup\{h_{\da_\lambda^j;\vk}, h_{\da_\lambda^j\exchg;\vk} \setsp \vk\in\Z^2, h\in\Psi_j\}_{j=0}^\infty.
\end{equation}
In other words, $\FASS(\fphi;\{\fPsi_j\}_{j=0}^\infty)$ is the image
of $\AS(\varphi;\{\Psi_j\}_{j=0}^\infty)$ under the Fourier
transform. Though within the framework of tempered distributions the
frequency-based approach and the space-domain approach are
equivalent to each other, as argued in \cite{Han:2012:FWS} it is
often easier to deal with a frequency-based system. Therefore, for
simplicity of presentation, in the rest of this paper we shall
mainly discuss the frequency-based affine shear system
$\FASS(\fphi;\{\fPsi_j\}_{j=0}^\infty)$. It is  trivial to see that
a  frequency-based affine shear system
$\FASS(\fphi;\{\fPsi_j\}_{j=0}^\infty)$ defined in \eqref{def:FAS}
is a special case of  the nonhomogeneous affine (wavelet) systems
discussed in \cite[Section~3]{Han:2012:FWS}.

\subsection{Related work}
\label{subsec:relatedwork}

In 1D, it is well known that wavelet representation systems provide
optimally sparse representation for  functions $f\in L_2(\R)$ that
are smooth except for finitely many discontinuity `jumps'
\cite{Daub:book, Mallat:book}. In high dimensions, wavelet
representation systems could be obtained by using tensor product of
1D wavelet representation systems. However, tensor product
representation systems usually lack  directionality since they only
favor certain  directions such as horizontal and vertical
directions. The limitation of directionality selectivity is one of
the main reasons that  the tensor product wavelets fail to provide
optimally sparse approximation for 2D smooth functions with
singularities along a closed smooth curve (anisotropic features)
\cite{Donoho01}.
 In order to achieve flexible directionality selectivity, additional operation other than dilation and translation is needed.

Directional framelets \cite{Antoine, Han:97,  Han:2012:FWS},
directly built from the frequency plane,  achieve  directionality by
separating the frequency plane into annulus at different scales and
further splitting each annulus into different wedge shapes. More
precisely, in the frequency domain, considering the polar coordinate
$(r,\theta)$ (i.e., $(x,y)=(r\cos\theta, r\sin\theta)$), one first
constructs a pair $\{\feta(r),\fzeta(r)\}$ of 1D scaling and wavelet
functions such that
$|\feta|^2+\sum_{j\in\N_0}|\fzeta(2^{-j}\cdot)|^2=1$. Then, a 2D
scaling function $\fphi$ in the frequency domain is defined to be
$\fphi(r,\theta) := \feta(r)$, while the 2D radial wavelet function
$\fpsi$ is defined to be $\fpsi(r,\theta):=\fzeta(r)$. The function
$\fpsi(2^{-j}\cdot)$ is supported on an annulus $\{(r,\theta):
2^jc_1\le r\le 2^jc_2, \theta\in[0,2\pi)\}$. Obviously, $\fpsi$ is
an isotropic function. But directionality can be easily achieved by
splitting $\fpsi$ in the angular direction $\theta$ with a smooth
partition of unity $\alpha_{j,\ell}(\theta)$ for $[0,2\pi)$:
$\sum_{\ell=1}^ {s_j}|\alpha_{j,\ell}(\theta)|^2 = 1$,
$\theta\in[0,2\pi)$. Generators at scale $j$ is then given by
$\fpsi^{j,\ell}(r,\theta) = \fzeta(r)\alpha_{j,\ell}(\theta),
\ell=1,\ldots, s_j$. The directional framelet systems are then
obtained by applying isotropic dilation $\dn:=2^{-1}I_2$ and
translation to the generators, which result in wavelet atoms of the
form $\fpsi^{j,\ell}_{\dn^j;0,\vk}$ and the whole system is a tight
frame for $L_2(\R^2)$ with all its generators in the Schwarz class.

Although directional framelets can easily achieve directionality,
yet they still use the isotropic dilation matrices. The system is
thus too `dense' to provide optimally sparse approximation for 2D
$C^2$ functions with singularity along a close $C^2$ curve. By using
parabolic dilation $\da=\diag(2,\sqrt{2})$ instead of an isotropic
dilation, the curvelets introduced in  \cite{CD} not only can
achieve  directionality selectivity, but also provide optimally
sparse approximation for  2D $C^2$ functions away from a close $C^2$
curve; see \cite{CD, GuoLabate:2007, KutLemvigLim, KutLim} for more
details on the optimally sparse approximation. The curvelet atom is
of the form $\fpsi^{j,\ell}_{\da^{-j} R_{\theta_{j,\ell}};0,\vk}$
with $R_{\theta_{j,\ell}}$ being a rotation operation determined by
the angle $\theta_{j,\ell}$. In other words, each generator
$\fpsi^{j,\ell}$ is attached with a dilation matrix
$\dn_{j,\ell}:=\da^{-j} R_{\theta_{j,\ell}}$ that is determined by
both scaling and rotation.

The curvelets use parabolic scaling and rotation and can  achieve
both directionality and  optimally sparse approximation. However,
the rotation operation $R_\theta$ destroys the preservation of
integer lattice since $R_\theta\Z^2$ is not necessarily an integer
lattice, yet the integer lattice preservation is a very much desired
property in applications. Shearlets, introduced in \cite{GLLWW04,
GKL06, GuoLabate:2007}, replace rotation $R_\theta$ by shear
$\sh_\ell$. The shear operator not only preserves the integer
lattice $\sh_\ell\Z^2=\Z^2$, but also enables a shearlet system with
only a few generators; that is, $\fpsi^{j,\ell}$ could come from the
shear versions of  several generators (even one single generator in
the case of non-cone-adapted shearlets \cite{GLLWW06:2006}). Let
$\da_h:=\diag(4,2)$ and $\da_v:=\diag(2,4)$. A \emph{cone-adapted
shearlet system} in \cite{GKL06, GuoLabate:2007} is generated by
three generators $\varphi,\psi^h, \psi^v:=\psi^h(\exchg\cdot)$
through shear, parabolic scaling, and translation:
\begin{equation}\label{def:cone-adapted-shearlet}
\begin{aligned}
\CSH(\varphi;\{\psi^h,\psi^v\})=&\{\varphi(\cdot-\vk): \vk\in\Z^2\}
\\&\cup\{2^{3j/2}\psi^h(\sh^\ell \da_h^j\cdot-\vk): \ell=-2^j,\ldots,2^j, \vk\in\Z^2\}_{j=0}^\infty
\\&\cup\{2^{3j/2}\psi^v(\sh_\ell \da_v^j\cdot-\vk):  \ell=-2^j,\ldots,2^j, \vk\in\Z^2\}_{j=0}^\infty.
\end{aligned}
\end{equation}
It is obvious that the above shearlet system is indeed a special
case of the affine shear systems defined as in \eqref{def:tFAS} by
noting that
$2^{3j/2}\psi(\sh^\ell\da_h^j\cdot-\vk)=2^{3j/2}\psi(\sh^\ell(\da_h^j\cdot-\sh^{-\ell}\vk))=\widetilde\psi_{\da_h^j;\sh^{-\ell}\vk}$
with $\widetilde\psi:=\psi(\sh^\ell\cdot)$. The system defined above
as in \eqref{def:cone-adapted-shearlet} is in general not a tight
frame for $L_2(\R^2)$. In the case of bandlimited generators,  such
a system can be modified into a tight frame for $L_2(\R^2)$ by using
projection techniques \cite{GuoLabate:2007}, which cut the seamline
elements $\psi^h(\sh^\ell \da_h^j\cdot-\vk), \psi^v(\sh_\ell
\da_v^j\cdot-\vk)$ with $\ell=\pm 2^j$  into half pieces and then
restrict them strictly in  each cone. Such projection techniques
will result in non-smooth shearlets along the seamlines:
$\psi^{h,\pm}(\sh^{\pm 2^j} \da_h^j\cdot-\vk), \psi^{v,\pm}(\sh_{\pm
2^j} \da_v^j\cdot-\vk)$.

The non-smoothness of the seamline elements breaks down the
arguments in the proof of the optimally sparse approximation for 2D
$C^2$ functions with singularities along a close $C^2$  curve in
\cite{GuoLabate:2007}, in which at least twice differentiability is
needed for the shearlet atoms. Guo and Labate in
\cite{GuoLabate:2012} proposed another type of shearlet-like
construction. The idea is still the frequency splitting; but this
time for the rectangular strip from the Fourier transform $\fphi$ of
the Meyer 2D  tensor product  scaling function. The splitting is
applied to
$\fpsi^j:=\sqrt{|\fphi(2^{-2j-2}\cdot)|^2-|\fphi(2^{-2j}\cdot)|^2}$.
A gluing procedure is applied to the two pieces along the seamlines
coming from different cones. With appropriate construction, the
gluing procedure is smooth and the system in \cite{GuoLabate:2012}
consists of smooth shearlet-like atoms. However, due to the
inconsistency of two cones, a different dilation matrix is needed
for the glued shearlet-like atom. We shall discuss the connections
of such systems to our affine shear systems  in more details in
Section~\ref{sec:construction}.

Though there are several constructions of various shearlets
available in the literature
\cite{GKL06,GuoLabate:2007,GuoLabate:2012,Lim:cptsht}, several key
problems remain unresolved. In particular, the following three
issues:

\begin{itemize}
\item[\rm{Q1)}]Existence of smooth shearlets. The cone-adapted shearlet system is obtained by applying shear, parabolic scaling,
 and translation to a few generators. To achieve tightness of the system, the shearlet atoms along
the seamlines need to be cut into half pieces. One way to achieve
smoothness is by using the gluing procedure as in
\cite{GuoLabate:2012}. However, the system no longer has a full
shear structure and is not affine-like. Are there shear tight frames
using one or a few generators?

\item[\rm{Q2)}] Shearlets with MRA structure.  The cone-adapted
shearlets achieve directionality by using a parabolic dilation
$\pA_\lambda$ (in fact it essentially uses two parabolic dilations:
$\pA_\lambda=\diag(\lambda^2, \lambda)$ for horizontal cone, and
$\exchg\pA_\lambda \exchg=\diag(\lambda, \lambda^2)$ for the
vertical cone) and the shear matrix in \eqref{def:FAS} while try to
keep the generators $\fpsi$ at all scales to be the same. In
essence, directionality is achieved in a shear system by using
infinitely many dilation matrices so that the initial direction of
the generator $\fpsi$ is dilated and sheared to other directions.
This is the main difficulty to build a shear system having a
multiresolution structure where only a single dilation matrix is
employed. It is shown in \cite{Houska} that there is no tranditional
shearlet MRA $\{\V_j\}_{j\in\Z}$ with  scaling function $\varphi$
having nice decay property, where
$\V_j=\{\varphi_{\sh^\ell\da_\lambda^j;\vk}: \vk\in\Z^2, \ell\in
I_j\}$. In this case, the space $\V_j$ uses many (possibly
infinitely many) dilation matrices.  Are there MRA structures in
certain setting for a shearlet system?

\item[\rm{Q3)}]  Filter bank association. Once we have an MRA for a shear system, it is then natural to ask whether there also exists
an associated filter bank system for the shear system.
\cite{HanKutShen,KutSauer} have studied the filter bank system with
shear operation directly in the discrete setting and provide
characterization for such a shear filter bank system. However, it is
still not clear whether a filter bank system exists and can be
naturally induced from the constructed shear system.
\end{itemize}

\subsection{Our contributions}
\label{subsec:contribution} In this paper, since shear operation has
many nice properties in both theory (optimally sparse approximation,
rich group structures, etc., see \cite{KutLabate, KutLemvigLim}) and
applications (edge detection, inpainting, separation, etc., see
\cite{GuoLabate:2009, GuoLabate:2011, KingKutZhuang,
shearlets:book}), we shall focus on the construction of directional
multiscale representation systems with shear operation: affine shear
systems. Along the way, we will focus on the above issues as
discussed in Q1 -- Q3.

For  smoothness, we  show that by using one inner smooth generator
$\fpsi$ and only a few smooth boundary generators $\fpsi^{j,\ell}$
(8 boundary generators  in total for each scale $j$ and they are
actually generated by only 2 generators through shear and `flip' for
the non-stationary construction), we can indeed construct smooth
affine shear tight frames. In addition, in this paper, we study
sequences of affine shear systems. We show that a sequence of affine
shear tight frames naturally induces an MRA structure. We would like
to point out here that almost all existing approaches
\cite{GKL06,GuoLabate:2007,GuoLabate:2012} study only one shear
system, while it is of fundamental importance to investigate
sequences of shear systems as promoted in \cite{Han:2012:FWS}.

We propose two approaches for the construction of sequences of
smooth affine shear tight frames. One is non-stationary construction
and the other is quasi-stationary construction. The  function
$\fphi^j$ for the non-stationary construction is different at
different scale $j$, while the quasi-stationary construction is with
a fixed scaling function $\fphi^j\equiv\fphi$. These two approaches
actually share the similar  idea  of frequency splitting as that for
the construction of directional framelets: at scale $j$, a smooth 2D
wavelet function
$\fomega^j=(|\fphi^{j+1}(\lambda^{-2}\cdot)|^2-|\fphi^j|^2)^{1/2}$
is constructed in the frequency domain; then a smooth partition of
unity $\fgamma_{j,\ell}, \ell=1,\ldots, s_j$ for $\R^2$ such that
$\sum_{\ell=1}^{s_j}|\fgamma_{j,\ell}|^2 =1$ is created using shear
operations for two cones instead of rotation for the case of
directional framelets or curvelets; eventually, generators
$\fpsi^{j,\ell}$ at scale $j$ are obtained by applying
$\fgamma_{j,\ell}$ to $\fomega^j$.

By carefully designing the function $\fomega^j$, we show that we can
indeed generate a smooth affine  shear tight frame (or a sequence of
affine shear tight frames), which contains a subsystem (or a
sequence of subsystems) that is generated by only one generator. In
fact, for the non-stationary case, we will see that
$\fpsi^{j,\ell}=\fpsi$ for all $\ell$ except those $\ell$ with
respect to  seamline elements (8 in total and they can be generated
by only 2 elements). In other words, the shear operations in the
non-stationary construction can reach arbitrarily close to the
seamlines. For the quasi-stationary construction, we will see that
$\fpsi^{j,\ell}=\fpsi$ for a total number of $\ell$ that is
proportional to $\lambda^j$. In this case, the shear operators in
each cone are restricted inside an area with a fixed opening angle.
We shall discuss these two types of constructions in
Section~\ref{sec:construction} with more details.

The non-stationary construction and quasi-stationary construction
induce two types of MRA structure: non-stationary MRA and stationary
MRA. Both of these two types of MRAs are the traditional wavelet MRA
in the sense that the space $\V_j$ is generated by the function
($\fphi$ or $\fphi^j$) using a fixed dilation matrix $\dm =\lambda^2
I_2$. On the other hand, the space $\W_j$ is generated by $\fpsi$
and $\fpsi^{j,\ell}$ using many dilation matrices determined by
shears and parabolic scalings. We  show that such types of
constructions have a very close relation with the directional
framelets developed in \cite{Han:97, Han:2012:FWS}. By a simple
modification, we show that the construction of directional framelets
developed in \cite{Han:97, Han:2012:FWS} using tensor product on the
polar coordinate can be easily adapted to the setting of Cartesian
coordinate under the cone-adapted setting. For the directional
framelets, it is natural and easy to build a directional tight frame
with MRA structure and with an underlying filter bank. We show that
certain affine shear tight frames can be regarded as a subsystem of
certain directional framelets. Therefore, such affine shear tight
frames have an inherited MRA structure and filter banks from the
corresponding directional framelets. This observation implies that
the transform of such affine shear tight frames can be implemented
through the filter banks of their corresponding directional
framelets.

\subsection{Contents}
\label{subsec:contents} The structure of this paper is as follows.
In Section~\ref{sec:DFAS}, we shall provide a characterization of a
frequency-based affine shear system to be a tight frame in
$L_2(\R^2)$. Based on the characterization,  simple characterization
conditions can be obtained  for  frequency-based affine shear
systems with nonnegative generators. Then, we shall present a toy
example of  frequency-based affine shear tight frames with
characteristic function generators. In Section~\ref{sec:DFAS-J},
since sequences of frequency-based affine shear systems play a very
important role in our study of the MRA structure of affine shear
systems, we shall  characterize  a sequence of frequency-based
affine shear systems to be a sequence of affine shear  tight frames
for $L_2(\R^2)$. Correspondingly, simple characterization conditions
on sequences of frequency-based affine shear tight frames with
nonnegative generators shall be given. Based on the characterization
results, in Section~\ref{sec:construction}, we provide details for
the construction of smooth frequency-based affine shear tight
frames. Two approaches shall be introduced, one is the
non-stationary construction and the other is the quasi-stationary
construction. The connection of our construction of frequency-based
affine shear systems to other existing shear systems shall also be
addressed. In Section~\ref{sec:filterBank}, we shall investigate the
relation between our frequency-based affine shear systems and the
directional framelets in \cite{Han:97, Han:2012:FWS}. By modifying
the generators for directional framelets, we shall construct
cone-adapted directional framelets, with which a natural filter bank
 is associated. We shall show that for $\da_\lambda$ with
$\lambda>1$ being an integer,  a frequency-based affine shear tight
frame is in fact a subsystem of a cone-adapted directional framelet
and therefore a frequency-based affine shear system has also an
inherited filter bank. Some extension and discussion shall be given
in the last section.

\section{Frequency-based  Affine Shear Tight Frames}
\label{sec:DFAS}
Affine systems and their properties have been studied by many
researchers, e.g., see \cite{ChuiHeStockler,DaubHanRonShen,Han:97,
Han:2012:FWS,RonShen97}. In this section we characterize
frequency-based affine shear tight frames. Based on the
characterization, we show that very simple characterization
conditions could be obtained for frequency-based affine shear tight
frames with nonnegative generators. To prepare our study of smooth
affine shear tight frames in later sections,  we shall present a toy
example of frequency-based affine shear tight frame at the end of
this section.

For $\FASS(\fphi; \{\fPsi_j\}_{j=0}^\infty)$  given as in
\eqref{def:FAS} with $\fPsi_j$ being given as in \eqref{def:Psi_j},
we define the following functions:
\begin{equation}\label{def:IPhiIPsi}
\begin{aligned}
\cI_{\fphi}^{\vk}(\xi)
&:=\overline{\fphi(\xi)}{\fphi(\xi+2\pi\vk)},\quad\vk\in\Z^2,\quad\xi\in\R^2,
\\
\cI_{\fPsi_j}^{\vk}(\xi)
&:=\sum_{\ell=-s_j}^{s_j}\overline{\fpsi^{j,\ell}(\sh_\ell\xi)}{\fpsi^{j,\ell}(\sh_\ell(\xi+2\pi\vk))},
\quad\vk\in\Z^2, \quad\xi\in\R^2; \quad\fpsi^{j,\ell}:=\fpsi\mbox{
for } |\ell|\le r_j,
\\
\cI_{\fphi}^{\vk}(\xi)&\;=\cI_{\fPsi_j}^{\vk}(\xi):=0,
\quad\vk\in\R^2\backslash\Z^2, \quad\xi\in\R^2.
\end{aligned}
\end{equation}
We say that $\FASS(\fphi; \{\fPsi_j\}_{j=0}^\infty)$ is a \emph{frequency-based  affine shear tight frame}
for $L_2(\R^2)$ if all generators $\{\fphi\}\cup\{\fPsi_j\}_{j=0}^\infty\subseteq L_2(\R^2)$ and
\begin{equation}\label{def:tight}
\begin{aligned}
(2\pi)^2\|\ff\|_2^2=\sum_{\vk \in\Z^2} |\ip{\ff}{\fphi_{0,\vk}}|^2
&+\sum_{j=0}^\infty\sum_{\fh\in\fPsi_j}\sum_{\vk \in\Z^2}
(|\ip{\ff}{\fh_{\db_\lambda^{j};0,\vk}}|^2+|\ip{\ff}{\fh_{\db_\lambda^{j}\exchg;0,\vk}}|^2)\quad
\forall\ff\in L_2(\R^2).
\end{aligned}
\end{equation}
Using Plancherel identity, \eqref{def:tight} is equivalent to saying that $\AS(\varphi; \{\Psi_j\}_{j=0}^\infty)$ is a tight frame for $L_2(\R^2)$:
\begin{equation}\label{def:tight-t}
\begin{aligned}
\|f\|_2^2=\sum_{\vk \in\Z^2} |\ip{f}{\varphi(\cdot-\vk)}|^2
&+\sum_{j=0}^\infty\sum_{h\in\Psi_j}\sum_{\vk \in\Z^2}
(|\ip{f}{h_{\da_\lambda^{j};\vk}}|^2+|\ip{f}{h_{\da_\lambda^{j}\exchg;\vk}}|^2)\quad
\forall f\in L_2(\R^2),
\end{aligned}
\end{equation}
 where $\ft \varphi  =\fphi$ and $\{\ft h = \fh: h\in\Psi_j\}=\fPsi_j$ for $j\in\N_0$.

\subsection{Characterization of  frequency-based  affine shear tight frames}
We next  characterize the system in \eqref{def:FAS} to be a
frequency-based  affine shear tight  frame. We have the following
characterization.

\begin{theorem}\label{thm:main}
Let $\da_\lambda, \db_\lambda, \sh_\ell, \exchg$ be  defined as in
\eqref{def:basic_matrices} with $\lambda>1$ and let  $\FASS(\fphi;
\{\fPsi_j\}_{j=0}^\infty)$ be defined as in \eqref{def:FAS} with
$\{\fphi\}\cup\{\fPsi_j\}_{j=0}^\infty\subseteq L_2^{loc}(\R^2)$.
Define
$\Lambda:=\cup_{j=0}^{\infty}([\da_\lambda^{j}\Z^2]\cup[\exchg\da_\lambda^{j}\Z^2])$.
Then $\FASS(\fphi; \{\fPsi_j\}_{j=0}^\infty)$  is a  frequency-based
affine  shear tight frame for $L_2(\R^2)$ if and only if
\begin{align}\label{eq:k0}
\cI_{\fphi}^{\bf0}(\xi)+\sum_{j=0}^{\infty
}[\cI_{\fPsi_j}^{\bf0}(\db_\lambda^j\xi)+\cI_{\fPsi_j}^{\bf0}(\db_\lambda^j\exchg\xi)] = 1, \quad a.e.\; \xi\in\R^2
\end{align}
and
\begin{align}\label{eq:k}
\cI_{\fphi}^{\vk}(\xi)
+\sum_{j=0}^{\infty}
[
\cI_{\fPsi_j}^{\db_\lambda^j\vk}(\db_\lambda^j\xi)
+
\cI_{\fPsi_j}^{\db_\lambda^j\exchg\vk}(\db_\lambda^j\exchg\xi)
]
=0,\quad a.e. \;\xi\in\R^2,\vk\in\Lambda\backslash\{0\},
\end{align}
where the sum in \eqref{eq:k0} converges absolutely and the infinite
sum in \eqref{eq:k} is finite for almost every $\xi\in\R^2$.
\end{theorem}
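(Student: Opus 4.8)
The plan is to prove the characterization by a direct fiberization of the frame sum, reducing the tight frame identity \eqref{def:tight} to a pointwise (in $\xi$) family of scalar equations indexed by $\vk\in\Lambda$; this is exactly the specialization to the shear setting of the nonhomogeneous affine dual framelet characterization of \cite{Han:2012:FWS}, which is what makes the reduction legitimate. The engine is the elementary Parseval/periodization identity for a single modulation orbit: for $\fh\in L_2^{loc}(\R^2)$, an invertible $U$, and $\ff$ in the dense class of bounded compactly supported functions, the change of variables $\zeta=U\xi$ followed by $2\pi\Z^2$-periodization gives
\begin{equation*}
\sum_{\vk\in\Z^2}|\ip{\ff}{\fh_{U;0,\vk}}|^2=(2\pi)^2\sum_{\vn\in\Z^2}\int_{\R^2}\ff(\xi)\,\overline{\ff(\xi+2\pi U^{-1}\vn)}\,\overline{\fh(U\xi)}\,\fh(U\xi+2\pi\vn)\,d\xi.
\end{equation*}
First I would apply this to $\fh=\fphi$ with $U=I_2$, and to each $\fh=\fpsi^{j,\ell}(\sh_\ell\cdot)\in\fPsi_j$ with $U=\db_\lambda^j$ and with $U=\db_\lambda^j\exchg$.

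Second comes the bookkeeping. Summing over $\fh\in\fPsi_j$ (equivalently over $\ell=-s_j,\dots,s_j$) collapses $\sum_\ell\overline{\fpsi^{j,\ell}(\sh_\ell\eta)}\fpsi^{j,\ell}(\sh_\ell(\eta+2\pi\vn))$ into exactly $\cI_{\fPsi_j}^{\vn}(\eta)$, with $\eta=\db_\lambda^j\xi$ for the horizontal cone and $\eta=\db_\lambda^j\exchg\xi$ for the vertical cone, while the $\fphi$-block produces $\cI_{\fphi}^{\vn}(\xi)$. The crucial step is the re-indexing $\vk:=U^{-1}\vn$: for the horizontal cone $U^{-1}\vn=\da_\lambda^j\vn$ ranges over $\da_\lambda^j\Z^2$ (so $\vn=\db_\lambda^j\vk$), and for the vertical cone $U^{-1}\vn=\exchg\da_\lambda^j\vn$ ranges over $\exchg\da_\lambda^j\Z^2$ (so $\vn=\db_\lambda^j\exchg\vk$); hence the aggregate of all frequency shifts is precisely $\Lambda$. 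Using the convention that $\cI_{\fphi}^{\vk'}\equiv\cI_{\fPsi_j}^{\vk'}\equiv0$ for $\vk'\notin\Z^2$ to silently discard the scales $j$ with $\vk\notin\da_\lambda^j\Z^2$, all three blocks reorganize into a single sum over $\vk\in\Lambda$: the frame sum on the right-hand side of \eqref{def:tight} equals
\begin{equation*}
(2\pi)^2\sum_{\vk\in\Lambda}\int_{\R^2}\ff(\xi)\,\overline{\ff(\xi+2\pi\vk)}\,\Theta_\vk(\xi)\,d\xi,
\end{equation*}
where $\Theta_\vk(\xi):=\cI_{\fphi}^{\vk}(\xi)+\sum_{j=0}^\infty[\cI_{\fPsi_j}^{\db_\lambda^j\vk}(\db_\lambda^j\xi)+\cI_{\fPsi_j}^{\db_\lambda^j\exchg\vk}(\db_\lambda^j\exchg\xi)]$ is the left-hand side of \eqref{eq:k0} when $\vk=0$ and of \eqref{eq:k} when $\vk\neq0$.

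For sufficiency, \eqref{eq:k0}--\eqref{eq:k} assert $\Theta_0\equiv1$ and $\Theta_\vk\equiv0$ for $\vk\in\Lambda\setminus\{0\}$, so the master identity collapses to $\int_{\R^2}|\ff(\xi)|^2d\xi=\|\ff\|_2^2$, which is \eqref{def:tight}. For necessity I would exploit that the nonzero points of $\Lambda$ are bounded away from the origin: the shortest nonzero vector comes from $\da_\lambda^0\Z^2=\Z^2$ and has length $\ge1$, while every other $\da_\lambda^j\Z^2$ and $\exchg\da_\lambda^j\Z^2$ contributes vectors of length $\ge\lambda^j>1$. Thus testing the identity on $\ff$ supported in a ball of diameter $<2\pi$ annihilates every $\vk\neq0$ term and forces $\Theta_0=1$ a.e.; then, having fixed a $\vk\in\Lambda\setminus\{0\}$, testing on $\ff=\chi_E+c\,\chi_{E+2\pi\vk}$ for a small set $E$ and varying $c\in\{1,i\}$ isolates the $\pm\vk$ cross terms and extracts $\Theta_\vk=0$ a.e.\ by the usual polarization and Lebesgue-point argument.

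The main obstacle is analytic rather than algebraic: justifying the interchanges of summation and integration underlying the displayed identity, since a priori the triple series need not converge absolutely. I would handle this by first proving everything for $\ff$ bounded and compactly supported (for which each periodized fiber lies in $L_1([0,2\pi]^2)$ and Tonelli applies), establishing en route the absolute convergence in \eqref{eq:k0} and the a.e.\ finiteness of the sum in \eqref{eq:k} as consequences of the Bessel bound implied by \eqref{def:tight}, and only then passing to general $\ff\in L_2(\R^2)$ by density. The cleanest way to discharge these technicalities without reproving them is to match $\cI_{\fphi}$, $\cI_{\fPsi_j}$ and the lattice $\Lambda$ to the hypotheses of the general dual-framelet characterization in \cite{Han:2012:FWS} and invoke it directly; the two-paragraph computation above then serves to verify that the shear system is indeed the claimed special case.
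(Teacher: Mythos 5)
Your proposal is correct and follows essentially the same route as the paper: both rest on the periodization identity of \cite[Lemma~10]{Han:2012:FWS} to fiberize the frame sum over the discrete set $\Lambda$, use the compact support of the test function together with the discreteness of $\Lambda$ (and the finiteness of the contributing scales $j$ for each fixed $\vk\neq0$) to reduce to a finite sum plus the nonnegative $\vk=0$ part, and then obtain necessity by localizing test functions near $\xi_0$ and $\xi_0-2\pi\vk$ and polarizing. The only differences are cosmetic — you establish \eqref{eq:k0} before \eqref{eq:k} and package the polarization as $\ff=\chi_E+c\,\chi_{E+2\pi\vk}$ rather than the paper's bilinear form $S^J(\ff,\fg)$ with disjointly supported $\ff,\fg$ — neither of which changes the substance of the argument.
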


\begin{proof}
The proof essentially follows the lines developed in
\cite[Theorem~11]{Han:2012:FWS}.  For $\ff\in\nDC{2}$, define
\begin{equation}\label{def:S(f)}
\begin{aligned}
S^J(\ff):=
\sum_{\vk \in\Z^2}
|\ip{\ff}{\fphi_{0,\vk}}|^2
&+\sum_{j=0}^J\sum_{\ell=-r_j}^{r_j}\sum_{\vk \in\Z^2}
(|\ip{\ff}{\fpsi_{\sh_\ell\db_\lambda^j;0,\vk}}|^2+|\ip{\ff}{\fpsi_{\sh_\ell\db_\lambda^j\exchg;0,\vk}}|^2)\\
&
+\sum_{j=0}^J\sum_{|\ell|=r_j+1}^{s_j}\sum_{\vk \in\Z^2}
(|\ip{\ff}{\fpsi^{j,\ell}_{\sh_\ell\db_\lambda^j;0,\vk}}|^2+|\ip{\ff}{\fpsi^{j,\ell}_{\sh_\ell\db_\lambda^j\exchg;0,\vk}}|^2).
\end{aligned}
\end{equation}
 Using \cite[Lemma~10]{Han:2012:FWS}, we have
\begin{equation}\label{def:S(f)Short}
S^J(\ff) =
(2\pi)^2\int_{\R^2}\sum_{\vk\in\Lambda}\ff(\xi)\overline{\ff(\xi+2\pi\vk)}
\Big(\cI_\fphi^\vk(\xi)
+\sum_{j=0}^J\Big[\cI_{\fPsi_j}^{\db_\lambda^j\vk}(\db_\lambda^j\xi)
+\cI_{\fPsi_j}^{\db_\lambda^j\exchg\vk}(\db_\lambda^j\exchg\xi)\Big]\Big)d\xi.
\end{equation}
Since $\lambda>1$, we have
 $B_r(0)\cap \Lambda$ is finite for any ball $B_r(0)$ with radius $r>0$. Hence, $\Lambda$ has no accumulation point. Moreover, we have
\begin{align}\label{cond:LambdaFinte}
\{j\in \Z \mid j\ge 0, \;\db_\lambda^j \vk\in\Z^2 \mbox{ or }
\db_\lambda^{j}\exchg\vk \in\Z^2 \} \mbox{ is a fnite set for every
} \vk\in\Lambda\backslash\{0\}.
\end{align}
In fact, let $\vk\in\Lambda\backslash\{0\}$. Then there exist
$j_0\in\Z$ and $\vk_0\in\Z^2$ such that $\vk = \da_\lambda^{j_0}
\vk_0$ or $\vk = \exchg \da_\lambda^{j_0} \vk_0$. Then , for
$\vk=\da_\lambda^{j_0}\vk_0$, $\db_\lambda^j\vk = \da^{j_0-j}\vk_0$
or $\db_\lambda^j\exchg\vk =
(\db_\lambda^j\exchg\da_\lambda^{j_0})\vk_0$. Since $\da_\lambda$ is
expansive and $j\ge0$, there are only finitely many $j$ such that
$\da_\lambda^{j_0-j}\vk_0$ or
$(\db_\lambda^j\exchg\da_\lambda^{j_0})\vk_0$ is in $\Z^2$. The same
is true for $\vk=\exchg\da_\lambda^{j_0}\vk_0$. Consequently,
\eqref{cond:LambdaFinte} holds.

On the one hand, since $\ff$ is compactly supported, there exists a
constant $c>0$ such that $\ff(\xi)\overline{\ff(\xi+2\pi\vk)}=0$ for
all $\xi\in\R^2$ and $|\vk|\ge c$.  On the other hand, $\Lambda\cap
B_c(0)$ is a finite set. Therefore, by \eqref{cond:LambdaFinte},
there exists a positive integer $J'$ such that
$\db_\lambda^j\vk\notin\Z^2$ and $\db_\lambda^j\exchg\vk\notin\Z^2$
for all $j\ge J'$ and $\vk\in\Lambda\backslash\{0\}$ with $|\vk|<c$.
Then for all $J>J'$, \eqref{def:S(f)Short} becomes
\begin{equation}\label{def:S(f)Short2}
\begin{aligned}
S^J(\ff) &= (2\pi)^2\int_{\R^2}|\ff(\xi)|^2 \Big(\cI_\fphi^0(\xi)
+\sum_{j=0}^J\Big[\cI_{\fPsi_j}^{0}(\db_\lambda^j\xi)
+\cI_{\fPsi_j}^{0}(\db_\lambda^j\exchg\xi)\Big]\Big)d\xi
\\&+(2\pi)^2\int_{\R^2}\sum_{\vk\in\Lambda\backslash\{0\}}\ff(\xi)\overline{\ff(\xi+2\pi\vk)}
\Big(\cI_\fphi^\vk(\xi)
+\sum_{j=0}^\infty\Big[\cI_{\fPsi_j}^{\db_\lambda^j\vk}(\db_\lambda^j\xi)
+\cI_{\fPsi_j}^{\db_\lambda^j\exchg\vk}(\db_\lambda^j\exchg\xi)\Big]\Big)d\xi.
\end{aligned}
\end{equation}
Since \eqref{eq:k} holds, the above equation \eqref{def:S(f)Short2} becomes
\[
S^J(\ff) = (2\pi)^2\int_{\R^2}|\ff(\xi)|^2 \Big(\cI_\fphi^0(\xi)
+\sum_{j=0}^J\Big[\cI_{\fPsi_j}^{0}(\db_\lambda^j\xi)
+\cI_{\fPsi_j}^{0}(\db_\lambda^j\exchg\xi)\Big]\Big)d\xi,\quad J>J'.
\]
Note that all $\cI_\fphi^0$ and $\cI_{\fPsi_j}^{0}$ are nonnegative functions. By
\eqref{eq:k0} and the Lebesgue Dominated Convergence Theorem, we have
\begin{equation}\label{def:S(f)short3}
\lim_{J\rightarrow\infty}S^J(\ff)= (2\pi)^2\int_{\R^2}|\ff(\xi)|^2
\lim_{J\to \infty} \Big(\cI_\fphi^0(\xi)
+\sum_{j=0}^J\Big[\cI_{\fPsi_j}^{0}(\db_\lambda^j\xi)
+\cI_{\fPsi_j}^{0}(\db_\lambda^j\exchg\xi)\Big]\Big)d\xi=
(2\pi)^2\|\ff\|_2^2, \quad \ff\in\nDC{2}.
\end{equation}
Therefore, by that $\nDC{2}$ is dense in $L_2(\R^2)$, we conclude
that $\{\fphi\}\cup\{\fPsi_j\}_{j=0}^\infty \subseteq L_2(\R^2)$ and
\eqref{def:FAS} is a  frequency-based  affine shear tight frame for
$L_2(\R^2)$.

Conversely, suppose  that \eqref{def:FAS} is a  frequency-based
affine shear tight frame for $L_2(\R^2)$.
 Consider
\begin{equation}\label{def:S(fg)}
\begin{aligned}
S^J(\ff,\fg):= \sum_{\vk \in\Z^2}
\ip{\ff}{\fphi_{0,\vk}}\ip{\fphi_{0,\vk}}{\fg}
&+\sum_{j=0}^J\sum_{\fh\in\fPsi_j}
(\ip{\ff}{\fh_{\db_\lambda^j;0,\vk}}\ip{\fh_{\db_\lambda^j;0,\vk}}{\fg}
+\ip{\ff}{\fh_{\db_\lambda^j\exchg;0,\vk}}\ip{\fh_{\db_\lambda^j\exchg;0,\vk}}{\fg}).
\end{aligned}
\end{equation}
Then by the polarization identity, that \eqref{def:FAS} is tight is
equivalent to that
\[
\lim_{J\rightarrow\infty}S^J(\ff,\fg)=(2\pi)^2\ip{\ff}{\fg}\quad
\forall \ff,\fg\in\nDC{2}.
\]
Similarly, by \cite[Lemma 10]{Han:2012:FWS}, we have
\begin{equation}\label{def:S(fg)Short2}
\begin{aligned}
S^J(\ff,\fg) &= (2\pi)^2\int_{\R^2}\ff(\xi)\overline{\fg(\xi)}
\Big(\cI_\fphi^0(\xi)
+\sum_{j=0}^J\Big[\cI_{\fPsi_j}^{0}(\db_\lambda^j\xi)
+\cI_{\fPsi_j}^{0}(\db_\lambda^j\exchg\xi)\Big]\Big)d\xi
\\&+(2\pi)^2\int_{\R^2}\sum_{\vk\in\Lambda\backslash\{0\}}\ff(\xi)\overline{\fg(\xi+2\pi\vk)}
\Big(\cI_\fphi^\vk(\xi)
+\sum_{j=0}^\infty\Big[\cI_{\fPsi_j}^{\db_\lambda^j\vk}(\db_\lambda^j\xi)
+\cI_{\fPsi_j}^{\db_\lambda^j\exchg\vk}(\db_\lambda^j\exchg\xi)\Big]\Big)d\xi.
\end{aligned}
\end{equation}
Note that the set $\Lambda$ is discrete and closed; that is, for any
$\vk\in\Lambda$,
$\varepsilon_\vk:=2\pi\cdot\inf_{y\in\Lambda}\|y-\vk\|_2/2>0$. For
any $\vk\in\Lambda\backslash\{0\}$ and $\xi_0\in\R^2$, considering
$\ff,\fg\in\nDC{2}$ such that $\supp\ff\subseteq
B_{\varepsilon_\vk}(\xi_0)$ and $\supp\fg\subseteq
B_{\varepsilon_\vk}(\xi_0-2\pi\vk)$, then we have
\begin{equation}\label{def:S(f)short4}
\begin{aligned}
(2\pi)^2\int_{\R^2}\ff(\xi)\overline{\fg(\xi+2\pi\vk)}
\Big(\cI_\fphi^\vk(\xi)
+\sum_{j=0}^\infty[\cI_{\fPsi_j}^{\db_\lambda^j\vk}(\db_\lambda^j\xi)
+\cI_{\fPsi_j}^{\db_\lambda^j\exchg\vk}(\db_\lambda^j\exchg\xi)]\Big)d\xi
=\lim_{J\rightarrow\infty} S^J(\ff,\fg)= (2\pi)^2\ip{\ff}{\fg}=0.
\end{aligned}
\end{equation}
Now from the above relation as in \eqref{def:S(f)short4} we deduce
that \eqref{eq:k} holds for almost  every $\xi\in
B_{\varepsilon_\vk}(\xi_0)$. Since $\xi_0$ is arbitrary,  we see
that \eqref{eq:k} must hold for almost every $\xi\in\R^2$.
Consequently, we conclude from \eqref{def:S(f)Short2} that
\be \label{to1}
\lim_{J\to \infty} (2\pi)^2 \int_{\R^2}|\ff(\xi)|^2
 \Big(\cI_\fphi^0(\xi)
+\sum_{j=0}^J\Big[\cI_{\fPsi_j}^{0}(\db_\lambda^j\xi)
+\cI_{\fPsi_j}^{0}(\db_\lambda^j\exchg\xi)\Big]\Big)d\xi=\lim_{J\to
\infty} S^J(\ff)= (2\pi)^2\|\ff\|_2^2
\ee
for all $\ff\in\nDC{2}$. Since all $\cI_\fphi^0$ and
$\cI_{\fPsi_j}^{0}$ are nonnegative functions, by Monotone
Convergence Theorem, it follows from \eqref{to1} that
\[
(2\pi)^2 \int_{\R^2}|\ff(\xi)|^2 \Big(\cI_\fphi^0(\xi)
+\sum_{j=0}^\infty \Big[\cI_{\fPsi_j}^{0}(\db_\lambda^j\xi)
+\cI_{\fPsi_j}^{0}(\db_\lambda^j\exchg\xi)\Big]\Big)d\xi=
(2\pi)^2\|\ff\|_2^2
\]
for all  $\ff\in\nDC{2}$, from which we deduce that \eqref{eq:k0}
must hold.
\end{proof}
When all generators $\fphi, \fpsi, \fpsi^{j,\ell}$ are nonnegative,
the characterization in  Theorem~\ref{thm:main} is simplified as follows.
\begin{corollary}\label{cor:main}
Let $\da_\lambda, \db_\lambda, \sh_\ell, \exchg$ be defined as in
\eqref{def:basic_matrices} with $\lambda>1$ and let $\FASS(\fphi;
\{\fPsi_j\}_{j=0}^\infty)$ be defined as in \eqref{def:FAS} with
$\{\fphi\}\cup\{\fPsi_j\}_{j=0}^\infty\subseteq L_2^{loc}(\R^2)$.
Suppose
\begin{equation}\label{eq:nonnegativeGenerators}
\fh(\xi) \ge0, \quad a.e.\, \xi\in \R^2,\; \forall \fh\in\{\fphi\}\cup\{\fPsi_j\}_{j=0}^\infty.
\end{equation}
 Then
$\FASS(\fphi; \{\fPsi_j\}_{j=0}^\infty)$  is
a  frequency-based   affine  shear tight frame for $L_2(\R^2)$  if and only if
\begin{equation}\label{cond:PartionOfUnity}
|\fphi(\xi)|^2+
\sum_{j=0}^{\infty}\left(\sum_{\ell=-r_j}^{r_j}\left[|\fpsi(\sh_\ell\db_\lambda^{j}\xi)|^2
+|\fpsi(\sh_\ell\db_\lambda^{j}\exchg\xi)|^2\right]
+\sum_{|\ell|=r_j+1}^{s_j}\left[|\fpsi^{j,\ell}(\sh_\ell\db_\lambda^{j}\xi)|^2
+|\fpsi^{j,\ell}(\sh_\ell\db_\lambda^{j}\exchg\xi)|^2\right]\right)
 = 1
\end{equation}
for $a.e.\; \xi\in\R^2$ and
\begin{equation}\label{cond:non-overlap}
\fh(\xi)\fh(\xi+2\pi \vk) = 0, \; a.e.\;\xi\in\R^2, \;
\forall\vk\in\Z^2\backslash\{0\},\mbox{ and } \forall
\fh\in\{\fphi\}\cup\{\fPsi_j\}_{j=0}^\infty.
\end{equation}
\end{corollary}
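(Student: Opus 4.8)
The plan is to deduce Corollary~\ref{cor:main} directly from Theorem~\ref{thm:main} by exploiting the hypothesis \eqref{eq:nonnegativeGenerators}. The first, essentially bookkeeping, step is to record what nonnegativity buys us at the zero lattice point. Since every generator is real and nonnegative, all the conjugations in \eqref{def:IPhiIPsi} disappear, so $\cI_{\fphi}^{\mathbf 0}(\xi)=|\fphi(\xi)|^2$ and $\cI_{\fPsi_j}^{\mathbf 0}(\xi)=\sum_{\ell=-s_j}^{s_j}|\fpsi^{j,\ell}(\sh_\ell\xi)|^2$. Substituting $\db_\lambda^j\xi$ and $\db_\lambda^j\exchg\xi$, summing over $j$, and splitting each inner sum according to $|\ell|\le r_j$ (where $\fpsi^{j,\ell}=\fpsi$) and $r_j<|\ell|\le s_j$, I see that \eqref{eq:k0} is literally \eqref{cond:PartionOfUnity}. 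Hence that half of the claimed equivalence is free, and the whole content of the corollary is the equivalence, under the partition of unity, of the off-diagonal condition \eqref{eq:k} with the non-overlap condition \eqref{cond:non-overlap}.

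The pivotal structural observation is that nonnegativity turns \eqref{eq:k} into a sum of nonnegative terms. Indeed $\cI_{\fphi}^{\vk}(\xi)=\fphi(\xi)\fphi(\xi+2\pi\vk)\ge 0$, and each summand of $\cI_{\fPsi_j}^{\vk}(\xi)$ is a product $\fpsi^{j,\ell}(\sh_\ell\xi)\fpsi^{j,\ell}(\sh_\ell(\xi+2\pi\vk))$ of two nonnegative numbers, so $\cI_{\fPsi_j}^{\vk}\ge 0$ as well. Consequently, for each fixed $\vk\in\Lambda\setminus\{0\}$, the left-hand side of \eqref{eq:k} is a sum of nonnegative measurable functions.

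For the direction that the frame property implies \eqref{cond:non-overlap}, I would apply Theorem~\ref{thm:main} to obtain \eqref{eq:k} and then use that a nonnegative sum that vanishes a.e.\ forces every summand to vanish a.e. To reach an arbitrary generator I choose $\vk\in\Lambda$ suitably: for $\fphi$ I take $\vk_0\in\Z^2\setminus\{0\}$, which lies in $\Lambda$ via the $j=0$ piece $\da_\lambda^0\Z^2=\Z^2$, and read off that the $\cI_{\fphi}^{\vk_0}$ term is zero, i.e.\ $\fphi(\xi)\fphi(\xi+2\pi\vk_0)=0$ a.e. For a generator $\fh=\fpsi^{j_0,\ell}(\sh_\ell\cdot)$ and a shift $\vk_0\in\Z^2\setminus\{0\}$, I set $\vk:=\da_\lambda^{j_0}\vk_0\in\da_\lambda^{j_0}\Z^2\subseteq\Lambda\setminus\{0\}$; since $\db_\lambda=\da_\lambda^{-1}$ gives $\db_\lambda^{j_0}\vk=\vk_0$, the $j=j_0$ unflipped term of \eqref{eq:k} is $\cI_{\fPsi_{j_0}}^{\vk_0}(\db_\lambda^{j_0}\xi)$, which must vanish a.e.; as $\db_\lambda^{j_0}$ is a fixed invertible linear change of variables (preserving null sets), $\cI_{\fPsi_{j_0}}^{\vk_0}\equiv 0$ a.e. Splitting this nonnegative $\ell$-sum term by term then yields $\fpsi^{j_0,\ell}(\sh_\ell\eta)\fpsi^{j_0,\ell}(\sh_\ell(\eta+2\pi\vk_0))=0$ a.e., which is exactly $\fh(\eta)\fh(\eta+2\pi\vk_0)=0$ for $\fh=\fpsi^{j_0,\ell}(\sh_\ell\cdot)$; letting $j_0,\ell,\vk_0$ range and unioning the countably many exceptional null sets delivers \eqref{cond:non-overlap}. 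This extraction is where I expect the only real subtlety to lie: it rests essentially on nonnegativity (so that no cancellation can occur), on the fact that $\vk\mapsto\db_\lambda^{j_0}\vk$ carries $\Lambda$ onto all of $\Z^2$ at level $j_0$, and on the harmlessness of the linear change of variables.

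Conversely, assuming \eqref{cond:PartionOfUnity} and \eqref{cond:non-overlap}, I would verify the hypotheses of Theorem~\ref{thm:main}. Condition \eqref{eq:k0} holds because it coincides with \eqref{cond:PartionOfUnity}. For \eqref{eq:k}, the non-overlap condition makes each summand of $\cI_{\fPsi_j}^{\vk_0}$ as well as $\cI_{\fphi}^{\vk_0}$ vanish for every nonzero $\vk_0\in\Z^2$, so $\cI_{\fPsi_j}^{\vk_0}\equiv 0$ and $\cI_{\fphi}^{\vk_0}\equiv 0$; together with the convention in the last line of \eqref{def:IPhiIPsi} that these quantities are $0$ whenever the index lies outside $\Z^2$, and since $\db_\lambda^j\vk$ and $\db_\lambda^j\exchg\vk$ are nonzero for $\vk\ne 0$, every (flipped or unflipped) term of \eqref{eq:k} vanishes for each $\vk\in\Lambda\setminus\{0\}$, so \eqref{eq:k} holds and its sum is trivially finite. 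The absolute-convergence proviso for \eqref{eq:k0} is immediate, being a sum of nonnegative terms equal to $1$. Theorem~\ref{thm:main} then gives that $\FASS(\fphi;\{\fPsi_j\}_{j=0}^\infty)$ is a frequency-based affine shear tight frame, completing the equivalence.
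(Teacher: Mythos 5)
Your proposal is correct and follows essentially the same route as the paper: the paper's own proof is a two-sentence version of exactly this reduction to Theorem~\ref{thm:main}, noting that \eqref{eq:k0} coincides with \eqref{cond:PartionOfUnity} and that, by nonnegativity, the sum in \eqref{eq:k} vanishes if and only if every term does, which is \eqref{cond:non-overlap}. Your additional care in choosing $\vk=\da_\lambda^{j_0}\vk_0$ to isolate each generator and in checking the finiteness/absolute-convergence provisos simply makes explicit what the paper leaves implicit.
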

\begin{proof}
Obviously,   \eqref{eq:k0} is
equivalent to \eqref{cond:PartionOfUnity}. When all generators are nonnegative,
 \eqref{eq:k} is equivalent to $\fphi(\xi)\fphi(\xi+2\pi \vk)=0$,
 $\fpsi(\xi)\fpsi(\xi+2\pi \vk) = 0$, and $\fpsi^{j,\ell}(\xi)\fpsi^{j,\ell}(\xi+2\pi\vk)=0$
 for almost every $\xi\in\R^2$, $|\ell|=r_j+1,\ldots,s_j$,  and $\vk\in\Z^2\backslash\{0\}$, which is  \eqref{cond:non-overlap}.
\end{proof}

By Corollary~\ref{cor:main}, we see that when all generators are
nonnegative, condition \eqref{cond:PartionOfUnity} is essentially
saying that a partition of unity on the frequency plane is required
for the system $\FASS(\fphi; \{\fPsi_j\}_{j=0}^\infty)$ to be a
tight frame for $L_2(\R^2)$. Condition \eqref{cond:non-overlap} says
that each generator should not overlap with its $2\pi$-shifted
version. In summary, the characterization in Theorem~\ref{thm:main}
is simplified to a partition of unity condition and a
non-overlapping condition.

\subsection{A toy example of  frequency-based  affine shear tight  frames}
\label{subsec:toyExample} To prepare for our study of smooth affine
shear tight frames in later sections,  we next give a simple example
of frequency-based affine shear tight frames generated by
characteristic functions.

Let $\lambda>1$ and define
$\ell_{\lambda^j}:=\lfloor\lambda^j-1/2\rfloor+1$. Choose $\rho>0$
such that either $0<\rho\le1$ for any $\lambda>1$, or
$\lambda^2\le\rho\le 2$ for $1<\lambda\le \sqrt{2}$. Let
\[
\begin{aligned}
E&:=\{\xi\in \R^2: -1/2\le \xi_2/\xi_1 \le 1/2, |\xi_1|\in (\lambda^{-2}\rho\pi,\rho\pi]\},\\
E_{j,+}&:=\{\xi\in \R^2: -1/2\le \xi_2/\xi_1 \le \lambda^j-\ell_{\lambda^j} , |\xi_1|\in (\lambda^{-2}\rho\pi,\rho\pi]\},\\
E_{j,-}&:=\{\xi\in \R^2: -\lambda^j+\ell_{\lambda^j}\le \xi_2/\xi_1 \le 1/2, |\xi_1|\in (\lambda^{-2}\rho\pi,\rho\pi]\}.
\end{aligned}
\]
Define
\begin{equation}\label{def:shannonPhiPsi}
\fphi:=\chi_{[-\lambda^{-2}\rho\pi,\lambda^{-2}\rho\pi]^2},\quad \fpsi:=\chi_{E}, \quad \fpsi^{j,-\ell_{\lambda^j}}:=\chi_{E_{j,+}}, \quad \fpsi^{j,\ell_{\lambda^j}}:=\chi_{E_{j,-}}.
\end{equation}
Let
\begin{equation}\label{def:shannonPsij}
\fPsi_j:=\{\fpsi(\sh_\ell\cdot):
\ell=-\ell_{\lambda^j}+1,\ldots,\ell_{\lambda^j}-1\}\cup\{\fpsi^{j,\ell_{\lambda^j}}(\sh_{\ell_{\lambda^j}}\cdot),\fpsi^{j,-\ell_{\lambda^j}}(\sh_{{-\ell_{\lambda^j}}}\cdot)\}.
\end{equation}

\begin{figure}[h]
\includegraphics[width=2.4in]{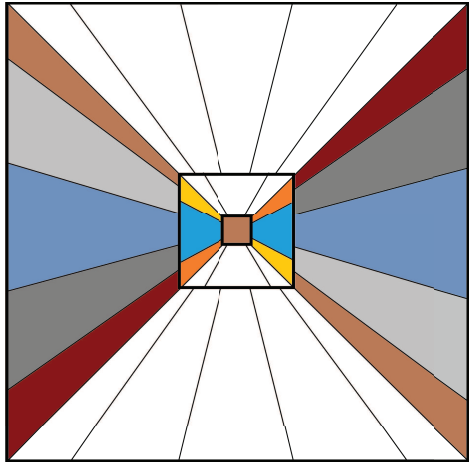}
\caption{Frequency tilings of $\FAS(\fphi;
\{\fPsi_j\}_{j=0}^\infty)$ generated by characteristic functions.
Inner rectangle:  $\fphi$.
 Middle rectangle: $\fpsi, \fpsi^{0,-1}(\sh_{-1}\cdot),
 \fpsi^{0,+1}(\sh_1\cdot)$ (color parts) and their flipped versions.
Outer rectangle: $\fpsi(\sh_\ell\db_2\cdot), \ell=-1,0,1,
\fpsi^{1,-2}(\sh_{-2}\db_2\cdot),\fpsi^{1,+2}(\sh_2\db_2\cdot)$
(color parts) and their flipped versions.}
\label{fig:shearletTilings}
\end{figure}By Corollary~\ref{cor:main}, we have the following result.
\begin{corollary}\label{cor:ShannonShealets}
Let $\da_\lambda, \db_\lambda, \sh_\ell, \exchg$ be defined as in
\eqref{def:basic_matrices} with $\lambda>1$ and let $\FASS(\fphi;
\{\fPsi_j\}_{j=0}^\infty)$ be defined as in \eqref{def:FAS}  with
$\fphi$ and $\fPsi_j$ being given as in \eqref{def:shannonPhiPsi}
and \eqref{def:shannonPsij}. Then
$\FASS(\fphi;\{\fPsi_j\}_{j=0}^\infty)$ is a  frequency-based affine
shear tight frame for $L_2(\R^2)$.
\end{corollary}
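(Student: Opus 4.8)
The plan is to read off the result from Corollary~\ref{cor:main}. Since every generator in \eqref{def:shannonPhiPsi}--\eqref{def:shannonPsij} is a characteristic function, the nonnegativity hypothesis \eqref{eq:nonnegativeGenerators} holds automatically, so $\FASS(\fphi;\{\fPsi_j\}_{j=0}^\infty)$ is a frequency-based affine shear tight frame precisely when the partition-of-unity condition \eqref{cond:PartionOfUnity} and the non-overlapping (packing) condition \eqref{cond:non-overlap} both hold. For characteristic functions these reduce to purely geometric statements about the supports, and I would verify the two conditions separately, treating \eqref{cond:PartionOfUnity} as a tiling statement and \eqref{cond:non-overlap} as a packing statement with respect to $2\pi\Z^2$.

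For \eqref{cond:PartionOfUnity} I would show that the supports of the sheared, dilated generators partition $\R^2$ up to a null set, so that the sum of the squared characteristic functions equals $1$ a.e. There are three ingredients. First, at a fixed scale $j$ the map $\sh_\ell\db_\lambda^j$ carries $E$ onto the slope-slab $\{\xi : \xi_2/\xi_1\in[(-\ell-\tfrac12)\lambda^{-j},(-\ell+\tfrac12)\lambda^{-j}]\}$; as $\ell$ runs over $-\ell_{\lambda^j}+1,\dots,\ell_{\lambda^j}-1$ these consecutive slabs tile the wedge $|\xi_2/\xi_1|\le(\ell_{\lambda^j}-\tfrac12)\lambda^{-j}$, and the choice $\ell_{\lambda^j}=\lfloor\lambda^j-1/2\rfloor+1$ is exactly what makes the two boundary pieces $E_{j,\pm}$ fill the remaining wedges up to the seamlines $|\xi_2/\xi_1|=1$ with neither gap nor overlap. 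Second, the first-coordinate condition $|\xi_1|\in(\lambda^{-2}\rho\pi,\rho\pi]$ shows that the horizontal-cone pieces at scale $j$ occupy the annular strip $|\xi_1|\in(\lambda^{2j-2}\rho\pi,\lambda^{2j}\rho\pi]$, and these strips tile $\{|\xi_1|>\lambda^{-2}\rho\pi\}$. Third, combining these with the flipped vertical-cone pieces and the central square $\supp\fphi=[-\lambda^{-2}\rho\pi,\lambda^{-2}\rho\pi]^2$, I would check that $\R^2$ splits into the central square, the horizontal cone $\{|\xi_2|\le|\xi_1|,\,|\xi_1|>\lambda^{-2}\rho\pi\}$, and the vertical cone $\{|\xi_1|\le|\xi_2|,\,|\xi_2|>\lambda^{-2}\rho\pi\}$, three regions meeting only along the seamlines, which is a null set. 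This gives \eqref{cond:PartionOfUnity}.

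For \eqref{cond:non-overlap} I would verify that the support of each generator packs under $2\pi\Z^2$, i.e. that its nonzero $2\pi\Z^2$-translates are essentially disjoint from it. This is where the hypotheses on $\rho$ and $\lambda$ enter. Since the $\xi_1$-extent of $E$ lies in $\lambda^{-2}\rho\pi<|\xi_1|\le\rho\pi$ and its $\xi_2$-extent in $|\xi_2|\le\rho\pi/2$, I would determine exactly when a nonzero translate of $E$ can re-enter $E$: the binding case is the horizontal shift $\vk=(1,0)$, which pushes the positive-$\xi_1$ trapezoid of $E$ toward the negative-$\xi_1$ one, and a short computation shows overlap is avoided precisely when $\rho\le1$ or $\rho\ge\lambda^2$; the vertical shift $(0,1)$ is harmless once $\rho\le2$. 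The central square packs under $2\pi\Z^2$ once its half-side $\lambda^{-2}\rho\pi$ does not exceed $\pi$, and the boundary supports $E_{j,\pm}$, being contained in sheared copies of $E$, are treated the same way.

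The main obstacle is this last step. Unlike the partition of unity, which is a clean geometric tiling once the slope and annulus bookkeeping is set up, the packing condition is where the precise numerology of the admissible parameter ranges must be extracted, and one must track carefully how a $2\pi\Z^2$-translate of a trapezoidal support can re-enter the support after a horizontal shift. Assembling all the per-generator packing constraints and checking that they are met simultaneously on the two stated ranges $0<\rho\le1$ (for any $\lambda>1$) and $\lambda^2\le\rho\le2$ (for $1<\lambda\le\sqrt2$) is the crux of the argument; once the binding shift $(1,0)$ is identified and the square and boundary cases are reconciled with it, the remaining translates and generators are routine.
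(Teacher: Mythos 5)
Your proposal is correct and follows essentially the same route as the paper: invoke Corollary~\ref{cor:main} for nonnegative generators, verify the partition of unity \eqref{cond:PartionOfUnity} by checking that the sheared slope-slabs and the annular strips tile each cone and that the cones plus the central square tile $\R^2$ up to a null set, and verify \eqref{cond:non-overlap} by the $2\pi\Z^2$-packing of each support, with the extracted constraints ($\rho\le\lambda^2$ for $\fphi$; $\rho\le1$ or $\rho\ge\lambda^2$ together with $\rho\le2$ for $\fpsi$, and $E_{j,\pm}\subseteq E$ handling the corner pieces) matching the paper's computation exactly.
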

\begin{proof}
Note that for a fixed $j$, $\fpsi(\sh_\ell\db_\lambda^j\cdot)=\chi_{E_{j,\ell}}$ with
\[
E_{j,\ell} = \{\xi\in\R^2: \lambda^{-j}(-\ell-1/2)\le\xi_2/\xi_1\le \lambda^{-j}(-\ell+1/2), |\xi_1|\in(\lambda^{2j-2}\rho\pi,\lambda^{2j}\rho\pi]\}
\]
and $\fpsi^{j,-\ell_{\lambda^j}}(\sh_{-\ell_{\lambda^j}}\db_\lambda^j\cdot)=\chi_{E_{j,-\ell_{\lambda^j}}}, \fpsi^{j,\ell_{\lambda^j}}(\sh_{2^j}\db_\lambda^j\cdot)=\chi_{E_{j,\ell_{\lambda^j}}}$ with
\[
\begin{aligned}
E_{j,-\ell_{\lambda^j}} &= \{\xi\in\R^2: \lambda^{-j}(\ell_{\lambda^j}-1/2)\le\xi_2/\xi_1\le 1, |\xi_1|\in(\lambda^{2j-2}\rho\pi,\lambda^{2j}\rho\pi]\},\\
E_{j,\ell_{\lambda^j}} &= \{\xi\in\R^2: -1\le\xi_2/\xi_1\le -\lambda^{-j}(\ell_{\lambda^j}-1/2), |\xi_1|\in(\lambda^{2j-2}\rho\pi,\lambda^{2j}\rho\pi]\}.
\end{aligned}
\]
Thus, we have,
\[
\begin{aligned}
\cI_{\fPsi_j}^0(\db_\lambda^{j}\cdot)
& =\sum_{\ell=-\ell_{\lambda^j}-1}^{\ell_{\lambda^j}-1}|\fpsi(\sh_\ell\db_{\lambda}^{j}\cdot)|^2
   +|\fpsi^{j,-\ell_{\lambda^j}}(\sh_{-\ell_{\lambda^j}}\db_{\lambda}^{j}\cdot)|^2
   +|\fpsi^{j,\ell_{\lambda^j}}(\sh_{\ell_{\lambda^j}}\db_{\lambda}^{j}\cdot)|^2\\
& =\chi_{\cup_{\ell=-\ell_{\lambda^j}}^{\ell_{\lambda^j}}
E_{j,\ell}}
 =\chi_{\{\xi\in\R^2: -1\le\xi_2/\xi_1\le 1, |\xi_1|\in(\lambda^{2j-2}\rho\pi,\lambda^{2j}\rho\pi]\}}.
\end{aligned}
\]
Similarly, we have
\[
\begin{aligned}
\cI_{\fPsi_j}^0(\db_\lambda^{j}\exchg\cdot)&
&  =\chi_{\{\xi\in\R^2: -1\le\xi_1/\xi_2x\le 1,
|\xi_2|\in(\lambda^{2j-2}\rho\pi,\lambda^{2j}\rho\pi]\}}.
\end{aligned}
\]
Consequently, we obtain
\[
\cI_\fphi^0(\xi)+\sum_{j=0}^\infty[\cI_{\fPsi_j}^0(\db_\lambda^{j}\xi)+\cI_{\fPsi_j}^0(\db_\lambda^{j}\exchg\xi)] =1, \; a.e. \; \xi\in \R^2.
\]
Hence, \eqref{cond:PartionOfUnity} holds.

By our choice of $\rho$, we have $\rho\le\lambda^{2}$. Then
$-\lambda^{-2}\rho\pi+2\pi\ge \lambda^{-2}\rho\pi$, which implies
\[
(-\lambda^{-2}\rho\pi+2\pi,\lambda^{-2}\rho\pi+2\pi)\cap(-\lambda^{-2}\rho\pi,\lambda^{-2}\rho\pi)=\emptyset.
\]
Hence, $\fphi(\xi)\fphi(\xi+2\vk\pi)=0$, a.e. $\xi\in\R^2$ and
$\vk\in\Z^2\backslash\{0\}$. Similarly, the condition for $\fpsi$
that
\[
(-\rho\pi+2\pi,-\lambda^{-2}\rho\pi+2\pi)\cup (\lambda^{-2}\rho\pi+2\pi,\rho\pi+2\pi)]\cap[(-\rho\pi,-\lambda^{-2}\rho\pi)\cup(\lambda^{-2}\rho\pi,\rho\pi)]=\emptyset
\]
and
\[
(-\frac12\rho\pi+2\pi,\frac12\rho\pi+2\pi)\cap
(-\frac12\rho\pi,\frac12\rho\pi) =\emptyset,
\]
is equivalent to
\[
\rho\le\frac{2\lambda^2}{\lambda^2-1} \mbox{ and  } \rho\le1
\qquad\mbox{or}\qquad
\rho\le\frac{2\lambda^2}{\lambda^2-1} \mbox{ and  } \rho\ge\lambda^2 \mbox{ and  } \rho\le2,
\]
which is equivalent to our choice of $\rho$ and implies
$\fpsi(\xi)\fpsi(\xi+2\vk\pi)=0$, a.e. $\xi\in\R^2$ and
$\vk\in\Z^2\backslash\{0\}$. The case that
$\fpsi^{j,\pm\ell_{\lambda^j}}(\xi)\fpsi^{j,\pm\ell_{\lambda^j}}(\xi+2\vk\pi)=0$
a.e. $\xi\in\R^2$ and $\vk\in\Z^2\backslash\{0\}$ can be argued in
the same way. Hence, \eqref{cond:non-overlap} holds.

Note that all generators are nonnegative.
Therefore, by Corollary~\ref{cor:main}, $\FASS(\fphi;\{\fPsi_j\}_{j=0}^\infty)$  with $\fphi$ and $\fPsi_j$ being given as in \eqref{def:shannonPhiPsi} and
\eqref{def:shannonPsij} is a  frequency-based affine shear tight  frame for $L_2(\R^2)$.
\end{proof}
For an illustration of $\FASS(\fphi;\{\fPsi_j\}_{j=0}^\infty)$ with
$\lambda =2$, see Figure~\ref{fig:shearletTilings}. One of the main
goals of this paper is to construction smooth frequency-based affine
shear tight frames that in certain sense can be regarded as the
smoothened version of $\FASS(\fphi;\{\fPsi_j\}_{j=0}^\infty)$ as
discussed in Corollary~\ref{cor:ShannonShealets}.

\section{Sequences of  Frequency-based  Affine Shear Tight Frames}
\label{sec:DFAS-J}
Most current papers in the literature have investigated only one
single affine system. However, to have MRA structure, as argued in
\cite{Han:2012:FWS},  it is of fundamental importance to study a
sequence of affine systems. In order to study the MRA structure of
frequency-based affine shear systems, we next study sequences of
frequency-based affine shear systems. We first characterize a
sequence of frequency-based affine shear systems to be a sequence of
frequency-based affine shear tight frames for $L_2(\R^2)$. Then,
corresponding to Corollary~\ref{cor:main}, a simple characterization
shall be given for a sequence of frequency-based affine shear tight
frames with nonnegative generators.

We need an additional dilation matrix $\dm_\lambda:=\lambda^2I_2$
and we define $\dn_\lambda:=\dm_\lambda^{-\tp}=\lambda^{-2}I_2$. Let
$J$ be an integer and $\fphi^j, \fpsi, \fpsi^{j,\ell},
|\ell|=r_j+1,\ldots, s_j, j\ge J_0$ be functions in
$L_2^{loc}(\R^2)$. Let $\fPsi_j$ be defined as in \eqref{def:Psi_j}
and $\da_\lambda, \db_\lambda, \sh_\ell, \exchg$ be defined as
before. A \emph{ frequency-based  affine shear system}
$\FAS_J(\fphi^J; \{\fPsi_j\}_{j=J}^\infty)$ is then defined to be
\begin{equation}\label{def:FAS_J}
\begin{aligned}
\FAS_J(\fphi^J;
\{\fPsi_j\}_{j=J}^\infty):=&\{\fphi^J_{\dn_{\lambda}^{J};0,\vk}:
\vk\in\Z^2\}\cup\{\fh_{ \db_\lambda^j;0,\vk},
\fh_{\db_\lambda^j\exchg;0,\vk}: \vk\in\Z^2,
\fh\in\fPsi_j\}_{j=J}^\infty.
\end{aligned}
\end{equation}
We have the following characterization for a sequence of affine
shear systems $\FAS_J(\fphi^J;\{\fPsi_j\}_{j=J}^\infty)$, $J\ge J_0$
to be a sequence of affine shear tight frames for $L_2(\R^2)$.
\begin{theorem}\label{thm:main2}
Let $\dm_\lambda, \dn_{\lambda}, \da_\lambda, \db_\lambda, \sh_\ell,
\exchg$ be defined as before and $J_0$ be an integer. Let
$\FAS_J(\fphi^J; \{\fPsi_j\}_{j=J}^\infty)$ be defined as in
\eqref{def:FAS_J} with all generators
$\{\fphi^J\}\cup\{\fPsi_j\}_{j=J}^\infty\subseteq L_2^{loc}(\R^2)$
for all $J\ge J_0$. Then the following statements are equivalent to
each other.
\begin{enumerate}
\item[\rm{(1)}]  $\FAS_J(\fphi^J; \{\fPsi_j\}_{j=J}^\infty)$ is a  frequency-based  affine shear tight frame for $L_2(\R^2)$, i.e., all generators are from $L_2(\R^2)$ and for all $\ff\in L_2(\R^2)$,
\begin{equation}\label{def:tight_J}
\begin{aligned}
(2\pi)^2\|\ff\|_2^2=\sum_{\vk \in\Z^2}
|\ip{\ff}{\fphi^J_{\dn_{\lambda}^J;0,\vk}}|^2
&+\sum_{j=J}^\infty\sum_{\fh\in\fPsi_j}\sum_{\vk \in\Z^2}
(|\ip{\ff}{\fh_{\db_\lambda^{j};0,\vk}}|^2+|\ip{\ff}{\fh_{\db_\lambda^{j}\exchg;0,\vk}}|^2)
\end{aligned}
\end{equation}
for every integer $J\ge J_0$.

\item[\rm{(2)}]
 The following identities hold:
\begin{align}\label{eq:fPhiTof}
\lim_{j\rightarrow\infty} \sum_{\vk \in\Z^2}
|\ip{\ff}{\fphi^{j}_{\dn_{\lambda}^j;0,\vk}}|^2=
(2\pi)^2\|\ff\|_2^2\quad \forall \ff\in\mathscr{D}(\R^2)
\end{align}
and for all $j\ge J_0$,
\begin{equation}\label{eq:fPhiAddfPsi}
\begin{aligned}
\sum_{\vk \in\Z^2}
|\ip{\ff}{\fphi^{j+1}_{\dn_{\lambda}^{j+1};0,\vk}}|^2=&\sum_{\vk \in\Z^2}
|\ip{\ff}{\fphi^{j}_{\dn_{\lambda}^j;0,\vk}}|^2
+\sum_{\ell=-r_j}^{r_j}\sum_{\vk \in\Z^2}
(|\ip{\ff}{\fpsi_{\sh_\ell\db_\lambda^j;0,\vk}}|^2+|\ip{\ff}{\fpsi_{\sh_\ell\db_\lambda^j\exchg;0,\vk}}|^2)\\
&+\sum_{|\ell|=r_j+1}^{s_j}\sum_{\vk \in\Z^2}
(|\ip{\ff}{\fpsi^{j,\ell}_{\sh_\ell\db_\lambda^j;0,\vk}}|^2+|\ip{\ff}{\fpsi^{j,\ell}_{\sh_\ell\db_\lambda^j\exchg;0,\vk}}|^2)\quad\forall\ff\in\nDC{2}.
\end{aligned}
\end{equation}

\item[\rm{(3)}] The following identities hold:
\begin{align}\label{eq:SimpleCond3}
\lim_{j\rightarrow\infty}\ip{|\fphi^{j}(\dn_{\lambda}^j\cdot)|^2}{\fh}=
\ip{ 1}{\fh} \quad \forall \fh\in\nDC{2}
\end{align}
and for all integers $j\ge J_0$,
\begin{align}\label{eq2:k}
\cI_{\fphi^j}^{\dn_{\lambda}^j\vk}(\dn_{\lambda}^j\xi)
+\left(\cI_{\fPsi_j}^{\db_\lambda^j\vk}(\db_\lambda^j\xi)+\cI_{\fPsi_j}^{\db_\lambda^j\exchg\vk}(\db_\lambda^j\exchg\xi)\right)
=\cI_{\fphi^{j+1}}^{\dn_{\lambda}^{j+1}\vk}(\dn_{\lambda}^{j+1}\xi)\quad \end{align}
for a.e. $\xi\in\R^2$, $\vk\in([\dm_\lambda^{j}\Z^2]\cup[\dm_{\lambda}^{(j+1)}\Z^2]\cup[\da_\lambda^j\Z^2]\cup[\exchg\da_\lambda^j\Z^2])$,
where $\cI_{\fphi^j}^\vk,\cI_{\fPsi_j}^\vk$ are similarly defined as in \eqref{def:IPhiIPsi}.
\end{enumerate}
\end{theorem}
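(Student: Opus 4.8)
The plan is to establish the two equivalences (1)$\Leftrightarrow$(2) and (2)$\Leftrightarrow$(3) separately: the first is a purely Hilbert-space telescoping argument, while the second transports the frame-operator identities of (2) to the pointwise correlation identities of (3) by invoking the integral representation \cite[Lemma~10]{Han:2012:FWS} exactly as in the proof of Theorem~\ref{thm:main}. Throughout I would abbreviate the scaling part $A_j(\ff):=\sum_{\vk\in\Z^2}|\ip{\ff}{\fphi^j_{\dn_\lambda^j;0,\vk}}|^2$ and the wavelet part $W_j(\ff):=\sum_{\fh\in\fPsi_j}\sum_{\vk\in\Z^2}(|\ip{\ff}{\fh_{\db_\lambda^j;0,\vk}}|^2+|\ip{\ff}{\fh_{\db_\lambda^j\exchg;0,\vk}}|^2)$, so that \eqref{def:tight_J} reads $(2\pi)^2\|\ff\|_2^2=A_J(\ff)+\sum_{j=J}^\infty W_j(\ff)$ and the right-hand side of \eqref{eq:fPhiAddfPsi} is $A_j(\ff)+W_j(\ff)$ (the explicit $\ell$-sums there coinciding with $W_j$ because $\sh_\ell$ preserves $\Z^2$).

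For (1)$\Rightarrow$(2), I would subtract the tight-frame identity \eqref{def:tight_J} at level $J+1$ from the one at level $J$; since both sides equal $(2\pi)^2\|\ff\|_2^2$, this yields $A_{J+1}(\ff)=A_J(\ff)+W_J(\ff)$, which is \eqref{eq:fPhiAddfPsi}. For \eqref{eq:fPhiTof}, the identity at $J=J_0$ shows $\sum_{j=J_0}^\infty W_j(\ff)$ converges, so its tail $\sum_{j=J}^\infty W_j(\ff)\to 0$ as $J\to\infty$ (all $W_j\ge 0$), forcing $A_J(\ff)\to(2\pi)^2\|\ff\|_2^2$. Conversely, for (2)$\Rightarrow$(1), telescoping \eqref{eq:fPhiAddfPsi} from $j=J$ to $j=J'$ gives $A_{J'+1}(\ff)=A_J(\ff)+\sum_{j=J}^{J'}W_j(\ff)$; letting $J'\to\infty$ and substituting \eqref{eq:fPhiTof} produces \eqref{def:tight_J} for all $\ff\in\nDC{2}$, and a density argument—together with the verification that every generator in fact lies in $L_2(\R^2)$, extracted as at the end of the proof of Theorem~\ref{thm:main}—extends it to all $\ff\in L_2(\R^2)$.

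For (2)$\Leftrightarrow$(3), I would apply \cite[Lemma~10]{Han:2012:FWS} to each of $A_j(\ff)$, $A_{j+1}(\ff)$, and $W_j(\ff)$, rewriting \eqref{eq:fPhiAddfPsi} as a single identity $\int_{\R^2}\sum_{\vk\in\Lambda_j}\ff(\xi)\overline{\ff(\xi+2\pi\vk)}\,G_j^\vk(\xi)\,d\xi=0$ for all $\ff\in\nDC{2}$, where $G_j^\vk(\xi)$ is the difference of the four correlation terms in \eqref{eq2:k} and $\Lambda_j:=[\dm_\lambda^j\Z^2]\cup[\dm_\lambda^{j+1}\Z^2]\cup[\da_\lambda^j\Z^2]\cup[\exchg\da_\lambda^j\Z^2]$ collects exactly the frequency shifts contributed by the modulation lattices of the two scaling terms and of the two cone-wavelet terms. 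Since $\lambda>1$ makes $\Lambda_j$ discrete and closed (as verified for $\Lambda$ in Theorem~\ref{thm:main}), I would isolate each $\vk$ by choosing $\ff$ supported in a small ball disjoint from its $2\pi\vk$-shift, concluding $G_j^\vk=0$ a.e., which is precisely \eqref{eq2:k}; the reverse direction follows by summing the pointwise identities back into the integral. Applying the same representation to \eqref{eq:fPhiTof} and noting that for fixed compactly supported $\ff$ the off-diagonal shifts lie in the coarse lattice $\dm_\lambda^j\Z^2\setminus\{0\}$ and hence contribute nothing once $2\pi\lambda^{2j}$ exceeds the diameter of $\supp\ff$, the limit collapses to $\lim_j\int_{\R^2}|\ff(\xi)|^2|\fphi^j(\dn_\lambda^j\xi)|^2\,d\xi=\|\ff\|_2^2$, which is \eqref{eq:SimpleCond3} after a polarization/density reduction over test functions of the form $|\ff|^2$.

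I expect the main obstacle to be the bookkeeping in the (2)$\Leftrightarrow$(3) step: one must correctly identify the modulation lattice dual to each dilation $\dn_\lambda^j$, $\dn_\lambda^{j+1}$, $\sh_\ell\db_\lambda^j$, and $\sh_\ell\db_\lambda^j\exchg$—using that $\sh_\ell$ and $\sh_\ell^{-\tp}$ preserve $\Z^2$, so that the wavelet cross-terms land in $\da_\lambda^j\Z^2$ and $\exchg\da_\lambda^j\Z^2$—and then verify that $\Lambda_j$ exhausts every shift for which some correlation term is nonzero, so that the localization argument cleanly extracts \eqref{eq2:k}. Controlling the absolute and local convergence of the resulting sums, exactly as in \eqref{cond:LambdaFinte}, is the delicate point.
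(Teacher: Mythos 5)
Your proposal is correct and follows essentially the same route as the paper: the (1)$\Leftrightarrow$(2) equivalence via differencing/telescoping the tight-frame identities at consecutive scales, and the (2)$\Leftrightarrow$(3) equivalence via \cite[Lemma~10]{Han:2012:FWS} combined with the localization argument over the discrete lattice $\Lambda_j$ and the eventual vanishing of off-diagonal shifts that reduces \eqref{eq:fPhiTof} to \eqref{eq:SimpleCond3}. No substantive differences from the paper's proof.
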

\begin{proof}
(1)$\Rightarrow$(2). Considering \eqref{def:tight_J} with two
consecutive $J$ and $J+1$ with $J\ge J_0$, the difference gives
\eqref{eq:fPhiAddfPsi}. Now by \eqref{eq:fPhiAddfPsi}, it is easy to
deduce that,
\begin{equation}\label{eq:fPhiAddfPsi_J}
\begin{aligned}
\sum_{\vk \in\Z^2}
|\ip{\ff}{\fphi^{J'}_{\dn_{\lambda}^{J'};0,\vk}}|^2&=\sum_{\vk
\in\Z^2} |\ip{\ff}{\fphi^{J}_{\dn_{\lambda}^J;0,\vk}}|^2
+\sum_{j=J}^{J'-1}\Bigg(\sum_{\ell=-r_j}^{r_j}\sum_{\vk \in\Z^2}
(|\ip{\ff}{\fpsi_{\sh_\ell\db_\lambda^j;0,\vk}}|^2+|\ip{\ff}{\fpsi_{\sh_\ell\db_\lambda^j\exchg;0,\vk}}|^2)
\\&+\sum_{|\ell|=r_j+1}^{s_j}\sum_{\vk \in\Z^2}
(|\ip{\ff}{\fpsi^{j,\ell}_{\sh_\ell\db_\lambda^j;0,\vk}}|^2+|\ip{\ff}{\fpsi^{j,\ell}_{\sh_\ell\db_\lambda^j\exchg;0,\vk}}|^2)\Bigg)\quad
\forall J'\ge J.
\end{aligned}
\end{equation}
Therefore, by letting $J'\rightarrow\infty$, we see that \eqref{eq:fPhiTof} holds.

(2)$\Rightarrow$(1). By \eqref{eq:fPhiAddfPsi}, we deduce that
\eqref{eq:fPhiAddfPsi_J} holds. By letting $J'\rightarrow\infty$ and
in view of \eqref{eq:fPhiTof}, we conclude that \eqref{def:tight_J}
holds.

(2)$\Leftrightarrow$(3). By \cite[Lemma~10]{Han:2012:FWS}, we can show that \eqref{eq:fPhiAddfPsi} is equivalent to
\begin{equation}\label{eq2:k_J}
\int_{\R^2}\sum_{\vk\in\Lambda_j}\ff(\xi)\overline{\ff(\xi+2\pi
\vk)}\left(
\Big[\cI_{\fphi^j}^{\dn_{\lambda}^j\vk}(\dn_{\lambda}^j\xi)
+\cI_{\fPsi_j}^{\db_\lambda^j\vk}(\db_\lambda^j\xi)+\cI_{\fPsi_j}^{\db_\lambda^j\exchg\vk}(\db_\lambda^j\exchg\xi)\Big]
-\cI_{\fphi^{j+1}}^{\dn_{\lambda}^{j+1}\vk}(\dn_{\lambda}^{j+1}\xi)\right)d\xi=0,
\end{equation}
where
$\Lambda_j=[\dm_{\lambda}^{j}\Z^2]\cup[\dm_{\lambda}^{j+1}\Z^2]\cup[\da_\lambda^j\Z^2]\cup[\exchg\da_\lambda^j\Z^2]$.
Since $\dm_{\lambda}=\lambda^2 I_2$ and $\da_\lambda =
\diag(\lambda^2,\lambda)$ with $\lambda>1$, we see that the lattice
$\Lambda_j$ is discrete. By the same argument as in the proof of
Theorem~\ref{thm:main}, we see that \eqref{eq2:k_J} is equivalent to
\eqref{eq2:k}.

By \cite[Lemma~10]{Han:2012:FWS}, we see that \eqref{eq:fPhiTof} is equivalent to
\begin{equation}\label{eq3:k_J}
\lim_{j\rightarrow\infty}\int_{\R^2}\sum_{\vk\in
[\dm_{\lambda}^j\Z^2]}\ff(\xi)\overline{\ff(\xi+2\pi \vk)}
\cI_{\fphi^j}^{\dn_{\lambda}^j\vk}(\dn_{\lambda}^j\xi)=(2\pi)^2\|\ff\|_2^2\quad
\forall \ff\in\nDC{2}.
\end{equation}
Since $\ff\in\mathscr{D}(\R^2)$  is compactly supported, there
exists $c>0$ such that $\ff(\xi)\overline{\ff(\xi+2\pi \vk)}=0$ for
all $\xi\in\R^2$ and $|\vk|>c$. Also, by our assumption on
$\dm_{\lambda}$ and $\da_\lambda$, it is easy to show that
\begin{equation}\label{cond:LambdaFinte_J}
\{j\in\Z^2: j\ge J_0, [\dn_{\lambda}^jB_c(0)]\cap\Z^2\neq\{0\}\} \quad\mbox{is a finite set for every}\quad c\in[1,\infty).
\end{equation}
Hence, there exists $J''\ge J_0$ such that $\ff(\xi)\overline{\ff(\xi+2\pi\vk)}\cI_{\fphi^j}^{\dn_{\lambda}^j\vk}(\dn_{\lambda}^j\xi) = 0$ for
all $\xi\in\R^2$, $\vk\in[\dm_{\lambda}^j\Z^2]\backslash\{0\}$, and $j\ge J''$. Consequently, for $j\ge J''$, \eqref{eq3:k_J} becomes
\begin{equation}\label{eq3:k0_J}
\lim_{j\rightarrow\infty}\int_{\R^2}|\ff(\xi)|^2
\cI_{\fphi^j}^{0}(\dn_{\lambda}^j\xi)=(2\pi)^2\|\ff\|_2^2 \quad
\forall \ff\in\mathscr{D}(\R^2),
\end{equation}
which is equivalent to \eqref{eq:SimpleCond3}.
\end{proof}

If all elements  $\fphi^j,\fpsi,\fpsi^{j,\ell}$ are nonnegative, we
have the following simple characterization; also see \cite[ Cor.
18]{Han:2012:FWS}.

\begin{corollary}\label{cor:positive}
Let $\dm_\lambda, \dn_{\lambda}, \da_\lambda, \db_\lambda, \sh_\ell,
\exchg$ be defined as before and $J_0$ be an integer. Let
$\FAS_J(\fphi^J; \{\fPsi_j\}_{j=J}^\infty)$ be defined as in
\eqref{def:FAS_J} with all generators
$\{\fphi^J\}\cup\{\fPsi_j\}_{j=J}^\infty\subseteq L_2^{loc}(\R^2)$
for all $J\ge J_0$. Suppose that
\begin{equation}\label{cond:positive_J}
\fh\ge0 \mbox{ for all } \fh\in\{\fphi^j,\fpsi,\fpsi^{j,\ell}: j\ge J_0, |\ell|=r_j+1,\ldots,s_j\}.
\end{equation}
Then, for all integers $J\ge J_0$,
$\FWS_J(\fphi^J;\{\fPsi_j\}_{j=J}^\infty)$ is a  frequency-based
affine shear tight frame for $L_2(\R^2)$  if and only if
\begin{align}\label{eq:SimpleCond1}
\fh(\xi)\fh(\xi+2\pi \vk)=0, \; a.e., \xi\in\R^2,
\vk\in\Z^2\backslash\{0\},\mbox{ and
}\;\fh\in\{\fphi^j,\fpsi,\fpsi^{j,\ell}: j\ge J_0,
|\ell|=r_j+1,\ldots,s_j\},
\end{align}
\begin{equation}
\label{eq:SimpleCond2}
\begin{aligned}
|\fphi^{j+1}(\dn_{\lambda}^{j+1}\xi)|^2=&|\fphi^j(\dn_{\lambda}^j\xi)|^2
+\sum_{\ell=-r_j}^{r_j}(|\fpsi(\sh_\ell\db_\lambda^j\xi)|^2+|\fpsi(\sh_\ell\db_\lambda^j\exchg\xi)|^2)
\\&+\sum_{|\ell|=r_j+1}^{s_j}(|\fpsi^{j,\ell}(\sh_\ell\db_\lambda^j\xi)|^2+|\fpsi^{j,\ell}(\sh_\ell\db_\lambda^j\exchg\xi)|^2)
,\quad a.e.,\xi\in\R^2, \; j\ge J_0,
\end{aligned}
\end{equation}
and \eqref{eq:SimpleCond3} holds.
\end{corollary}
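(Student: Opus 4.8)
The plan is to derive the corollary from Theorem~\ref{thm:main2}, whose equivalence (1)$\Leftrightarrow$(3) already reduces the tight-frame property at every level $J\ge J_0$ to the scaling identity \eqref{eq:SimpleCond3} together with the refinement-type relations \eqref{eq2:k}. Since \eqref{eq:SimpleCond3} appears verbatim among the asserted conditions, it remains only to show that, under the nonnegativity hypothesis \eqref{cond:positive_J}, the whole family \eqref{eq2:k} (for all $j\ge J_0$ and all $\vk$ in $\Lambda_j=[\dm_\lambda^j\Z^2]\cup[\dm_\lambda^{j+1}\Z^2]\cup[\da_\lambda^j\Z^2]\cup[\exchg\da_\lambda^j\Z^2]$) is equivalent to the non-overlapping condition \eqref{eq:SimpleCond1} together with the recursion \eqref{eq:SimpleCond2}. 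I would first dispose of the $\vk=\mathbf{0}$ instance, which is pure bookkeeping: since $\cI_{\fphi^j}^{\mathbf{0}}=|\fphi^j|^2$ and $\cI_{\fPsi_j}^{\mathbf{0}}(\xi)=\sum_\ell|\fpsi^{j,\ell}(\sh_\ell\xi)|^2$, inserting the dilated arguments turns \eqref{eq2:k} at $\vk=\mathbf{0}$ into exactly \eqref{eq:SimpleCond2} (this uses only the definitions \eqref{def:IPhiIPsi}, not nonnegativity).

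For the direction \eqref{eq:SimpleCond1}$+$\eqref{eq:SimpleCond2}$\Rightarrow$\eqref{eq2:k}, I would fix $j$ and $\vk\in\Lambda_j\setminus\{\mathbf{0}\}$ and inspect the four terms separately. Each is of the form $\cI_{\fh}^{\mathbf{m}}$ with superscript $\mathbf{m}\in\{\dn_\lambda^j\vk,\db_\lambda^j\vk,\db_\lambda^j\exchg\vk,\dn_\lambda^{j+1}\vk\}$; if $\mathbf{m}\notin\Z^2$ the term vanishes by \eqref{def:IPhiIPsi}, and if $\mathbf{m}\in\Z^2$ then $\mathbf{m}\ne\mathbf{0}$ (the matrices are invertible) so the term vanishes by \eqref{eq:SimpleCond1}. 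For the $\fPsi_j$-terms one uses in addition that each $\sh_\ell$ maps $\Z^2$ bijectively onto itself, so the effective shift $\sh_\ell\mathbf{m}$ is again a nonzero integer vector and \eqref{eq:SimpleCond1} applies to every summand; thus both sides of \eqref{eq2:k} are $0$.

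The substantive direction is \eqref{eq2:k}$\Rightarrow$\eqref{eq:SimpleCond1}, where I would exploit that \eqref{eq2:k} holds at all scales at once. Writing $a_i:=\cI_{\fphi^i}^{\dn_\lambda^i\vk}(\dn_\lambda^i\xi)$ for a fixed $\vk$, relation \eqref{eq2:k} reads $a_{i+1}=a_i+b_i$ with $b_i:=\cI_{\fPsi_i}^{\db_\lambda^i\vk}(\db_\lambda^i\xi)+\cI_{\fPsi_i}^{\db_\lambda^i\exchg\vk}(\db_\lambda^i\exchg\xi)\ge0$ by \eqref{cond:positive_J}; hence $(a_i)$ is nonnegative and non-decreasing. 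To prove $\cI_{\fphi^j}^{\mathbf{m}}=0$ for a prescribed $\mathbf{m}\in\Z^2\setminus\{\mathbf{0}\}$ I set $\vk:=\dm_\lambda^j\mathbf{m}$, so $\dn_\lambda^i\vk=\lambda^{-2(i-j)}\mathbf{m}$ and $a_j=\cI_{\fphi^j}^{\mathbf{m}}(\dn_\lambda^j\xi)$. Since $\|\lambda^{-2k}\mathbf{m}\|\to0$, there is a smallest $k\ge1$ with $\lambda^{-2k}\mathbf{m}\notin\Z^2$; for $i=j,\dots,j+k-1$ the superscript $\dn_\lambda^i\vk$ is a nonzero integer vector, so $\vk\in\dm_\lambda^i\Z^2\subseteq\Lambda_i$ and \eqref{eq2:k} applies, while at $i=j+k$ the superscript leaves $\Z^2$ and forces $a_{j+k}=0$. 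Monotonicity then gives $0\le a_j\le\cdots\le a_{j+k}=0$, so $a_j=0$, i.e. \eqref{eq:SimpleCond1} for every $\fphi^j$. Feeding this back into \eqref{eq2:k} collapses it, for $\vk\ne\mathbf{0}$, to $b_i=0$; choosing $\vk=\da_\lambda^i\mathbf{n}$ to realize any $\mathbf{n}\in\Z^2\setminus\{\mathbf{0}\}$ as $\db_\lambda^i\vk$ then yields $\cI_{\fPsi_i}^{\mathbf{n}}=0$, and by nonnegativity each summand $\fpsi^{i,\ell}(\sh_\ell\cdot)\fpsi^{i,\ell}(\sh_\ell(\cdot+2\pi\mathbf{n}))$ vanishes, giving \eqref{eq:SimpleCond1} for $\fpsi$ and all $\fpsi^{i,\ell}$.

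I expect the delicate point to be the bookkeeping in this telescoping step: one must guarantee that the single $\vk$ chosen to isolate $\cI_{\fphi^j}^{\mathbf{m}}$ actually lies in $\Lambda_i$ at every intermediate scale $i$ of the chain, which is precisely what the ``smallest $k$ with $\lambda^{-2k}\mathbf{m}\notin\Z^2$'' device secures, and then that after establishing non-overlapping for the $\fphi^i$ one can disentangle the $\ell$-sum inside $\cI_{\fPsi_i}$ into the per-generator statements. Both are elementary once arranged, but they are exactly where nonnegativity and the multi-scale nature of the hypothesis are genuinely used.
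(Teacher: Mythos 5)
Your proposal is correct and follows essentially the same route as the paper: both reduce the corollary to the equivalence of items (1) and (3) of Theorem~\ref{thm:main2}, identifying \eqref{eq2:k} at $\vk=\mathbf{0}$ with \eqref{eq:SimpleCond2} and at $\vk\neq\mathbf{0}$ with \eqref{eq:SimpleCond1}. Your multi-scale telescoping argument (writing $a_{i+1}=a_i+b_i$ with $b_i\ge0$ and exploiting the eventual exit of $\dn_{\lambda}^{i}\vk$ from $\Z^2$) rigorously justifies the latter identification, which the paper's proof asserts in a single line without detail.
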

\begin{proof}
When \eqref{cond:positive_J} holds, by item (3) of
Theorem~\ref{thm:main2}, for $\vk\in\Z^2\backslash\{0\}$,
\eqref{eq2:k} is equivalent to \eqref{eq:SimpleCond1}. For $\vk=0$,
\eqref{eq2:k} is equivalent to \eqref{eq:SimpleCond2}. Together with
the condition \eqref{eq:SimpleCond3} and by item (3) of
Theorem~\ref{thm:main2}, the claim follows from the equivalence
between item (1) and item (3) of Theorem~\ref{thm:main2}.
\end{proof}
The condition in \eqref{eq:SimpleCond3} can be further simplified as
in the following lemma.
\begin{lemma}\label{lemma:fphi}
Suppose that there exist two positive numbers $c$ and $C$ such that
\begin{equation}\label{eq:SimpleCond3Onfphi}
|\fphi^j(\xi)|\le C,\; a.e.\;\xi\in[-c,c]^2 \mbox{ and }\forall j\ge
J_0.
\end{equation}
Assume that
$\fg(\xi):=\lim_{j\rightarrow\infty}|\fphi^j(\dn_\lambda^j\xi)|^2$
exists for almost every $\xi\in\R^2$. Then \eqref{eq:SimpleCond3}
holds if and only if $\fg(\xi)= 1$, a.e. $\xi\in\R^2$.
\end{lemma}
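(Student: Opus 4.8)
The plan is to read \eqref{eq:SimpleCond3} as a weak convergence statement: it asserts that the nonnegative functions $\xi\mapsto|\fphi^j(\dn_\lambda^j\xi)|^2$ converge to the constant function $1$ when tested against every $\fh\in\nDC{2}$. Since we are handed the \emph{pointwise} limit $\fg$, the natural strategy is to identify the weak limit with $\fg$ by passing the limit inside the integral via the Dominated Convergence Theorem, and then to invoke density of $\nDC{2}$. Concretely, once one shows
\[
\lim_{j\to\infty}\ip{|\fphi^j(\dn_\lambda^j\cdot)|^2}{\fh}=\ip{\fg}{\fh}\qquad\forall\,\fh\in\nDC{2},
\]
condition \eqref{eq:SimpleCond3} reduces to $\ip{\fg-1}{\fh}=0$ for all $\fh\in\nDC{2}$, from which the claimed equivalence with $\fg\equiv1$ will follow. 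The $\Leftarrow$ direction is immediate, so all the content sits in justifying the interchange of limit and integral, i.e.\ in producing a dominating function.

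The main step, and the only genuine obstacle, is this domination. First I would fix $\fh\in\nDC{2}$ and pick $R>0$ with $\supp\fh\subseteq[-R,R]^2$. Because $\dn_\lambda^j=\lambda^{-2j}I_2$ with $\lambda>1$, every $\xi\in[-R,R]^2$ satisfies $\dn_\lambda^j\xi\in[-\lambda^{-2j}R,\lambda^{-2j}R]^2$, so there is a threshold $J_1\ge J_0$ (depending only on $R,c,\lambda$) with $\dn_\lambda^j\xi\in[-c,c]^2$ for all $j\ge J_1$ and all $\xi\in\supp\fh$. For such $j$ the hypothesis \eqref{eq:SimpleCond3Onfphi} gives the uniform bound $|\fphi^j(\dn_\lambda^j\xi)|^2\le C^2$ on $\supp\fh$, so the integrands are dominated for all $j\ge J_1$ by the fixed integrable function $C^2|\fh|$. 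Since the limit as $j\to\infty$ is unaffected by the finitely many terms with $j<J_1$, the Dominated Convergence Theorem applies and yields the displayed identity with $\ip{\fg}{\fh}$ on the right. This is exactly where both hypotheses are consumed: the contraction $\dn_\lambda^j\to0$ shifts the relevant arguments into the fixed window $[-c,c]^2$, and the uniform-in-$j$ bound $C$ on that window supplies the dominating function.

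Finally, comparing with $\ip{1}{\fh}$ shows \eqref{eq:SimpleCond3} is equivalent to $\ip{\fg-1}{\fh}=0$ for every $\fh\in\nDC{2}$. Applying the support-contraction argument pointwise shows that for a.e.\ $\xi$ one has $|\fphi^j(\dn_\lambda^j\xi)|^2\le C^2$ for all large $j$, so letting $j\to\infty$ gives $0\le\fg(\xi)\le C^2$ a.e.; in particular $\fg-1$ is bounded, hence locally integrable. By the standard fact that a locally integrable function whose integral against every test function vanishes must be zero almost everywhere (density of $\nDC{2}$), we conclude $\fg=1$ a.e.; the converse implication is trivial from the display. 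I anticipate no difficulty beyond carefully recording the threshold $J_1$ and the local boundedness $\fg\le C^2$ that legitimizes the concluding density argument.
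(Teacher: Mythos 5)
Your proof is correct and follows essentially the same route as the paper: both obtain the dominating function $C^2|\fh|$ by noting that $\dn_\lambda^j$ contracts $\supp\fh$ into $[-c,c]^2$ for large $j$ so that \eqref{eq:SimpleCond3Onfphi} applies, invoke dominated convergence to identify the weak limit with $\fg$, and conclude via testing $\fg-1$ against all of $\nDC{2}$. Your added remarks (the explicit threshold $J_1$ and the bound $\fg\le C^2$ ensuring local integrability) are harmless elaborations of what the paper leaves implicit.
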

\begin{proof}
Since $\fh\in\nDC{2}$ has compact support and
$\dn_\lambda^{-1}=\dm_\lambda$ is expansive, there exists $J\in\N$
such that
\[
|\fphi^j(\dn_\lambda^j\xi)|^2|\fh(\xi)|\le C^2|\fh(\xi)|\quad
\forall j\ge J,\; a.e.\; \xi\in\R^2.
\]
Since $\fh\in L_1(\R^2)$, by  Lebesgue Dominated Convergence
Theorem, we have
\[
\lim_{j\rightarrow\infty}\ip{|\fphi^j(\dn_\lambda^j\cdot)|^2}{\fh}=\ip{\lim_{j\rightarrow\infty}|\fphi^j(\dn_\lambda^j\cdot)|^2}{\fh}=
\ip{\fg}{\fh}.
\]
Now it is trivial to see that \eqref{eq:SimpleCond3} holds if and
only if $\ip{\fg}{\fh}=\ip{1}{\fh}$ for all $\fh\in\nDC{2}$, which
is equivalent to $\fg(\xi)=1$ for almost every $\xi\in\R^2$.
\end{proof}

Consider the toy example   in Corollary~\ref{cor:ShannonShealets}.
%
%
Now define $\fphi^j:=\fphi$ and $\fPsi_j:=\{\fpsi(\sh_\ell\cdot):
\ell=-\ell_{\lambda^j}+1,\ldots,\ell_{\lambda^j}-1\}\cup\{\fpsi^{j,\pm\ell_{\lambda^j}}\}$
with $\fpsi, \fpsi^{j,\pm\ell_{\lambda^j}}$ being constructed as in
Corollary~\ref{cor:ShannonShealets}. Then condition
\eqref{eq:SimpleCond3} holds by Lemma~\ref{lemma:fphi} since
$\fphi^j$ satisfies \eqref{eq:SimpleCond3Onfphi} and
$\fg(\xi)=\lim_{j\rightarrow\infty}|\fphi^j(\dn_\lambda^j\xi)|^2 =1$
a.e., $\xi\in\R^2$. Condition \eqref{eq:SimpleCond1} directly
follows from the proof of Corollary~\ref{cor:ShannonShealets}.
Condition \eqref{eq:SimpleCond2} holds by our construction.
Therefore, by Corollary~\ref{cor:positive},
$\FAS_J(\fphi^J;\{\fPsi_j\}_{j=J}^\infty)$ is a frequency-based
affine shear tight frame for $L_2(\R^2)$ for any integer $J\ge0$.

A sequence of frequency-based affine shear tight frames naturally
induces an MRA structure $\{\V_j\}_{j=J_0}^\infty$ with
$\V_j:=\overline{\rm span}\{\varphi^j(\dm_\lambda^j\cdot-\vk):
\vk\in\Z^2\}$, where $\varphi^j = \ft^{-1}\fphi^j$. But so far, the
toy example and its induced sequence of systems are not smooth since
all their generators are discontinuous.  In the next section, we
shall focus on the construction of  smooth  frequency-based  affine
shear tight frames for $L_2(\R^2)$ that have many desirable
properties. We shall show that not only our systems can achieve
smoothness of generators, but also they  have shear structure and
more importantly, an MRA structure could be deduced from such type
of systems.

\section{Construction of Smooth  Frequency-based  Affine Shear Tight Frames}
\label{sec:construction} In this section we shall provide two types
of constructions of smooth frequency-based affine shear tight
frames: one is non-stationary construction and the other is
quasi-stationary construction. Both these two types of constructions
use the idea of normalization. In essence, we first construct a
smooth frequency-based affine shear frame for $L_2(\R^2)$ and then a
normalization procedure is applied to such a frame. The
non-stationary construction uses different  functions $\fphi^j$ for
different scales $j$, while the quasi-stationary construction
employs a single function $\fphi$ for every scale. We first need
some auxiliary results and then provide details on the two types of
constructions.

\subsection{Auxiliary results}
We shall use a function $\nu\in C^\infty(\R)$ such that $\nu(x)=0$
for $x\le-1$, $\nu(x)=1$ for $x\ge1$, and $|\nu(x)|^2+|\nu(-x)|^2=1$
for all $x\in\R$. There are many choices of such functions. For
example, define $f(x):=e^{-1/x^2}$ for $x>0$ and $f(x):=0$ for
$x\le0$, and let $g(x):=\int_{-1}^xf(1+t)f(1-t)dt$. Define
\begin{equation}\label{def:nu}
\nu(x):=\frac{g(x)}{\sqrt{|g(x)|^2+|g(-x)|^2}},\quad x\in\R.
\end{equation}
Then $\nu\in C^\infty(\R)$ is a desired function. Using such a
function $\nu$, we now construct our building block
$\ff_{[a,b];\varepsilon_1,\varepsilon_2}$ with $a,b\in\R$,
$\varepsilon_1,\varepsilon_2>0$ and $\varepsilon_1+\varepsilon_2\le
b-a$ as follows.
\begin{equation}\label{def:fmu}
\begin{aligned}
\ff_{[a,b];\varepsilon_1,\varepsilon_2}(x)&=
\begin{cases}
\nu(\frac{x-a}{\varepsilon_1})&x<a+\varepsilon_1\\
1& a+\varepsilon_1\le x\le b-\varepsilon_2\\
\nu(\frac{-x+b}{\varepsilon_2})&x>b-\varepsilon_2.\\
\end{cases}
\end{aligned}
\end{equation}
Then, $\ff_{[a,b];\varepsilon_1,\varepsilon_2}\in C^\infty(\R)$ and $\supp \ff_{[a,b];\varepsilon_1,\varepsilon_2}=[a-\varepsilon_1,b+\varepsilon_2]$.

Define $\falpha_{\lambda,t,\rho}, \fbeta_{\lambda,t,\rho}$ with
$\lambda>1$, $0<t\le1$ ,and $0<\rho\le\lambda^2$ as follows:
\begin{equation}\label{def:falphafbeta}
\begin{aligned}
\falpha_{\lambda,t,\rho}(\xi):=\ff_{[a,b];\varepsilon_1,\varepsilon_2}(\xi)\quad\mbox{and}\quad
\fbeta_{\lambda,t,\rho}(\xi)
:=(|\falpha_{\lambda,t,\rho}(\lambda^{-2}\xi)|^2-|\falpha_{\lambda,t,\rho}(\xi)|^2)^{1/2},\;\;\xi\in\R,
\end{aligned}
\end{equation}
where
$[a,b]=[-\lambda^{-2}(1-t/2)\rho\pi,\lambda^{-2}(1-t/2)\rho\pi]$ and
$\varepsilon_1= \varepsilon_2=\lambda^{-2}t\rho\pi/2$. We have
$\supp\falpha_{\lambda,t,\rho}=[-\lambda^{-2}\rho\pi,\lambda^{-2}\rho\pi]$
and
$\supp\fbeta_{\lambda,t,\rho}=[-\rho\pi,-\lambda^{-2}(1-t)\rho\pi]\cup[\lambda^{-2}(1-t)\rho\pi,\rho\pi]$.
Furthermore, define a $2\pi$-periodic function
$\fmu_{\lambda,t,\rho}$ and $\fupsilon_{\lambda,t,\rho}$ as follows:
\begin{equation}\label{def:fmask}
\begin{aligned}
\fmu_{\lambda,t,\rho}(\xi)&:=
\begin{cases}
\frac{\falpha_{\lambda,t,\rho}(\lambda^2\xi)}{\falpha_{\lambda,t,\rho}(\xi)} &|\xi|\le\lambda^{-2}\rho\pi\\
0 & \lambda^{-2}\rho\pi<|\xi|\le \pi
\end{cases},\\
\fupsilon_{\lambda,t,\rho}(\xi)&:=
\begin{cases}
\frac{\fbeta_{\lambda,t,\rho}(\lambda^2\xi)}{\falpha_{\lambda,t,\rho}(\xi)} &\lambda^{-4}(1-t)\rho\pi\le|\xi|\le\lambda^{-2}\rho\pi\\
\fg_{\lambda, t, \rho}(\xi) & \xi\in[-\pi,\pi]\backslash\supp\fbeta_{\lambda,t,\rho}(\lambda^2\cdot),
\end{cases}
\end{aligned}
\end{equation}
where $\fg_{\lambda, t, \rho}$ is a function in $C^\infty(\T)$ such
that $\big[\frac{d^n}{d\xi^n}\fg_{\lambda, t,
\rho}(\xi)\big]\Big|_{\xi=\pm\lambda^{-2}\rho\pi} = \delta(n)$ for
all $n\in\N_0$. The purpose of $\fg_{\lambda,t,\rho}$ is to make the
function $\fupsilon_{\lambda,t,\rho}$ smooth. Such a $\fg_{\lambda,
t, \rho}$ exists. In fact, noting that
$\frac{\fbeta_{\lambda,t,\rho}(\lambda^2\xi)}{\falpha_{\lambda,t,\rho}(\xi)}
\equiv 1$ for $|\xi|\ge\lambda^{-4}\rho\pi$ and
$\frac{\fbeta_{\lambda,t,\rho}(\lambda^2\xi)}{\falpha_{\lambda,t,\rho}(\xi)}
\equiv 0$ for $|\xi|\le\lambda^{-4}(1-t)\rho\pi$, we can simply
define $\fg_{\lambda,t,\rho}$ to be $\fg_{\lambda, t,
\rho}(\xi):\equiv1 $ for $ \lambda^{-4}\rho\pi\le |\xi|\le\pi$  and
$\fg_{\lambda, t, \rho}(\xi) \equiv 0$ for $|\xi|\le
\lambda^{-4}(1-t)\rho\pi$. In this case, $\fg_{\lambda,t,\rho}$
extends periodically as a constant 1 near the boundary of $\T$. If
$\lambda^{-2}\rho<1$, then another way to make
$\fupsilon_{\lambda,t,\rho}(\xi)$ smooth is by defining
$\fg_{\lambda,t,\rho}$ to be $\fg_{\lambda, t, \rho}(\xi):\equiv1 $
for $\lambda^{-4}\rho\pi\le|\xi|\le\lambda^{-2}\rho\pi $,  and
$\fg_{\lambda, t, \rho}(\xi) \equiv 0$ for $|\xi|\le
\lambda^{-4}(1-t)\rho\pi$ or $\lambda^{-2}\rho_0\pi\le|\xi|\le\pi$
with $\rho_0$ being a positive constant such that
$\lambda^{-2}\rho<\lambda^{-2}\rho_0<1$, which can be achieved by
using smoothing kernel.
We have the following result.
\begin{proposition}
Let $\lambda>1$, $0<t\le 1$, and $0<\rho\le\lambda^2$. Let
$\falpha_{\lambda,t,\rho}$, $\fbeta_{\lambda,t,\rho}$, and
$\fmu_{\lambda,t,\rho}, \fupsilon_{\lambda,t,\rho}$ be defined as in
\eqref{def:falphafbeta} and \eqref{def:fmask}, respectively. Then
$\falpha_{\lambda,t,\rho}, \fbeta_{\lambda,t,\rho}\in C^\infty(\R)$
and $\fmu_{\lambda,t,\rho}, \fupsilon_{\lambda,t,\rho}\in
C^\infty(\T)$. Moreover,
\[
|\falpha_{\lambda,t,\rho}(\xi)|^2+|\fbeta_{\lambda,t,\rho}(\xi)|^2 = |\falpha_{\lambda,t,\rho}(\lambda^{-2}\xi)|^2, \quad \xi\in\R,
\]
and
\[
\falpha_{\lambda,t,\rho}(\lambda^2\xi) = \fmu_{\lambda,t,\rho}(\xi)
\falpha_{\lambda,t,\rho}(\xi), \quad
\fbeta_{\lambda,t,\rho}(\lambda^2\xi) =
\fupsilon_{\lambda,t,\rho}(\xi) \falpha_{\lambda,t,\rho}(\xi), \quad
\xi\in\R.
\]
\end{proposition}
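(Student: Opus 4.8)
\emph{Plan.} Throughout write $\falpha=\falpha_{\lambda,t,\rho}$, $\fbeta=\fbeta_{\lambda,t,\rho}$, set $\varepsilon:=\varepsilon_1=\varepsilon_2=\lambda^{-2}t\rho\pi/2$ and $b:=\lambda^{-2}(1-t/2)\rho\pi$, and abbreviate $\Delta(\xi):=|\falpha(\lambda^{-2}\xi)|^2-|\falpha(\xi)|^2$. I would prove the four assertions in the order: smoothness of $\falpha$; well-definedness and smoothness of $\fbeta$ together with the quadratic identity; and finally smoothness of the masks $\fmu_{\lambda,t,\rho},\fupsilon_{\lambda,t,\rho}$ with the two refinement relations. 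The easy part comes first. Since $\falpha$ is exactly the building block $\ff_{[a,b];\varepsilon_1,\varepsilon_2}$ of \eqref{def:fmu}, it is $C^\infty(\R)$ by construction; it is even, nonnegative, equals $1$ on $[-\lambda^{-2}(1-t)\rho\pi,\lambda^{-2}(1-t)\rho\pi]$, and is nonincreasing in $|\xi|$ because $\nu$ is nondecreasing. As $\lambda>1$ gives $\lambda^{-2}|\xi|\le|\xi|$, monotonicity yields $\falpha(\lambda^{-2}\xi)\ge\falpha(\xi)\ge0$, hence $\Delta\ge0$ everywhere. Thus $\fbeta=\sqrt{\Delta}$ is a well-defined, real, nonnegative function, and squaring its definition \eqref{def:falphafbeta} immediately gives $|\falpha(\xi)|^2+|\fbeta(\xi)|^2=|\falpha(\lambda^{-2}\xi)|^2$.

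The heart of the matter, and where I expect the real difficulty, is the smoothness of $\fbeta$. Away from the zero set of $\Delta$, $\sqrt{\Delta}$ is a composition of smooth maps and is automatically $C^\infty$; the issue is precisely the points where $\Delta$ touches $0$. The clean tool is the quadrature identity $|\nu(x)|^2+|\nu(-x)|^2=1$, which I would apply to $\falpha$ itself: since $\falpha(\xi)=\nu((b-|\xi|)/\varepsilon)$ in the decay region and equals $1$ elsewhere, the identity gives the \emph{global} representation $|\falpha(\xi)|^2=1-|\nu((|\xi|-b)/\varepsilon)|^2$, valid for all $\xi$, and hence
\[
\Delta(\xi)=\big|\nu(\tfrac{|\xi|-b}{\varepsilon})\big|^2-\big|\nu(\tfrac{\lambda^{-2}|\xi|-b}{\varepsilon})\big|^2,\qquad \xi\in\R.
\]
This representation resolves the two support boundaries at once. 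Near the inner boundary $|\xi|=\lambda^{-2}(1-t)\rho\pi=b-\varepsilon$ the second $\nu$-term vanishes, so $\fbeta=\nu((|\xi|-b)/\varepsilon)$, smooth and flat toward the gap; near the outer boundary $|\xi|=\rho\pi$ the first $\nu$-term equals $1$ and the identity rewrites $\fbeta=\falpha(\lambda^{-2}\cdot)$, again smooth. On the intermediate interval where both $\nu$-terms lie in $(0,1)$, strict monotonicity of $\nu$ forces $\Delta>0$, so $\sqrt{\Delta}$ is smooth there directly.

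The genuinely delicate case is $t=1$, where the central gap of $\supp\fbeta$ degenerates to the single point $\xi=0$, at which $\fbeta$ has an \emph{interior} zero; this is the main obstacle. Here $b=\varepsilon$, so in the displayed formula both arguments equal $-1$ at $\xi=0$, and because $\nu$ is constant to the left of $-1$ each $\nu$-term, viewed as a function of $\xi$, is flat at the origin (all one-sided derivatives vanish). Consequently $\Delta$ vanishes to infinite order at $0$ while remaining strictly positive for $0<|\xi|$ small; I would then argue that this infinite-order flatness, together with the elementary fact that the even extension of a function all of whose derivatives vanish at $0$ is $C^\infty$, forces $\fbeta=\sqrt{\Delta}$ to be flat and hence smooth at $0$ as well. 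This finishes $\fbeta\in C^\infty(\R)$.

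Finally, for the masks I would first substitute $\xi\mapsto\lambda^2\xi$ in the quadratic identity to get $|\fbeta(\lambda^2\xi)|^2=|\falpha(\xi)|^2-|\falpha(\lambda^2\xi)|^2$. The refinement relations $\falpha(\lambda^2\cdot)=\fmu_{\lambda,t,\rho}\,\falpha$ and $\fbeta(\lambda^2\cdot)=\fupsilon_{\lambda,t,\rho}\,\falpha$ then hold by the very definition \eqref{def:fmask} of $\fmu_{\lambda,t,\rho},\fupsilon_{\lambda,t,\rho}$ on $\{\falpha>0\}$, and extend to $\{\falpha=0\}$ because both sides vanish there; one checks, using $\rho\le\lambda^2$, that $\supp\falpha(\lambda^2\cdot)\subseteq\{|\xi|\le\lambda^{-4}\rho\pi\}$ and $\supp\fbeta(\lambda^2\cdot)\subseteq\{\lambda^{-4}(1-t)\rho\pi\le|\xi|\le\lambda^{-2}\rho\pi\}$, both inside $\{\falpha>0\}$ up to their boundaries. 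Smoothness of $\fmu_{\lambda,t,\rho}$ is then routine: its numerator is supported where $\falpha>0$, so the quotient is smooth, and it is identically $0$ near $|\xi|=\lambda^{-2}\rho\pi$, matching the periodic extension. For $\fupsilon_{\lambda,t,\rho}$ the only nontrivial point is $|\xi|=\lambda^{-2}\rho\pi$, where the quotient is formally $0/0$; but near there $\falpha(\lambda^2\xi)=0$ forces $\fbeta(\lambda^2\xi)=\falpha(\xi)$, so the quotient is \emph{exactly} $1$ on $[\lambda^{-4}\rho\pi,\lambda^{-2}\rho\pi]$, which is precisely the value and the (flat) derivative data prescribed for $\fg_{\lambda,t,\rho}$ at $\pm\lambda^{-2}\rho\pi$; combined with the infinite-order vanishing of $\fbeta$ at its inner support boundary (again from $\nu$ being constant past $\pm1$), this yields $\fupsilon_{\lambda,t,\rho}\in C^\infty(\T)$ and completes the proof.
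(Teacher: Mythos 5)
Your proof is correct, but it is organized quite differently from the paper's. The paper proves smoothness of $\fbeta_{\lambda,t,\rho}$ (and of $\fmu_{\lambda,t,\rho},\fupsilon_{\lambda,t,\rho}$) by splitting into the two cases $1-t\ge\lambda^{-2}$ and $0\le 1-t<\lambda^{-2}$ and writing fully explicit piecewise formulas: in the first case $\fbeta_{\lambda,t,\rho}$ is literally a sum of two disjointly supported building blocks $\ff_{[a,b];\varepsilon_1,\varepsilon_2}(\pm\cdot)$, and in the second case it is $\sqrt{\widetilde\nu}$ with $\widetilde\nu>0$ on the open support and all derivatives of $\widetilde\nu$ vanishing at the support boundary. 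You instead exploit the quadrature identity $|\nu(x)|^2+|\nu(-x)|^2=1$ to obtain a single global representation of $\Delta=|\falpha_{\lambda,t,\rho}(\lambda^{-2}\cdot)|^2-|\falpha_{\lambda,t,\rho}|^2$ as a difference of two squared $\nu$-terms, which lets you avoid the case split on $1-t$ entirely: you localize to the zero set of $\Delta$, where you produce square-root-free formulas ($\fbeta_{\lambda,t,\rho}=\nu((|\xi|-b)/\varepsilon)$ near the inner boundary, $\fbeta_{\lambda,t,\rho}=\falpha_{\lambda,t,\rho}(\lambda^{-2}\cdot)$ near the outer one), and use strict monotonicity of $\nu$ on $(-1,1)$ to get $\Delta>0$ in between. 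This buys a cleaner structure and isolates the genuinely delicate point to the single configuration $t=1$, $\xi=0$; the paper's argument, by contrast, invokes the square-root-of-a-flat-function step at all boundary points in the case $1-t<\lambda^{-2}$. The treatment of the masks (quotient smooth where $\falpha_{\lambda,t,\rho}$ is bounded below, quotient identically $1$ on $[\lambda^{-4}\rho\pi,\lambda^{-2}\rho\pi]$ matching the prescribed data for $\fg_{\lambda,t,\rho}$, flat numerators at the remaining boundaries) coincides in substance with the paper's.

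Two small cautions. First, your argument leans on $\nu$ being nonnegative and (strictly) increasing on $(-1,1)$; these hold for the explicit $\nu$ of \eqref{def:nu} but are not part of the stated axioms of $\nu$, so you should say you are working with that choice (the paper's explicit formulas make the same tacit assumption, and indeed $\Delta\ge0$ already requires it). Second, the inference ``$\Delta$ is $C^\infty$, nonnegative, and vanishes to infinite order at $0$, hence $\sqrt{\Delta}$ is flat and smooth at $0$'' is not a general fact about nonnegative smooth functions and deserves justification (one must control the derivatives of $\sqrt{\Delta}$ on $\{\Delta>0\}$ as $\xi\to 0$, e.g.\ via the explicit exponential decay rates of the two $\nu^2$-terms); this is exactly the step the paper itself passes over with the phrase ``using the Taylor expansion for $\fbeta_{\lambda,t,\rho}=\sqrt{\widetilde\nu}$,'' so it is not a gap relative to the paper, but in your write-up it is the one assertion that is stated rather than proved.
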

\begin{proof}Explicitly, we have
\begin{equation}\label{def:falpha}
\begin{aligned}
\falpha_{\lambda,t,\rho}(\xi)&=
\begin{cases}
1& \mbox{if } |\xi|\leq \lambda^{-2}(1-t)\rho\pi;\\
\nu(\frac{-2\lambda^2|\xi|+(2-t)\rho\pi}{t\rho \pi})& \mbox{if } \lambda^{-2}(1-t)\rho\pi<|\xi|\le\lambda^{-2}\rho\pi;\\
0 & \mbox{otherwise}.
\end{cases}
\end{aligned}
\end{equation}
Hence, by the smoothness of $\nu$, we have
$\falpha_{\lambda,t,\rho}\in C^\infty(\R)$.

If $1-t\ge \lambda^{-2}$, by the definition,
$\fbeta_{\lambda,t,\rho}$ can be written as
$\fbeta_{\lambda,t,\rho}(\xi)=\ff_{[a,b],\varepsilon_1,\varepsilon_2}(\xi)
+\ff_{[a,b],\varepsilon_1,\varepsilon_2}(-\xi)$ with
$[a,b]=[\lambda^{-2}(2-t)\rho\pi/2,(2-t)\rho\pi/2]$ and
$\varepsilon_1=\lambda^{-2}t\rho\pi/2$, $\varepsilon_2=\rho\pi/2$;
that is,
\begin{equation}\label{def:fbeta1}
\begin{aligned}
\fbeta_{\lambda,t,\rho}(\xi)&=
\begin{cases}
\nu(\frac{2\lambda^2|\xi|-(2-t)\rho\pi}{t\rho \pi}) & \mbox{if }  \lambda^{-2}(1-t)\rho\pi\le|\xi|< \lambda^{-2}\rho\pi;\\
1 &\mbox{if } \lambda^{-2}\rho\pi\le|\xi|<(1-t)\rho\pi;\\
\nu(\frac{-2|\xi|+(2-t)\rho\pi}{t\rho \pi}) &\mbox{if }  (1-t)\rho\pi\le |\xi|\le \rho\pi;\\
0 & \mbox{otherwise.}
\end{cases}
\end{aligned}
\end{equation}
Again, by the smoothness of $\nu$, we have  $\fbeta_{\lambda,t,\rho}\in C^\infty(\R)$.

 If $0\le1-t<\lambda^{-2}$, then
$\fbeta_{\lambda,t,\rho}$ is given by
\begin{equation}\label{def:fbeta2}
\begin{aligned}
\fbeta_{\lambda,t,\rho}(\xi)&=
\begin{cases}
[(\nu(\frac{-2|\xi|+(2-t)\rho\pi}{t \rho\pi}))^2-(\nu(\frac{-2\lambda^2|\xi|+(2-t)\rho\pi}{t \rho\pi}))^2]^{1/2}&
 \mbox{if }  \lambda^{-2}(1-t)\rho\pi\le|\xi|\le\rho\pi;\\
0 & \mbox{otherwise.}
\end{cases}
\end{aligned}
\end{equation}
Note that $\widetilde\nu(\xi):=(\nu(\frac{-2|\xi|+(2-t)\rho\pi}{t
\rho\pi}))^2-(\nu(\frac{-2\lambda^2|\xi|+(2-t)\rho\pi}{t
\rho\pi}))^2>0$ for all $\xi$ such that
$|\xi|\in(\lambda^{-2}(1-t)\rho\pi,\rho\pi)$. Hence,
$\fbeta_{\lambda,t,\rho}(\xi)=\sqrt{\widetilde\nu(\xi)}$ is
infinitely differentiable for all
$|\xi|\in(\lambda^{-2}(1-t)\rho\pi,\rho\pi)$. For all other $\xi$
such that $|\xi|\notin(\lambda^{-2}(1-t)\rho\pi,\rho\pi)$, all the
derivatives of $\widetilde\nu(\xi)$ vanish. Then, using the Taylor
expansion for $\fbeta_{\lambda,t,\rho}=\sqrt{\widetilde\nu}$, we see
that all the derivatives of $\fbeta_{\lambda,t,\rho}$ vanish for all
$|\xi|\notin(\lambda^{-2}(1-t)\rho\pi,\rho\pi)$. Hence,
$\fbeta_{\lambda,t,\rho}\in C^\infty(\R)$.

Therefore, $\falpha_{\lambda,t,\rho}, \fbeta_{\lambda,t,\rho}\in
C^\infty(\R)$. By the definition of $\fbeta_{\lambda,t,\rho}$, we
have
$|\falpha_{\lambda,t,\rho}(\xi)|^2+|\fbeta_{\lambda,t,\rho}(\xi)|^2
= |\falpha_{\lambda,t,\rho}(\lambda^{-2}\xi)|^2$ for all $\xi\in\R$.

Similar to the cases of $\beta_{\lambda,t,\rho}$, if $1-t\ge
\lambda^{-2}$, then we have
\begin{equation}\label{def:fmask1}
\begin{aligned}
\fmu_{\lambda,t,\rho}(\xi)&=
\begin{cases}
\falpha_{\lambda,t,\rho}(\lambda^2\xi) &  |\xi|\le \lambda^{-4}\rho\pi\\
0 & \lambda^{-4}\rho\pi< |\xi|\le\pi
\end{cases},\\
\fupsilon_{\lambda,t,\rho}(\xi)&=
\begin{cases}
\nu(\frac{2\lambda^4|\xi|-(2-t)\rho\pi}{t\rho \pi}) & \lambda^{-4}(1-t)\rho\pi\le |\xi|\le \lambda^{-2}\rho\pi\\
\fg_{\lambda,t,\rho}(\xi)&  \xi\in[-\pi,\pi]\backslash\supp\fbeta_{\lambda,t,\rho}(\lambda^2\cdot).
\end{cases}
\end{aligned}
\end{equation}
In this case, obviously, $\fmu_{\lambda,t,\rho}\in C^\infty(\T)$.
Note that
$\big[\frac{d^n}{d\xi^n}\nu(\frac{-2\lambda^4|\xi|+(2-t)\rho\pi}{t\rho
\pi})\big]\Big|_{\xi=\pm\lambda^{-2}\rho\pi}=\delta(n)$.  By our
choice of $\fg_{\lambda,t,\rho}$, we see that
$\fupsilon_{\lambda,t,\rho}\in C^\infty(\T)$.

If $0\le 1-t< \lambda^{-2}$, then we have
\begin{equation}\label{def:fmask2}
\begin{aligned}
\fmu_{\lambda,t,\rho}(\xi)&=
\begin{cases}
\frac{\nu(\frac{-2\lambda^4|\xi|+(2-t)\rho\pi}{t \rho\pi})}{\nu(\frac{-2\lambda^2|\xi|+(2-t)\rho\pi}{t\rho \pi})}& |\xi|\le \lambda^{-4}\rho\pi\\
0 & \lambda^{-4}\rho\pi< |\xi|\le\pi
\end{cases}
\end{aligned}
\end{equation}
and
\begin{equation}\label{def:fmask3}
\begin{aligned}
\fupsilon_{\lambda,t,\rho}(\xi)&=
\begin{cases}
\frac{[(\nu(\frac{-2\lambda^2|\xi|+(2-t)\rho\pi}{t \rho\pi}))^2-(\nu(\frac{-2\lambda^4|\xi|+(2-t)\rho\pi}{t \rho\pi}))^2]^{1/2}}{\nu(\frac{-2\lambda^2|\xi|+(2-t)\rho\pi}{t\rho \pi})}&
 \lambda^{-4}(1-t)\rho\pi\le|\xi|\le\lambda^{-2}\rho\pi\\
\fg_{\lambda,t,\rho}(\xi)&  \xi\in[-\pi,\pi]\backslash\supp\fbeta_{\lambda,t,\rho}(\lambda^2\cdot).
\end{cases}
\end{aligned}
\end{equation}
Note that the function
$\nu(\frac{-2\lambda^2|\xi|+(2-t)\rho\pi}{t\rho \pi})$ is strictly
positive for all $\xi$ such that $|\xi|\le \lambda^{-4}\rho\pi$.
Hence, $\frac{1}{\nu(\frac{-2\lambda^2|\xi|+(2-t)\rho\pi}{t\rho
\pi})}$ is infinitely differentiable  for all $\xi$ such that
$|\xi|\le \lambda^{-4}\rho\pi$. Since $\nu$ is $C^\infty$,  the
product of $\nu(\frac{-2\lambda^4|\xi|+(2-t)\rho\pi}{t \rho\pi})$
and $\frac{1}{\nu(\frac{-2\lambda^2|\xi|+(2-t)\rho\pi}{t\rho \pi})}$
is infinitely differentiable  for all $\xi$ such that $|\xi|\le
\lambda^{-4}\rho\pi$.  For $\xi$ such that $
\lambda^{-4}\rho\pi\le|\xi|\le \pi$,  all the derivatives of
$\fmu_{\lambda,t,\rho}(\xi)$ vanish.  Consequently,
$\fmu_{\lambda,t,\rho}\in C^\infty(\T)$. Observe that
$[(\nu(\frac{-2\lambda^2|\xi|+(2-t)\rho\pi}{t
\rho\pi}))^2-(\nu(\frac{-2\lambda^4|\xi|+(2-t)\rho\pi}{t
\rho\pi}))^2]^{1/2}=\nu(\frac{-2\lambda^2|\xi|+(2-t)\rho\pi}{t
\rho\pi}))$ for
$\lambda^{-2}(1-t)\rho\pi\le|\xi|\le\lambda^{-2}\rho\pi$. By similar
arguments, we conclude that $\fupsilon_{\lambda,t,\rho}\in
C^\infty(\T)$.

Therefore, $\fmu_{\lambda,t,\rho}, \fupsilon_{\lambda,t,\rho}\in
C^\infty(\T)$. By their constructions, it is easy to check that
$\falpha_{\lambda,t,\rho}(\lambda^2\xi) = \fmu_{\lambda,t,\rho}(\xi)
\falpha_{\lambda,t,\rho}(\xi)$ and
$\fbeta_{\lambda,t,\rho}(\lambda^2\xi) =
\fupsilon_{\lambda,t,\rho}(\xi) \falpha_{\lambda,t,\rho}(\xi)$ for
$\xi\in\R$.
\end{proof}

The functions $\falpha_{\lambda,t,\rho}$ and
$\fbeta_{\lambda,t,\rho}$ shall be used for the horizontal
direction. We next define $\fgamma_\varepsilon$  for splitting
pieces along the vertical direction. In what follows, $\varepsilon$
shall be fixed as a constant such that $0< \varepsilon\le1/2$.
Define a function $\fgamma_{\varepsilon}$ to be
$\fgamma_{\varepsilon}:=\ff_{[-1/2,1/2],\varepsilon,\varepsilon}$;
that is,
\begin{equation}\label{def:gamma}
\fgamma_\varepsilon( x) =
\begin{cases}
1 & \mbox{if }|x|\le   1/2-\varepsilon;\\
{\nu(\frac{-|x|+1/2}{\varepsilon})} & \mbox{if } 1/2-\varepsilon \le |x|\le 1/2+\varepsilon;\\
0 & \mbox{otherwise}.
\end{cases}
\end{equation}
Then it is easy to check that $\gamma_\varepsilon\in C^\infty(\R)$ and $\sum_{\ell\in\Z}|\fgamma_\varepsilon(\cdot+ \ell)|^2 \equiv 1$.

For $\lambda\in\R$, define $\ell_\lambda:=\lfloor
\lambda-(1/2+\varepsilon)\rfloor+1=\lfloor
\lambda+(1/2-\varepsilon)\rfloor$.
Define the corner pieces
$\gamma_{\lambda,\varepsilon,\varepsilon_0}^\pm$ by
\begin{equation}\label{def:gamma:corner-smooth0}
\begin{aligned}
\fgamma_{\lambda,\varepsilon,\varepsilon_0}^+(\lambda x-\ell_\lambda) &:=
\begin{cases}
\fgamma_{\varepsilon}(\lambda x-\ell_\lambda)& \lambda^{-1}(\ell_{\lambda}-1/2-\varepsilon)\le x\le\lambda^{-1}(\ell_\lambda -1/2+\varepsilon)\\
\nu(1+\frac{\lambda^2}{\varepsilon_0}(1-x))& \lambda^{-1}(\ell_\lambda -1/2+\varepsilon)\le x \le 1+\frac{2\varepsilon_0}{\lambda^2},\\
\end{cases}
\\
\fgamma_{\lambda,\varepsilon,\varepsilon_0}^-(\lambda x+\ell_\lambda) &:=
\begin{cases}
\fgamma_{\varepsilon}(\lambda x+\ell_\lambda)& \lambda^{-1}(-\ell_{\lambda}+1/2-\varepsilon)\le x\le\lambda^{-1}(-\ell_\lambda +1/2+\varepsilon)\\
\nu(1+\frac{\lambda^2}{\varepsilon_0}(1+x))& -1-\frac{2\varepsilon_0}{\lambda^2}\le x\le \lambda^{-1}(-\ell_\lambda +1/2+\varepsilon).\\
\end{cases}
\end{aligned}
\end{equation}
That is,
\begin{equation}\label{def:gamma:corner-smooth}
\begin{aligned}
\fgamma_{\lambda,\varepsilon,\varepsilon_0}^+(x) &=
\begin{cases}
\fgamma_{\varepsilon}(x)& -1/2-\varepsilon\le x\le -1/2+\varepsilon\\
\nu(1+\frac{\lambda^2}{\varepsilon_0}-\frac{\lambda}{\varepsilon_0}(x+\ell_\lambda))& -1/2+\varepsilon\le x \le \lambda(1+2\varepsilon_0/\lambda^2)-\ell_\lambda,\\
\end{cases}
\\
\fgamma_{\lambda,\varepsilon,\varepsilon_0}^-(x) &=
\begin{cases}
\fgamma_{\varepsilon}(x)& 1/2-\varepsilon\le x\le 1/2+\varepsilon\\
\nu(1+\frac{\lambda^2}{\varepsilon_0}+\frac{\lambda}{\varepsilon_0}(x-\ell_\lambda))& -\lambda(1+\varepsilon_0/\lambda^2)+\ell_\lambda\le x \le1/2-\varepsilon.\\
\end{cases}
\end{aligned}
\end{equation}
Note that $\gamma_{\lambda,\varepsilon,\varepsilon_0}^\pm$ are
$C^\infty$ functions. Then, for $\lambda\ge 1$,
\begin{equation}\label{eq:gamma:non-station}
\sum_{\ell=-\ell_\lambda+1}^{\ell_\lambda-1}|\fgamma_\varepsilon(\lambda x+ \ell)|^2+|\fgamma_{\lambda,\varepsilon,\varepsilon_0}^\pm(\lambda x\mp \ell_\lambda)|^2 = 1\quad \forall |x|\le 1
\end{equation}
and
\begin{equation}\label{eq:gamma:station}
\sum_{\ell=-\ell_\lambda}^{\ell_\lambda}|\fgamma_\varepsilon(\lambda x+ \ell)|^2 = 1 \quad \forall |x|\le \frac{\ell_{\lambda}+1/2-\varepsilon}{\lambda}.
\end{equation}

Equations  \eqref{eq:gamma:non-station} and \eqref{eq:gamma:station}
will be used to construct two types of smooth frequency-based affine
shear tight frames. One is non-stationary construction with
$\fphi^j$ changing at different scales and the other is
quasi-stationary construction, in which case the  function $\fphi$
is fixed. We next discuss the details of these two types of
constructions.

\subsection{Non-stationary construction}
We first discuss the non-stationary construction. For such a type of
construction, the shear operations could reach arbitrarily close to
the seamlines when $j$ goes to infinity. The idea of constructing
such a smooth frequency-based  affine shear tight frame in the
non-stationary setting is simple. We first construct a
 frequency-based  affine shear frame from only a few generators and then apply normalization to   such a frame to obtain a tight frame.

More precisely, let $\lambda>1$, $0<t,\rho\le1$, and
$0<\varepsilon\le 1/2$. Let $\da_\lambda, \db_\lambda, \dm_\lambda,
\dn_\lambda, \falpha_{\lambda,t,\rho}$, $\fbeta_{\lambda,t,\rho}$,
and $\fgamma_\varepsilon$,
$\fgamma_{\lambda,\varepsilon,\varepsilon_0}$, $\ell_\lambda$ be
defined as before. Define
\begin{equation}\label{def:fetaZeta}
\begin{aligned}
\feta(\xi_1,\xi_2)&:=\falpha_{\lambda,t,\rho}(\xi_1)\fgamma_{\varepsilon}(\xi_2/\xi_1),\quad (\xi_1,\xi_2)\in\R^2,\\
\fzeta(\xi_1,\xi_2)&:=\fbeta_{\lambda,t,\rho}(\xi_1)\fgamma_{\varepsilon}(\xi_2/\xi_1),\quad
(\xi_1,\xi_2)\in\R^2\backslash\{0\},
\end{aligned}
\end{equation}
as well as the corner peices
\begin{equation}\label{def:fetaZetaCorner}
\begin{aligned}
\feta^{j,\pm\ell_{\lambda^j}}(\xi_1,\xi_2)&:=\falpha_{\lambda,t,\rho}(\xi_1)\fgamma_{\lambda^j,\varepsilon,\varepsilon_0}^\mp(\xi_2/\xi_1),\quad
(\xi_1,\xi_2)\in\R^2,\\
\fzeta^{j,\pm\ell_{\lambda^j}}(\xi_1,\xi_2)&:=\fbeta_{\lambda,t,\rho}(\xi_1)\fgamma_{\lambda^j,\varepsilon,\varepsilon_0}^\mp(\xi_2/\xi_1),\quad
(\xi_1,\xi_2)\in\R^2\backslash\{0\}.
\end{aligned}
\end{equation}
For $\xi=0$, $\fzeta(0):=0$ and
$\fzeta^{j,\pm\ell_{\lambda^j}}(0):=0$. Since the support of
$\fbeta_{\lambda,t,\rho}$ is away from the origin, we have $\fzeta,
\fzeta^{j,\pm\ell_{\lambda^j}}\in C^\infty(\R^2)$. Let
$\fphi(\xi):=\falpha_{\lambda,t,\rho}(\xi_1)\falpha_{\lambda,t,\rho}(\xi_2)$,
$\xi\in\R^2$. Then, $\fphi$ is also a function in $C^\infty(\R^2)$.
 For a nonnegative integer $J_0$, define
\begin{equation}\label{def:fTheta}
\begin{aligned}
\fTheta^{J_0}(\xi)
:=&|\fphi(\dn_\lambda^{J_0}\xi)|^2+\sum_{j=J_0}^\infty\sum_{\ell=-\ell_{\lambda^j}}^{\ell_{\lambda^j}}(|\fzeta^{j,\ell}(\sh_\ell\db_\lambda^j\xi)|^2+|\fzeta^{j,\ell}(\sh_\ell\db_\lambda^j\exchg\xi)|^2)
\end{aligned}
\end{equation}
for $\xi\in\R^2$, where $\feta^{j,\ell}:=\feta$ and
$\fzeta^{j,\ell}:=\fzeta$ for $|\ell|<\ell_{\lambda^j}$.
We have the following result concerning the function $\fTheta^{J_0}$.
\begin{proposition}\label{prop:fTheta}
Let $\lambda>1, 0<\varepsilon\le1/2, 0<t, \rho\le1$. Let $J_0$ be a
nonnegative integer and $\fTheta^{J_0}$ be defined as in
\eqref{def:fTheta}. Choose $\varepsilon_0>0$ such that
$\varepsilon_0< \frac12\lambda^{J_0-1}$.
 Then $\fTheta^{J_0}$ has the following properties.
\begin{itemize}
\item[\rm{(i)}]$\fTheta^{J_0}\in C^\infty(\R^2)$, $\fTheta^{J_0}=\fTheta^{J_0}(\exchg\cdot)$, and $0<\fTheta^{J_0} \le 2$.
\item[\rm{(ii)}] $
\fTheta^{J_0}(\xi)=\fTheta^{J_0}(\exchg\xi)= 1 \quad\forall
\xi\in\cup_{j=J_0+1}^\infty\cup_{\ell=-\ell_{\lambda^j}+2}^{\ell_{\lambda^j}-2}\supp\fzeta^{j,\ell}(\sh_\ell\db_\lambda^j\cdot)$.
\end{itemize}
\end{proposition}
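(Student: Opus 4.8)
The plan is to reduce both parts to two ingredients already available: the radial telescoping identity $|\fbeta_{\lambda,t,\rho}(\eta)|^2=|\falpha_{\lambda,t,\rho}(\lambda^{-2}\eta)|^2-|\falpha_{\lambda,t,\rho}(\eta)|^2$ from the preceding proposition, and the angular partition of unity \eqref{eq:gamma:non-station}. First I would put each scale-$j$ block of \eqref{def:fTheta} into separated form. Since $\sh_\ell\db_\lambda^j\xi=(\lambda^{-2j}\xi_1,\ \ell\lambda^{-2j}\xi_1+\lambda^{-j}\xi_2)$ and every generator $\fzeta^{j,\ell}$ carries the same radial factor $\fbeta_{\lambda,t,\rho}(\cdot_1)$ (the inner ones with $\fgamma_\varepsilon(\cdot_2/\cdot_1)$, the corners with $\fgamma^{\mp}_{\lambda^j,\varepsilon,\varepsilon_0}$), one gets
\[
\sum_{\ell=-\ell_{\lambda^j}}^{\ell_{\lambda^j}}|\fzeta^{j,\ell}(\sh_\ell\db_\lambda^j\xi)|^2=|\fbeta_{\lambda,t,\rho}(\lambda^{-2j}\xi_1)|^2\,\Gamma_j(\xi_2/\xi_1),
\]
where $\Gamma_j(x):=\sum_{|\ell|<\ell_{\lambda^j}}|\fgamma_\varepsilon(\lambda^jx+\ell)|^2+|\fgamma^{+}_{\lambda^j,\varepsilon,\varepsilon_0}(\lambda^jx-\ell_{\lambda^j})|^2+|\fgamma^{-}_{\lambda^j,\varepsilon,\varepsilon_0}(\lambda^jx+\ell_{\lambda^j})|^2$. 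By \eqref{eq:gamma:non-station}, $\Gamma_j\equiv1$ on $[-1,1]$ and $0\le\Gamma_j\le1$ everywhere; the flipped block is identical with $\xi_1,\xi_2$ interchanged.

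For part (i): each summand is $C^\infty$ because $\fzeta,\fzeta^{j,\pm\ell_{\lambda^j}},\fphi\in C^\infty$ and $C^\infty$ is preserved under composition with the linear maps $\sh_\ell\db_\lambda^j$ and $\exchg$, and the series is locally finite since the radial band $\{|\xi_1|\in[\lambda^{2j-2}(1-t)\rho\pi,\lambda^{2j}\rho\pi]\}$ of the $j$th horizontal block (and the analogous $\xi_2$-band of its flip) escapes to infinity with $j$; hence $\fTheta^{J_0}\in C^\infty(\R^2)$. The identity $\fTheta^{J_0}=\fTheta^{J_0}(\exchg\cdot)$ holds because $\fphi$ is symmetric in its two arguments and $\exchg$ merely interchanges the horizontal and flipped sums (using $\exchg^2=\Id$). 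For the bounds I would use the telescoping $\sum_{j=J_0}^\infty|\fbeta_{\lambda,t,\rho}(\lambda^{-2j}\xi_1)|^2=1-|\falpha_{\lambda,t,\rho}(\lambda^{-2J_0}\xi_1)|^2$ (from the displayed identity and $\falpha_{\lambda,t,\rho}(0)=1$). Bounding the horizontal part by $\Gamma_j\le1$ gives scaling-plus-horizontal $\le 1-|\falpha_{\lambda,t,\rho}(\lambda^{-2J_0}\xi_1)|^2\bigl(1-|\falpha_{\lambda,t,\rho}(\lambda^{-2J_0}\xi_2)|^2\bigr)\le1$, and the flipped part is likewise $\le1$, so $\fTheta^{J_0}\le2$. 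Positivity follows since every $\xi$ lies in the closed horizontal cone $|\xi_2|\le|\xi_1|$ or its flip; in the former $\Gamma_j\equiv1$, so scaling-plus-horizontal equals $1-|\falpha_{\lambda,t,\rho}(\lambda^{-2J_0}\xi_1)|^2(1-|\falpha_{\lambda,t,\rho}(\lambda^{-2J_0}\xi_2)|^2)>0$, because $|\falpha_{\lambda,t,\rho}(\lambda^{-2J_0}\xi_1)|=1$ forces $|\xi_1|$, hence $|\xi_2|$, into the flat top and thus $\falpha_{\lambda,t,\rho}(\lambda^{-2J_0}\xi_2)=1$.

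For part (ii): fix $\xi\in\supp\fzeta^{j,\ell}(\sh_\ell\db_\lambda^j\cdot)$ with $j\ge J_0+1$ and $|\ell|\le\ell_{\lambda^j}-2$. From the angular support of $\fgamma_\varepsilon$ and $\ell_{\lambda^j}\le\lambda^j+\tfrac12-\varepsilon$ I would first deduce $\lambda^j|\xi_2/\xi_1|\le|\ell|+\tfrac12+\varepsilon\le\lambda^j-1$, i.e.\ $|\xi_2/\xi_1|\le1-\lambda^{-j}<1$, placing $\xi$ strictly inside the horizontal cone; hence every $\Gamma_j(\xi_2/\xi_1)=1$. Next I would argue that all flipped blocks vanish at $\xi$: their supports meet the horizontal cone only within angular distance $O(\varepsilon_0\lambda^{-2j})$ of the seamline at each scale $j$, while the interior margin $1-\lambda^{-j}$ together with $\varepsilon_0<\tfrac12\lambda^{J_0-1}$ (and the radial localisation, which prevents the wide-reach low-$j$ blocks from touching the band $|\xi_1|\sim\lambda^{2j}$) keeps $\xi$ outside all of them. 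Finally the coarse term vanishes because on the radial band $|\xi_1|\ge\lambda^{2j-2}(1-t)\rho\pi\ge\lambda^{2J_0-2}\rho\pi$, so $\lambda^{-2J_0}\xi_1\notin\supp\falpha_{\lambda,t,\rho}=[-\lambda^{-2}\rho\pi,\lambda^{-2}\rho\pi]$ and $\falpha_{\lambda,t,\rho}(\lambda^{-2J_0}\xi_1)=0$. With only the horizontal blocks surviving and $\Gamma_j\equiv1$, the telescoping yields $\fTheta^{J_0}(\xi)=1-|\falpha_{\lambda,t,\rho}(\lambda^{-2J_0}\xi_1)|^2=1$, and the $\exchg$-version is immediate from the symmetry proved in part (i).

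The main obstacle is this last pair of vanishing statements, i.e.\ showing that at an interior-wedge point of scale $j\ge J_0+1$ both the coarse scaling term and \emph{all} opposite-cone blocks (summed over every scale) are identically zero. The angular (opposite-cone) vanishing requires a uniform-in-$j$ comparison of the margin $1-\lambda^{-j}$ with the corner reach $1+2\varepsilon_0\lambda^{-2j}$, which is exactly where the hypothesis $\varepsilon_0<\tfrac12\lambda^{J_0-1}$ is consumed; the radial (coarse-term) vanishing requires the separation $\lambda^{2j-2}(1-t)\rho\pi\ge\lambda^{2J_0-2}\rho\pi$ of the fine bands from $\supp\falpha_{\lambda,t,\rho}(\lambda^{-2J_0}\cdot)$, and verifying this at the lowest fine scale $j=J_0+1$ — where it reads $\lambda^2(1-t)\ge1$ — is the delicate point, being precisely where the interplay between $t$ and $\lambda^{-2}$ enters.
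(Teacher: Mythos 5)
Your part (i) is correct, and your upper bound is actually cleaner than the paper's: the paper passes to an auxiliary function $\widetilde\fTheta^{J_0}$ and the identity $\widetilde\fTheta^{J_0}=1+I+I(\exchg\cdot)-\sqrt{I\,I(\exchg\cdot)}$, whereas your separated form $|\fbeta_{\lambda,t,\rho}(\lambda^{-2j}\xi_1)|^2\,\Gamma_j(\xi_2/\xi_1)$ with $0\le\Gamma_j\le1$ and $\Gamma_j\equiv1$ on $[-1,1]$, combined with the radial telescoping, delivers $0<\fTheta^{J_0}\le2$ directly. One caveat you share with the paper: for $t=1$ the band $\supp\fbeta_{\lambda,1,\rho}(\lambda^{-2j}\cdot)$ is all of $[-\lambda^{2j}\rho\pi,\lambda^{2j}\rho\pi]$, so the series is not locally finite and smoothness must instead be read off from the closed-form limit $1-|\falpha_{\lambda,t,\rho}(\lambda^{-2J_0}\xi_1)|^2$.

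Part (ii) is where the genuine gap lies. Your skeleton is the same as the paper's (interior angular margin forces $\Gamma_{j'}\equiv1$; the opposite-cone corner pieces are killed by comparing the margin $1-\lambda^{-j}$ with the corner reach $1+2\varepsilon_0\lambda^{-2j'}$ using $\varepsilon_0<\tfrac12\lambda^{J_0-1}$; the coarse term is killed by radial separation; then telescope), but the two vanishing claims are only announced, not proved, and the radial one is not derivable from the stated hypotheses. The coarse term vanishes on $\supp\fzeta^{j,\ell}(\sh_\ell\db_\lambda^j\cdot)$ with $j\ge J_0+1$ only if $\lambda^{2j-2}(1-t)\ge\lambda^{2J_0-2}$, which at $j=J_0+1$ reads $\lambda^2(1-t)\ge1$; the proposition only assumes $0<t\le1$, so this fails whenever $t>1-\lambda^{-2}$. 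This is not a removable technicality: for $\lambda=2$, $t=1$, $J_0=0$, the point $\xi=(\rho\pi/8,\epsilon'\rho\pi/8)$ with $\epsilon'>0$ small lies in $\supp\fzeta^{1,0}(\db_\lambda\cdot)$, every vertical-cone term vanishes there, and one computes $\fTheta^{0}(\xi)=\tfrac12+\tfrac12|\falpha_{2,1,\rho}(\epsilon'\rho\pi/8)|^2<1$. So statement (ii) itself requires an additional hypothesis such as $1-t\ge\lambda^{-2}$ — the same assumption that silently underlies the paper's unproved claims that $\supp\fphi(\dn_\lambda^{J_0}\cdot)$ is disjoint from the fine-scale wedges and that a scale-$j$ seamline element only meets opposite-cone elements at scales $j-1,j,j+1$. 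In short: you have correctly isolated the obstruction, but your argument does not overcome it, and as stated it cannot.
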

\begin{proof} Since the generators $\fphi, \fzeta^{j,\ell}$ are compactly supported functions in $C^\infty(\R^2)$ and for any bounded open set $E\subseteq\R^2$,
$\fTheta^{J_0}(\xi)$ is the  summation of finitely many terms from
$\fphi, \fzeta^{j,\ell}$ for all $\xi\in E$,  the function
$\fTheta^{J_0}$ is thus also a function in $C^\infty(\R^2)$. By its
definition, it is obvious that $\fTheta^{J_0}(\exchg\cdot) =
\fTheta^{J_0}$.

For simplicity of presentation, we denote $\chi_{\{|\xi_2/\xi_1|\le
1\}}:=\chi_{\{\xi\in\R^2: |\xi_2/\xi_1|\le1\}}$ and similar notation
applies for others. For $\xi\in\{\xi\in\R^2:
\max\{|\xi_1|,|\xi_2|\}<\lambda^{2J_0}\rho\pi\}$, by the property of
$\fgamma_\varepsilon$ as in \eqref{eq:gamma:non-station}, we have
\[
\begin{aligned}
\fTheta^{J_0}(\xi) &\ge |\fphi(\dn_{\lambda}^{J_0}\xi)|^2 +
\sum_{\ell=-\ell_{\lambda^{J_0}}}^{\ell_{\lambda^{J_0}}}(|\fzeta^{J_0,\ell}(\sh_\ell\db_\lambda^{J_0}\xi)|^2
+|\fzeta^{J_0,\ell}(\sh_\ell\db_\lambda^{J_0}\exchg\xi)|^2)
\\&
\ge
|\falpha_{\lambda,t,\rho}(\lambda^{-2J_0}\xi_1)\falpha_{\lambda,t,\rho}(\lambda^{-2J_0}\xi_2)|^2+
|\fbeta_{\lambda,t,\rho}(\lambda^{-2J_0}\xi_1)|^2\chi_{\{
|\xi_2/\xi_1|\le 1\}}(\xi)\\&\quad+
|\fbeta_{\lambda,t,\rho}(\lambda^{-2J_0}\xi_2)|^2\chi_{\{
|\xi_2/\xi_1|> 1\}}(\xi)
\\&>0,
\end{aligned}
\]
and for
 $\xi\in\{\xi\in\R^2:
\max\{|\xi_1|,|\xi_2|\}>\lambda^{2J_0-2}\rho\pi\}$, we have
\[
\begin{aligned}
\fTheta^{J_0}(\xi) &=\sum_{j=J_0}^\infty
\sum_{\ell=-\ell_{\lambda^{j}}}^{\ell_{\lambda^{j}}}(|\fzeta^{j,\ell}(\sh_\ell\db_\lambda^{j}\xi)|^2+|\fzeta^{j,\ell}(\sh_\ell\db_\lambda^{j}\exchg\xi)|^2)
\\&\ge\sum_{j=J_0}^\infty\Big[
\fbeta_{\lambda,t,\rho}(\lambda^{-2j}\xi_1)\chi_{\{|\xi_2/\xi_1|\le1\}}(\xi)
+\fbeta_{\lambda,t,\rho}(\lambda^{-2j}\xi_2)\chi_{\{|\xi_2/\xi_1|>1\}}(\xi)\Big]
\\&>0.
\end{aligned}
\]
Consequently, $\fTheta^{J_0}>0$.

We next show that $\fTheta^{J_0}\le 2$. By the property of
$\fgamma_\varepsilon$ as in \eqref{eq:gamma:non-station}, we have
\[
\begin{aligned}
&\sum_{\ell=-\ell_{\lambda^{j}}}^{\ell_{\lambda^{j}}}|\feta(\sh_\ell\db_\lambda^j\xi)|^2\chi_{\{|\xi_2/\xi_1|\le1\}}(\xi)
=|\falpha_{\lambda,t,\rho}(\lambda^{-2j}\xi_1)|^2\chi_{\{|\xi_2/\xi_1|\le1\}}(\xi),\quad
\xi\neq0
\end{aligned}
\]
and similarly,
\[
\begin{aligned}
&\sum_{\ell=-\ell_{\lambda^j}}^{\ell_{\lambda^j}}|\fzeta(\sh_\ell\db_\lambda^j\xi)|^2\chi_{\{|\xi_2/\xi_1|\le1\}}(\xi)=
|\fbeta_{\lambda,t,\rho}(\lambda^{-2j}\xi_1)|^2\chi_{\{|\xi_2/\xi_1|\le1\}}(\xi),\quad\xi\neq0.
\end{aligned}
\]
Hence,
\[
\begin{aligned}
&\sum_{\ell=-\ell_{\lambda^j}}^{\ell_{\lambda^j}}(|\feta(\sh_\ell\db_\lambda^j\xi)|^2+|\fzeta(\sh_\ell\db_\lambda^j\xi)|^2)\chi_{\{|\xi_2/\xi_1|\le1\}}(\xi)
\\&=(|\falpha_{\lambda,t,\rho}(\lambda^{-2j}\xi_1)|^2
+|\fbeta_{\lambda,t,\rho}(\lambda^{-2j}\xi_1)|^2)\chi_{\{|\xi_2/\xi_1|\le1\}}(\xi)
\\&=
|\falpha_{\lambda,t,\rho}(\lambda^{-2(j+1)}\xi_1)|^2\chi_{\{|\xi_2/\xi_1|\le1\}}(\xi)
\\&=\sum_{\ell=-\ell_{\lambda^{j+1}}}^{\ell_{\lambda^{j+1}}}|\feta^{j+1,\ell}(\sh_\ell\db_\lambda^{j+1}\xi)|^2\chi_{\{|\xi_2/\xi_1|\le1\}}(\xi),\quad\xi\neq0.
\end{aligned}
\]
Therefore, we have
\[
\begin{aligned}
&\lim_{J\rightarrow\infty}
\Big(\sum_{\ell=-\ell_{\lambda^{J_0}}}^{\ell_{\lambda^{J_0}}}|\feta^{J_0,\ell}(\sh_\ell\db_\lambda^{J_0}\xi)|^2+\sum_{j=J_0}^{J-1}
\sum_{\ell=-\ell_{\lambda^j}}^{\ell_{\lambda^j}}
|\fzeta^{j,\ell}(\sh_\ell\db_\lambda^j\xi)|^2\Big)\chi_{\{|\xi_2/\xi_1|\le1\}}(\xi)
\\&=\lim_{J\rightarrow\infty}|\falpha_{\lambda,t,\rho}(\lambda^{-2J}\xi)|^2\chi_{\{|\xi_2/\xi_1|\le1\}}(\xi)
=\chi_{\{|\xi_2/\xi_1|\le1\}}(\xi), \quad\xi\neq0.
\end{aligned}
\]
Now, we define
\[
\begin{aligned}
\widetilde\fTheta^{J_0}(\xi):=&\sum_{\ell=-\ell_{\lambda^{J_0}}}^{
\ell_{\lambda^{J_0}}}(|\feta^{J_0,\ell}(\sh_\ell\db_\lambda^{{J_0}}\xi)|^2
+|\feta^{J_0,\ell}(\sh_\ell\db_\lambda^{{J_0}}\exchg\xi)|^2)
\\&
+\sum_{j={J_0}}^\infty\sum_{\ell=-\ell_{\lambda^j}}^{
\ell_{\lambda^j}}(|\fzeta^{j,\ell}(\sh_\ell\db_\lambda^j\xi)|^2+|\fzeta^{j,\ell}(\sh_\ell\db_\lambda^j\exchg\xi)|^2),
\quad \xi\neq0.
\end{aligned}
\]
Then,
\[
\begin{aligned}
\widetilde\fTheta^{J_0}(\xi)&=\sum_{\ell=-\ell_{\lambda^{J_0}}}^{
\ell_{\lambda^{J_0}}}(|\feta^{J_0,\ell}(\sh_\ell\db_\lambda^{{J_0}}\xi)|^2
+|\feta^{J_0,\ell}(\sh_\ell\db_\lambda^{{J_0}}\exchg\xi)|^2)(\chi_{\{|\xi_2/\xi_1|\le1\}}(\xi)+\chi_{\{|\xi_2/\xi_1|>1\}}(\xi))
\\&
+\sum_{j={J_0}}^\infty\sum_{\ell=-\ell_{\lambda^j}}^{ \ell_{\lambda^j}}(|\fzeta^{j,\ell}(\sh_\ell\db_\lambda^j\xi)|^2+|\fzeta^{j,\ell}(\sh_\ell\db_\lambda^j\exchg\xi)|^2)
(\chi_{\{|\xi_2/\xi_1|\le1\}}(\xi)+\chi_{\{|\xi_2/\xi_1|>1\}}(\xi))\\
&=
1+\Big(|\feta^{J_0,\pm\ell_{\lambda^{J_0}}}(\sh_{\pm\ell_{\lambda^{J_0}}}\db_\lambda^{{J_0}}\xi)|^2+
\sum_{j={J_0}}^\infty|\fzeta^{j,\pm\ell_{\lambda^j}}(\sh_{\pm\ell_{\lambda^{j}}}\db_\lambda^j\xi)|^2\Big)\chi_{\{|\xi_2/\xi_1|>1\}}(\xi)
\\&
+\Big(|\feta^{J_0,\pm\ell_{\lambda^{J_0}}}(\sh_{\pm\ell_{\lambda^{J_0}}}\db_\lambda^{{J_0}}\exchg\xi)|^2+
\sum_{j={J_0}}^\infty|\fzeta^{j,\pm\ell_{\lambda^j}}(\sh_{\pm\ell_{\lambda^{j}}}\db_\lambda^j\exchg\xi)|^2\Big)\chi_{\{|\xi_2/\xi_1|\le1\}}(\xi)
\\&=1+I(\xi)+I(\exchg\xi)-\sqrt{I(\xi)I(\exchg\xi)},\quad\xi\neq0,
\end{aligned}
\]
where
\[
I(\xi):=\Big(|\feta^{J_0,\pm\ell_{\lambda^{J_0}}}(\sh_{\pm\ell_{\lambda^{J_0}}}\db_\lambda^{{J_0}}\xi)|^2+
\sum_{j={J_0}}^\infty|\fzeta^{j,\pm\ell_{\lambda^j}}(\sh_{\pm\ell_{\lambda^{j}}}\db_\lambda^j\xi)|^2\Big)\chi_{\{|\xi_2/\xi_1|\ge1\}}(\xi),\quad\xi\neq0.
\]
 By the
construction of $\falpha_{\lambda,t,\rho}$ and
$\fbeta_{\lambda,t,\rho}$, we have $I\le 1$. Therefore, $1\le
\widetilde\fTheta^{J_0}\le 2$. Observe that $\fTheta^{J_0}(\xi)\le
\widetilde\fTheta^{J_0}(\xi)\le 2$ for $\xi\neq0$ and
$\fTheta^{J_0}(0)=1$, we conclude that item (i) holds.

We next show that item (ii) holds. We have
\[
\fTheta^{J_0}(\xi) =
\widetilde\fTheta^{J_0}(\xi)+|\fphi(\dn_\lambda^{J_0}\xi)|^2-
\sum_{\ell=-\ell_{\lambda^{J_0}}}^{
\ell_{\lambda^{J_0}}}(|\feta^{J_0,\ell}(\sh_\ell\db_\lambda^{{J_0}}\xi)|^2.
\]
Note that $\supp\fphi(\dn_\lambda^{J_0}\cdot)$ is inside the support
of $\sum_{\ell=-\ell_{\lambda^{J_0}}}^{
\ell_{\lambda^{J_0}}}(|\feta^{J_0,\ell}(\sh_\ell\db_\lambda^{{J_0}}\cdot)|^2$.
Hence, for $\xi$ outside the support of
$\sum_{\ell=-\ell_{\lambda^{J_0}}}^{
\ell_{\lambda^{J_0}}}(|\feta^{J_0,\ell}(\sh_\ell\db_\lambda^{{J_0}}\cdot)|^2$,
we have $\fTheta^{J_0}=\widetilde\fTheta^{J_0}$. By that
$\widetilde\fTheta^{J_0}=1+I+I(\exchg\cdot)-\sqrt{I\cdot
I(\exchg\cdot)}$, we hence only need to check the overlapping coming
from $I$ and $I(\exchg\cdot)$. In fact,
at scale $j$, the seamline element on the  horizontal cone  with
respect to $\ell=-\ell_{\lambda^j}$ has part of the piece
overlapping with the other cone. By the support of
$\fgamma^+_{\lambda^j,\varepsilon,\varepsilon_0}$, for this seamline
element, we have its support satisfying $\xi_2/\xi_1\le
1+\frac{2\varepsilon_0}{\lambda^{2j}}$. Moreover, by the support of
$\fbeta_{\lambda,t,\rho}$, this seamline element can only affect
other elements in the vertical cone with respect to scales $j_0=
j-1, j, j+1$. Now, the support of the vertical cone element
corresponding to scale $j_0$ and $\ell=- \ell_{\lambda^{j_0}}+s$
with $s$ being a nonnegative integer satisfying
\[
\lambda^{j_0}\xi_1/\xi_2- \ell_{\lambda^{j_0}}+s\le \frac12+\varepsilon,
\]
which implies $\xi_1/\xi_2\le \frac{1/2+\varepsilon+ \ell_{\lambda^{j_0}}-s}{\lambda^{j_0}}$. Consequently, the seamline element on the horizontal cone affecting
the elements in the vertical cone at scale $j_0$ means
\[
1+\frac{2\varepsilon_0}{\lambda^{2j}}\ge \frac{\lambda^{j_0}}{1/2+\varepsilon+ \ell_{\lambda^{j_0}}-s},
\]
which implies
\[
\begin{aligned}
s&\le\frac12+\varepsilon+\ell_{\lambda^{j_0}}-\frac{\lambda^{2j+j_0}}{\lambda^{2j}+2\varepsilon_0}
\\&\le\frac12+\varepsilon+(\lambda^{j_0}+\frac12-\varepsilon)-\frac{\lambda^{2j+j_0}}{\lambda^{2j}+2\varepsilon_0}
\\&\le1+\frac{2\varepsilon_0\lambda^{j_0}}{\lambda^{2j}+2\varepsilon_0}
\le1+\frac{2\varepsilon_0\lambda^{j+1}}{\lambda^{2j}+2\varepsilon_0}
\\&\le 1+\frac{2\varepsilon_0}{\lambda^{j-1}}
\le 1+\frac{2\varepsilon_0}{\lambda^{J_0-1}}<2
\end{aligned}
\]
since $\varepsilon_0<\frac{\lambda^{J_0-1}}{2}$. Hence, by that  $s$
is a nonnegative integer, we deduce that $s$ is either 0 or 1. By
symmetry, same result holds for seamline elements on vertical cone
affecting the horizontal cone. Therefore, we have
$\fTheta^{J_0}(\xi)\equiv\fTheta^{J_0}(\exchg\xi)\equiv 1$ for $\xi$
in the support of those
$\fzeta^{j,\ell}(\sh_\ell\db_\lambda^j\cdot)$ with $|\ell|<
\ell_{\lambda^j}-1$ and $j\ge J_0+1$. That is, item (ii) holds.
\end{proof}
The function $\fTheta^{J_0}$ is for the normalization of the frame
generated by $\fzeta^{j,\ell}$. We next define a function
$\fGamma^j$, which shall be used  for frequency splitting. The
function $\fGamma^j$ is defined to be
\begin{equation}\label{def:fGamma-non-station}
\begin{aligned}
\fGamma^j(\xi):=&\sum_{\ell=-\ell_{\lambda^j}+1}^{\ell_{\lambda^j}-1}
\Big[|\fgamma_\varepsilon(\lambda^j\xi_2/\xi_1+\ell)|^2+|\fgamma_\varepsilon(\lambda^j\xi_1/\xi_2+\ell)|^2\Big]
\\&+|\fgamma^\mp_{\lambda^j,\varepsilon,\varepsilon_0}(\lambda^j\xi_2/\xi_1\pm\ell_{\lambda^j})|^2
+|\fgamma^\mp_{\lambda^j,\varepsilon,\varepsilon_0}(\lambda^j\xi_1/\xi_2\pm\ell_{\lambda^j})|^2,\quad
\xi\neq0.
\end{aligned}
\end{equation}
 For $\fGamma^j$,
we  have the following result.
\begin{proposition}\label{prop:fGamma-non-station}
Let $\fGamma^j$ be defined as in \eqref{def:fGamma-non-station}.
Then $\fGamma^j$ is a function in $C^\infty(\R^2\backslash\{0\})$
and has the following properties.
\begin{itemize}
\item[{\rm(i)}]
  $1\le\fGamma_{j}(\xi)\le 2$, $\fGamma^j(\exchg\xi) = \fGamma^j(\xi)$, and  $\fGamma^j(t\xi)=\fGamma^j(\xi)$ for all $t\neq0$ and $\xi\neq0$.
\item[{\rm(ii)}] $\fGamma^j$ satisfies
\begin{equation}\label{eq:fGamma==1-non-station}
\fGamma^{j}(\xi)\equiv1, \; \xi\in\left\{\xi\in\R^2\backslash\{0\}:
\max\{|\xi_2/\xi_1|,|\xi_1/\xi_2|\}\le
\frac{\lambda^{2j}}{\lambda^{2j}+2\varepsilon_0}\right\}.
\end{equation}
\end{itemize}
\end{proposition}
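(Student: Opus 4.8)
The plan is to separate $\fGamma^j$ into its horizontal and vertical groups and to treat each as a one–dimensional partition of unity evaluated at the slopes $\xi_2/\xi_1$ and $\xi_1/\xi_2$. Writing
\[
H^j(\xi):=\sum_{\ell=-\ell_{\lambda^j}+1}^{\ell_{\lambda^j}-1}|\fgamma_\varepsilon(\lambda^j\xi_2/\xi_1+\ell)|^2+|\fgamma^\mp_{\lambda^j,\varepsilon,\varepsilon_0}(\lambda^j\xi_2/\xi_1\pm\ell_{\lambda^j})|^2
\]
and denoting by $V^j(\xi)$ the analogous sum with $\xi_2/\xi_1$ replaced by $\xi_1/\xi_2$, we have $\fGamma^j=H^j+V^j$. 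The whole argument rests on one observation: $H^j(\xi)$ is exactly the left-hand side of \eqref{eq:gamma:non-station} evaluated at $x=\xi_2/\xi_1$, so $H^j(\xi)=1$ whenever $|\xi_2/\xi_1|\le1$, and symmetrically $V^j(\xi)=1$ whenever $|\xi_1/\xi_2|\le1$. Everything else is read off from this together with the supports of $\fgamma_\varepsilon$ and of the corner pieces, for which I record the two facts I will use: the bumps $\fgamma_\varepsilon(\lambda^j\,\cdot+\ell)$ with $|\ell|\le\ell_{\lambda^j}-1$ are supported in $|\xi_2/\xi_1|\le1$ (using $\ell_{\lambda^j}=\lfloor\lambda^j+1/2-\varepsilon\rfloor\le\lambda^j+1/2-\varepsilon$), while the corner pieces extend exactly to $|\xi_2/\xi_1|=1+2\varepsilon_0/\lambda^{2j}$ and vanish at that endpoint since $\nu(-1)=0$ in \eqref{def:gamma:corner-smooth0}.

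The invariance assertions in (i) are immediate. Both $H^j$ and $V^j$ depend on $\xi$ only through the ratios $\xi_2/\xi_1$ and $\xi_1/\xi_2$, which are unchanged under $\xi\mapsto t\xi$ for $t\neq0$, so $\fGamma^j(t\xi)=\fGamma^j(\xi)$; and $\exchg$ interchanges the two ratios, hence interchanges $H^j$ and $V^j$, giving $\fGamma^j(\exchg\xi)=\fGamma^j(\xi)$. For the $C^\infty(\R^2\setminus\{0\})$ regularity I would cover $\R^2\setminus\{0\}$ by the half-planes $\{\xi_1\neq0\}$ and $\{\xi_2\neq0\}$. On $\{\xi_1\neq0\}$ the slope $\xi_2/\xi_1$ is smooth, so $H^j$ is smooth there; the term $V^j$ is a composition with $\xi_1/\xi_2$, singular only on $\xi_2=0$, but its support lies in $|\xi_1/\xi_2|\le1+2\varepsilon_0/\lambda^{2j}$, i.e. $|\xi_2|\ge|\xi_1|/(1+2\varepsilon_0/\lambda^{2j})$, which keeps $V^j\equiv0$ on a full neighborhood of $\{\xi_2=0,\ \xi_1\neq0\}$ and makes it smooth there too. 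Thus $\fGamma^j\in C^\infty(\{\xi_1\neq0\})$, and the symmetric argument yields smoothness on $\{\xi_2\neq0\}$.

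For the bounds in (i), the lower bound holds because at every $\xi\neq0$ at least one of $|\xi_2/\xi_1|\le1$ or $|\xi_1/\xi_2|\le1$ is satisfied (their product is $1$, with the degenerate cases $\xi_1=0$ or $\xi_2=0$ handled directly), so one of $H^j,V^j$ equals $1$ while the other, a sum of squares, is $\ge0$; hence $\fGamma^j\ge1$. For the upper bound I would check $H^j\le1$ and $V^j\le1$ pointwise: on $|\xi_2/\xi_1|\le1$ we have $H^j=1$ by \eqref{eq:gamma:non-station}, while on $|\xi_2/\xi_1|>1$ all the bumps $\fgamma_\varepsilon(\lambda^j\xi_2/\xi_1+\ell)$ have already vanished, so $H^j$ reduces to a single corner term $|\fgamma^\pm_{\lambda^j,\varepsilon,\varepsilon_0}(\cdot)|^2\le1$ (as $0\le\fgamma^\pm\le1$). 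Hence $H^j\le1$ everywhere, and by symmetry $V^j\le1$, giving $\fGamma^j\le2$.

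Finally, (ii) concerns directions strictly interior to one of the two cones, i.e. those with $\min\{|\xi_2/\xi_1|,|\xi_1/\xi_2|\}\le\lambda^{2j}/(\lambda^{2j}+2\varepsilon_0)$. Taking, say, $|\xi_2/\xi_1|\le\lambda^{2j}/(\lambda^{2j}+2\varepsilon_0)<1$, we get $H^j(\xi)=1$ from \eqref{eq:gamma:non-station}, while $V^j(\xi)=0$: indeed $V^j$ is supported in $|\xi_1/\xi_2|\le1+2\varepsilon_0/\lambda^{2j}$, that is $|\xi_2/\xi_1|\ge\lambda^{2j}/(\lambda^{2j}+2\varepsilon_0)$, and the corner function vanishes at its endpoint $1+2\varepsilon_0/\lambda^{2j}$, so even the boundary value is $0$. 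Thus $\fGamma^j=1$, and the symmetric case is identical. I expect the main obstacle to be the precise support bookkeeping for the corner pieces—pinning down that $\fgamma^\pm_{\lambda^j,\varepsilon,\varepsilon_0}$ reaches past the seamline exactly to $1+2\varepsilon_0/\lambda^{2j}$ and vanishes there—since this single threshold is simultaneously responsible for the bound $\le2$, for the smooth vanishing of $V^j$ near $\xi_2=0$, and for the equality in (ii) holding right up to the stated boundary.
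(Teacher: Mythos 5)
Your argument is correct and follows essentially the same route as the paper's proof: invariances read off from the definition, smoothness via the two half-planes $\{\xi_1\neq0\}$, $\{\xi_2\neq0\}$ using that the "other cone's" sum vanishes near the opposite axis, the bounds from the partition-of-unity identity \eqref{eq:gamma:non-station}, and (ii) from the fact that the corner pieces reach only to slope $1+2\varepsilon_0/\lambda^{2j}$ past the seamline (where they vanish). Your explicit $H^j+V^j$ bookkeeping just fills in details the paper leaves implicit, and your reading of the condition in (ii) as $\min\{|\xi_2/\xi_1|,|\xi_1/\xi_2|\}$ rather than the literal (vacuous) $\max$ is the intended one.
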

\begin{proof}
Since $\fgamma_\varepsilon$ is a compactly supported function in
$C^\infty(\R)$ and the function $f(\xi):=\xi_2/\xi_1$ or
$\xi_1/\xi_2$ is infinitely differentiable for all
$\xi=(\xi_1,\xi_2)\in\R^2$ such that both $\xi_1\neq0$ and
$\xi_2\neq 0$, we see that by Taylor expansion $\fGamma^j$ is
infinitely differentiable for $\xi\in\R^2$ such that both
$\xi_1\neq0$ and $\xi_2\neq0$. For a fixed $\xi_1\neq0$,  we have
\[
\Gamma^j(\xi)= \sum_{\ell=-\ell_{\lambda^j}+1}^{\ell_{\lambda^j}-1}
|\fgamma_\varepsilon(\lambda^j\xi_2/\xi_1+\ell)|^2+|\fgamma^\mp_{\lambda^j,\varepsilon,\varepsilon_0}(\lambda^j\xi_2/\xi_1\pm\ell_{\lambda^j})|^2
\]
for $|\xi_2|$ small enough in view of the supports of
$\fgamma_\varepsilon$ and
$\fgamma_{\lambda^j,\varepsilon,\varepsilon_0}$, which implies that
$\fGamma^j(\xi)$ is infinitely differentiable at $(\xi_1,0)$ with
$\xi_1\neq0$. Similarly, we have $\fGamma^j(\xi)$ is infinitely
differentiable at $(0,\xi_2)$ with $\xi_2\neq0$.
 Hence, we have $\fGamma^j\in C^\infty(\R^2\backslash\{0\})$. By
its definition as in \eqref{def:fGamma-non-station},
$\fGamma^j(\exchg\cdot) =\fGamma^j$ and $\fGamma^j(t\xi)
=\fGamma^j(\xi)$ for $t\neq0$ and $\xi\neq0$.

By the property of $\fgamma_\varepsilon,
\fgamma_{\lambda,\varepsilon,\varepsilon_0}$ as in
\eqref{eq:gamma:non-station}, it is easily seen that
$1\le\fGamma^j(\xi)\le 2$ for $\xi\neq0$. Now to see that
\eqref{eq:fGamma==1-non-station} holds, we  notice that the seamline
element on the  horizontal cone with respect to
$\ell=-\ell_{\lambda^j}$ has part of the piece overlapping with the
other cone. By the support of
$\fgamma_{\lambda,\varepsilon,\varepsilon_0}^+$, for this seamline
element, we have $\xi_2/\xi_1\le
1+\frac{2\varepsilon_0}{\lambda^{2j}}$. Hence, those elements on the
vertical cone with support satisfying $|\xi_1/\xi_2|\le
\frac{\lambda^{2j}}{\lambda^{2j}+2\varepsilon_0}$ is not affected by
that seamline elements. By symmetry, same result holds for seamline
elements on vertical cone affecting the horizontal cone. Therefore,
\eqref{eq:fGamma==1-non-station} holds.
\end{proof}

Since $0< \fTheta^{J_0}\le 2$, we can take the square root of
$\fTheta^{J_0}$, which is still a smooth function. Moreover,
$1/\sqrt{\fTheta^{J_0}}$ is also a smooth function. Define
$\fphi^{J_0}:=\frac{\fphi}{\sqrt{\fTheta(\dm_\lambda^{J_0}\cdot)}}$
%
%
%
and
\begin{equation}\label{def:fG}
\begin{aligned}
\fomega_{\lambda,t,\rho}^{j}(\dn_\lambda^j\xi) :=&\frac{\left(
\sum_{\ell=-\ell_{\lambda^j}}^{\ell_{\lambda^j}}
(|\fzeta^{j,\ell}(\sh_\ell\db_\lambda^j\xi)|^2
+|\fzeta^{j,\ell}(\sh_\ell\db_\lambda^j\exchg\xi)|^2)\right)^{1/2} }
{\sqrt{\fTheta^{J_0}(\xi)}},\;\; j\ge J_0.
\end{aligned}
\end{equation}
Define $\fphi^{j+1}$ to be
\begin{equation}\label{def:fphij}
\fphi^{j+1}(\dn_{\lambda}^{j+1}\xi):=\left(|\fphi^{j}(\dn_{\lambda}^j\xi)|^2+|\fomega_{\lambda,t,\rho}^j(\dn_\lambda^j\xi)|^2\right)^{1/2}.
\end{equation}
Now, we split the function $\fomega_{\lambda,t,\rho}^j$ as follows.
Let $\dd_\lambda:=\diag(1,\lambda)$. For $\xi\neq0$, define
\begin{equation}\label{def:fpsijl}
\fpsi^{j,\ell}(\xi):=\fomega_{\lambda,t,\rho}^j(\dd_\lambda^{-j}\sh_{-\ell}\xi)\frac{\fgamma_\varepsilon(\xi_2/\xi_1)}{\fGamma^j((\sh_\ell\db_\lambda^j)^{-1}\xi)},
\;\ell=- \ell_{\lambda^j}+1,\ldots, \ell_{\lambda^j}-1,
\end{equation}
and
\begin{equation}\label{def:fpsijl-corner}
\fpsi^{j,\pm\ell_{\lambda^j}}(\xi)
:=\fomega_{\lambda,t,\rho}^j(\dd_\lambda^{-j}\sh_{\mp\ell_{\lambda^j}}\xi)
\frac{\fgamma^{\mp}_{\lambda^j,\varepsilon,\varepsilon_0}(\xi_2/\xi_1)}
{\fGamma^j((\sh_{\pm\ell_{\lambda^j}}\db_\lambda^j)^{-1}\xi)}.
\end{equation}
For $\xi=0$, we define $\fpsi^{j,\ell}(0):=0$. Since the support of
$\fomega^j_{\lambda,t,\rho}$ is away from the origin and in view of
the properties of $\fGamma^j$, we deduce that $\fpsi^{j,\ell}$ are
functions in $C^\infty(\R^2)$. Let
\begin{equation}
\label{def:fPsi-j-non-station}
\fPsi_j:=\{\fpsi^{j,\ell}(\sh_\ell\cdot): \ell=-
\ell_{\lambda_j},\ldots, \ell_{\lambda_j}\}
\end{equation}
with $\fpsi^{j,\ell}$ being given as in \eqref{def:fpsijl} and
\eqref{def:fpsijl-corner}. The  \emph{frequency-based  affine shear
system} $\FASS_J(\fphi^J;\{\fPsi_j\}_{j=J}^\infty)$ is then defined
as follows:
\begin{equation}
\begin{aligned}\label{def:DFAS-cone-shearlet-smooth-non-station}
\FASS_J(\fphi^J;\{\fPsi_j\}_{j=J}^\infty) :=
& \{\fphi^J_{\dn^J;0,\vk}:\vk\in\Z^2\}\cup\{\fh_{\db_\lambda^j;0,\vk},\fh_{\db_\lambda^j\exchg;0,\vk}: \vk\in\Z^2, \fh\in\fPsi_j\}_{j=J}^\infty.\\
\end{aligned}
\end{equation}
Note that, explicitly, we have,
\[
\begin{aligned}
\FASS_J(\fphi^J;\{\fPsi_j\}_{j=J}^\infty) = & \{\fphi^J_{\dn^J;0,\vk}:\vk\in\Z^2\}
\cup\{\fpsi^{j,\ell}_{\sh_\ell\db_\lambda^j;0,\vk},\fpsi^{j,\ell}_{\sh_\ell\db_\lambda^j\exchg;0,\vk}: \vk\in\Z^2, \ell=- \ell_{\lambda^j},\ldots, \ell_{\lambda^j}\}_{j=J}^\infty.
\end{aligned}
\]
With the property of $\fTheta^{J_0}$ as in item (ii) of
Proposition~\ref{prop:fTheta}, we can show that the system defined
as \eqref{def:DFAS-cone-shearlet-smooth-non-station} can have shear
structure for elements inside  each cone. Moreover, with the scale
$j$ going to infinity, the shear operation could reach the seamline
arbitrarily close. Indeed, we have the following result.
\begin{theorem}\label{thm:non-station}
Let $\lambda>1$, $0<\varepsilon\le1/2$, $0<t,\rho\le1$ such that
$1/\rho-1/2 -\varepsilon>0$. Let $J_0$ be a nonnegative integer.
Choose $\varepsilon_0>0$ such that
\[
\varepsilon_0<\min\left\{ \frac{\lambda^{J_0-1}}{2},
\lambda^{2J_0}(\frac{\lambda^2}{2\rho}-1/2),
(1/\rho-1/2-\varepsilon)\lambda^{J_0} \right\}.
\]
Then the system $\FASS_J(\fphi^J;\{\fPsi_j\}_{j=J}^\infty)$ defined
as in \eqref{def:DFAS-cone-shearlet-smooth-non-station} with
$\fphi^j$ and $\fPsi_j$ being given as in \eqref{def:fphij} and
\eqref{def:fPsi-j-non-station}, respectively, is a  frequency-based
affine shear tight frame for $L_2(\R^2)$ for all $J\ge J_0$. All
elements in $\FASS_J(\fphi^J;\{\fPsi_j\}_{j=J}^\infty)$ are
compactly supported functions in $C^\infty(\R^2)$. Moreover,
\[
\{\fzeta(\sh_\ell\cdot):
|\ell|<\ell_{\lambda^j}-1\}\subseteq\fPsi_j,\;\; j\ge J_0+1,
\]
and
\[
\{\fzeta_{\sh_\ell\db_\lambda^j;0,\vk},
\fzeta_{\sh_\ell\db_\lambda^j\exchg;0,\vk}: j\ge J,\vk\in\Z^2,
|\ell|<
\ell_{\lambda^j}-1\}\subseteq\FASS_J(\fphi^J;\{\fPsi_j\}_{j=J}^\infty),\;\;
J\ge J_0+1.
\]
\end{theorem}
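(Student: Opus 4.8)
The plan is to invoke Corollary~\ref{cor:positive}, since every generator in the construction is nonnegative: $\fphi^j=\fphi/\sqrt{\fTheta^{J_0}(\dm_\lambda^{J_0}\cdot)}\ge0$, and each $\fpsi^{j,\ell}$ in \eqref{def:fpsijl}--\eqref{def:fpsijl-corner} is a product of the nonnegative factors $\fomega_{\lambda,t,\rho}^j$, $\fgamma_\varepsilon$ (or $\fgamma^{\mp}_{\lambda^j,\varepsilon,\varepsilon_0}$) and $1/\fGamma^j$, the last being smooth and well defined because $1\le\fGamma^j\le2$ by Proposition~\ref{prop:fGamma-non-station}. It then suffices to check \eqref{eq:SimpleCond1}, \eqref{eq:SimpleCond2}, \eqref{eq:SimpleCond3}. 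For \eqref{eq:SimpleCond2} the central computation uses $\sh_{-\ell}\sh_\ell=\Id$ and $\dd_\lambda^{-j}\db_\lambda^j=\dn_\lambda^j$, so that evaluating \eqref{def:fpsijl} at $\sh_\ell\db_\lambda^j\xi$ collapses the $\fomega$-factor to $\fomega_{\lambda,t,\rho}^j(\dn_\lambda^j\xi)$, the angular factor to $\fgamma_\varepsilon(\lambda^j\xi_2/\xi_1+\ell)$, and (by the scale invariance $\fGamma^j(t\xi)=\fGamma^j(\xi)$) the denominator to $\fGamma^j(\xi)$; the $\exchg$-flipped term yields the same $\fomega_{\lambda,t,\rho}^j(\dn_\lambda^j\xi)$ (using the $\exchg$-invariance of both $\fTheta^{J_0}$ and the numerator of $\fomega^j$) with angular factor $\fgamma_\varepsilon(\lambda^j\xi_1/\xi_2+\ell)$. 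Summing $|\fpsi^{j,\ell}(\sh_\ell\db_\lambda^j\xi)|^2+|\fpsi^{j,\ell}(\sh_\ell\db_\lambda^j\exchg\xi)|^2$ over $\ell$ (corners included) factors out $|\fomega_{\lambda,t,\rho}^j(\dn_\lambda^j\xi)|^2/\fGamma^j(\xi)^2$ times the full angular sum, which by \eqref{def:fGamma-non-station} is exactly $\fGamma^j(\xi)$; hence the sum equals $|\fomega_{\lambda,t,\rho}^j(\dn_\lambda^j\xi)|^2$, and \eqref{eq:SimpleCond2} follows from \eqref{def:fphij}.

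For \eqref{eq:SimpleCond3} I would telescope \eqref{def:fphij} down to scale $J_0$, using $|\fphi^{J_0}(\dn_\lambda^{J_0}\xi)|^2=|\fphi(\dn_\lambda^{J_0}\xi)|^2/\fTheta^{J_0}(\xi)$ and \eqref{def:fG}, to get
\[
|\fphi^j(\dn_\lambda^j\xi)|^2=\frac{1}{\fTheta^{J_0}(\xi)}\Big(|\fphi(\dn_\lambda^{J_0}\xi)|^2+\sum_{i=J_0}^{j-1}\sum_{\ell=-\ell_{\lambda^i}}^{\ell_{\lambda^i}}\big(|\fzeta^{i,\ell}(\sh_\ell\db_\lambda^i\xi)|^2+|\fzeta^{i,\ell}(\sh_\ell\db_\lambda^i\exchg\xi)|^2\big)\Big).
\]
As $j\to\infty$ the bracketed partial sums increase to $\fTheta^{J_0}(\xi)$ by its very definition \eqref{def:fTheta}, so $\lim_{j}|\fphi^j(\dn_\lambda^j\xi)|^2=1$ a.e.; moreover these partial sums are dominated by $\fTheta^{J_0}(\xi)$, which forces $|\fphi^j|\le1$ everywhere. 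Lemma~\ref{lemma:fphi} with $C=1$ then delivers \eqref{eq:SimpleCond3}.

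The remaining, and hardest, step is the non-overlap condition \eqref{eq:SimpleCond1}, which reduces to support estimates. For $\fphi^j$ the telescoped formula confines $\supp\fphi^j$ to $[-\lambda^{-2}\rho\pi,\lambda^{-2}\rho\pi]^2\subseteq(-\pi,\pi)^2$ (as $\rho\le1<\lambda^2$), so its nontrivial $2\pi\Z^2$-translates meet it only in a null set. For $\fpsi^{j,\ell}$ one checks that $\supp\fpsi^{j,\ell}$ is a thin angular wedge contained in $\{\zeta:|\zeta_1|\in[\lambda^{-2}(1-t)\rho\pi,\rho\pi],\ |\zeta_2|\le(1+2\varepsilon_0\lambda^{-2j})|\zeta_1|\}$ (with the two coordinates interchanged for the $\exchg$-conjugate), and I expect the three bounds imposed on $\varepsilon_0$, together with $\rho\le1$, $\varepsilon\le1/2$, and $1/\rho-1/2-\varepsilon>0$, to be precisely what force this wedge into $[-\pi,\pi]^2$ and keep it disjoint (up to a null set) from its integer-$2\pi$ shifts. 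The delicate bookkeeping here is for the corner pieces $\fpsi^{j,\pm\ell_{\lambda^j}}$, whose support protrudes across the seamline by $2\varepsilon_0\lambda^{-2j}$; controlling that protrusion is the main obstacle, and is exactly the role of the second and third constraints on $\varepsilon_0$. Granting these estimates, Corollary~\ref{cor:positive} yields the tight-frame property for all $J\ge J_0$.

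Finally, smoothness and compact support of every element are immediate, since each generator is a product of $C^\infty$ compactly supported functions divided by the strictly positive smooth functions $\fTheta^{J_0}$ and $\fGamma^j$, with the $\fpsi$-supports bounded away from the origin. For the shear-structure claim I would verify that for $|\ell|<\ell_{\lambda^j}-1$ and $j\ge J_0+1$ the wedge $\supp\fpsi^{j,\ell}(\sh_\ell\db_\lambda^j\cdot)$ lies in the region where $\fTheta^{J_0}\equiv1$ (Proposition~\ref{prop:fTheta}(ii)) and $\fGamma^j\equiv1$ (Proposition~\ref{prop:fGamma-non-station}(ii)); the angular bound $\lambda^{-j}(|\ell|+\tfrac12+\varepsilon)\le1-\lambda^{-j}$ together with $\varepsilon_0<\lambda^{J_0-1}/2$ places the wedge inside both regions. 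There $\fomega_{\lambda,t,\rho}^j(\dn_\lambda^j\xi)=|\fbeta_{\lambda,t,\rho}(\lambda^{-2j}\xi_1)|$ on the horizontal cone away from the seams (by the angular partition identity established in the proof of Proposition~\ref{prop:fTheta}), whence $\fpsi^{j,\ell}(\sh_\ell\db_\lambda^j\xi)=\fbeta_{\lambda,t,\rho}(\lambda^{-2j}\xi_1)\fgamma_\varepsilon(\lambda^j\xi_2/\xi_1+\ell)=\fzeta(\sh_\ell\db_\lambda^j\xi)$, i.e. $\fpsi^{j,\ell}=\fzeta$; the two displayed inclusions then follow at once.
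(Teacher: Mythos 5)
Your overall strategy coincides with the paper's: all generators are nonnegative, so one verifies \eqref{eq:SimpleCond1}, \eqref{eq:SimpleCond2}, \eqref{eq:SimpleCond3} and invokes Corollary~\ref{cor:positive}, and the shear-structure claim follows from $\fTheta^{J_0}\equiv1$ and $\fGamma^j\equiv1$ away from the seamlines. Your telescoping argument for \eqref{eq:SimpleCond3} is correct and in fact more explicit than the paper, which merely asserts that condition. However, two points need repair. First, your partition-of-unity computation is internally inconsistent: if the denominator in \eqref{def:fpsijl} is $\fGamma^j$, then the factored-out quantity is $|\fomega^j_{\lambda,t,\rho}(\dn_\lambda^j\xi)|^2/(\fGamma^j(\xi))^2$ and multiplying by the angular sum $\fGamma^j(\xi)$ yields $|\fomega^j_{\lambda,t,\rho}(\dn_\lambda^j\xi)|^2/\fGamma^j(\xi)$, \emph{not} $|\fomega^j_{\lambda,t,\rho}(\dn_\lambda^j\xi)|^2$; the identity \eqref{eq:SimpleCond2} closes only if the normalization is $\sqrt{\fGamma^j}$, as in the quasi-stationary definition \eqref{def:fpsij-station}. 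You cannot both carry the square of $\fGamma^j$ and land on $|\fomega^j|^2$; this discrepancy matters precisely on the corner pieces, where $\fGamma^j$ exceeds $1$.

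Second, the non-overlap condition \eqref{eq:SimpleCond1} is the one place where the three constraints on $\varepsilon_0$ are actually consumed, and you leave it at ``I expect'' and ``granting these estimates.'' The paper carries out the computation: for the corner pieces one bounds the angular width by
$\lambda^j(1+2\varepsilon_0/\lambda^{2j})-\ell_{\lambda^j}+1/2+\varepsilon\le 2\varepsilon_0/\lambda^{j}+1+2\varepsilon\le 2/\rho$,
using the third constraint, which gives the required disjointness from $2\pi\Z^2$-shifts. Moreover your claim that $\supp\fphi^j\subseteq[-\lambda^{-2}\rho\pi,\lambda^{-2}\rho\pi]^2$ is not accurate: the corner protrusion propagates through the telescoping into $\fphi^j$, whose support is only contained in $[-\lambda^{-2}\rho(1+2\varepsilon_0/\lambda^{2j})\pi,\lambda^{-2}\rho(1+2\varepsilon_0/\lambda^{2j})\pi]^2$ --- this is exactly what the second constraint $\varepsilon_0<\lambda^{2J_0}(\lambda^2/(2\rho)-1/2)$ is for, whereas you attribute both remaining constraints to the wedges $\fpsi^{j,\pm\ell_{\lambda^j}}$. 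Until these support estimates are actually written out, the proof of the tight-frame assertion is incomplete.
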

\begin{proof}
By the property of $\fTheta^{J_0}$  as in
Proposition~\ref{prop:fTheta}, we have
$\fomega_{\lambda,t,\rho}^j(\dn_\lambda^j\xi) =
\fbeta_{\lambda,t,\rho}(\lambda^{-2j}\xi_1)$ for
$\xi\in\supp\fzeta^{j,\ell}(\sh_\ell\db_\lambda^j\cdot)$ with
$|\ell|<\ell_{\lambda^j}-1$, $j\ge J_0+1$, and
$\fomega_{\lambda,t,\rho}^j(\xi) =
\fbeta_{\lambda,t,\rho}(\lambda^{-2j}\xi_2)$ for
$\xi\in\supp\fzeta^{j,\ell}(\sh_\ell\db_\lambda^j\exchg\cdot)$ with
$|\ell|<\ell_{\lambda^j}-1$ and $j\ge J_0+1$. Hence, it is easily
seen that for $j\ge J_0+1$,
\[
\fpsi^{j,\ell}(\xi) =
\fbeta_{\lambda,t,\rho}(\xi_1)\fgamma_\varepsilon(\xi_2/\xi_1)=\fzeta(\xi),
\quad |\ell|<\ell_{\lambda^j}-1.
\]
Define $\widetilde\fPsi_j:=\{\fzeta(\sh_\ell\cdot):
\ell=-\ell_{\lambda^j}+2,\ldots,\ell_{\lambda^j}-2\}\cup\{\fpsi^{j,\ell}(\sh_\ell\cdot):
|\ell|=\ell_{\lambda^j}-1,\ell_{\lambda^j}\}$. Then for $j\ge
J_0+1$, $\{\fzeta(\sh_\ell\cdot): |\ell|<
\ell_{\lambda^j}-1\}\subseteq\widetilde\fPsi_j=\fPsi_j$ and
$\FASS(\fphi^J;\{\fPsi_j\}_{j=J}^\infty)
=\FASS(\fphi^J;\{\widetilde\fPsi_j\}_{j=J}^\infty)$.

By  our construction, \eqref{eq:SimpleCond2} and
\eqref{eq:SimpleCond3} hold. Moreover, all generators are
nonnegative. Noting that
$\supp\falpha_{\lambda,t,\rho}=[-\lambda^{-2}\rho\pi,
\lambda^{-2}\rho\pi]$, $\supp\fbeta_{\lambda,t,\rho}=[-\rho\pi,
-\lambda^{-2}\rho\pi]\cup[\lambda^{-2}\rho\pi, \rho\pi]$, and
$\supp\gamma_\varepsilon =[-1/2-\varepsilon, 1/2+\varepsilon]$,
together with $\rho\le1$ and $0<\varepsilon\le1/2$, we see that
$\supp\fpsi^{j,\ell}\subseteq[-\rho\pi,\rho\pi]^2\subseteq[-\pi,\pi]^2$
for $|\ell|\le\ell_{\lambda^j}-1$. Hence, we have
$\fpsi^{j,\ell}(\xi)\fpsi^{j,\ell}(\xi+2\pi\vk)=0$, a.e.,
$\xi\in\R^2$ and $\vk\in\Z^2\backslash\{0\}$ for
$|\ell|\le\ell_{\lambda^j}-1$. For $\fpsi^{j,-\ell_{\lambda^j}}$ we
have
\[
\supp \fpsi^{j,-\ell_{\lambda^j}} \subseteq\{\xi\in\R^2: \xi_1\in[-\rho\pi,\rho\pi],-1/2-\varepsilon\le\xi_2/\xi_1\le\lambda^j(1+2\varepsilon_0/\lambda^{2j})-\ell_{\lambda^j}\}.
\]
Since $\varepsilon_0\le\lambda^{J_0}(2/\rho-1-2\varepsilon)$, we have,
\[
(\lambda^j(1+2\varepsilon_0/\lambda^{2j})-\ell_{\lambda^j}+1/2+\varepsilon)
\le
(\lambda^j(1+2\varepsilon_0/\lambda^{2j})-(\lambda^j+1/2-\varepsilon)+1+1/2+\varepsilon)
\le \frac{2\varepsilon_0}{\lambda^j}+1+2\varepsilon
\le 2/\rho.
\]
Hence, we  conclude that
$\fpsi^{j,\pm\ell_{\lambda^j}}(\xi)\fpsi^{j,\pm\ell_{\lambda^j}}(\xi+2\pi\vk)=0$,
a.e., $\xi\in\R^2$ and  $\vk\in\Z^2\backslash\{0\}$.

By the definition of $\fphi^j$ and that $\varepsilon_0\le \lambda^{2J_0}(\frac{\lambda^2}{2\rho}-1/2)$, we have
\[
\supp\fphi^j\subseteq[-\lambda^{-2}\rho(1+2\varepsilon_0/\lambda^{2j})\pi,\lambda^{-2}\rho(1+2\varepsilon_0/\lambda^{2j})\pi]^2\subseteq[-\pi,\pi]^2.
\]
 Hence,
we conclude that $\fphi^j(\xi)\fphi^j(\xi+2\pi\vk)=0$ for all
$\vk\in\Z^2\backslash\{0\}$ and for almost every $\xi\in\R^2$.
Therefore, \eqref{eq:SimpleCond1} holds. By the result of
Corollary~\ref{cor:positive},
$\FASS_J(\fphi^J;\{\fPsi_j\}_{j=J}^\infty)$ is a frequency-based
affine shear tight frame for $L_2(\R^2)$ for all $J\ge J_0$.  Since
all generators are compactly supported functions in
$C^\infty(\R^2)$,  all elements in
$\FASS_J(\fphi^J;\{\fPsi_j\}_{j=J}^\infty)$ are compactly supported
functions in $C^\infty(\R^2)$.
\end{proof}
From Theorem~\ref{thm:non-station}  we see that
\[
\fzeta(\sh_{-\ell_{\lambda^j}+2}\db_\lambda^j\xi)=\fbeta_{\lambda,t,\rho}(\lambda^{-2j}\xi_1)\gamma_\varepsilon(\lambda^j\xi_2/\xi_1-\ell_{\lambda^j}+2)
,\quad \xi\in\R^2
\]
 has support satisfying $\xi_2/\xi_1\le \frac{\ell_{\lambda^j}-2+1/2+\varepsilon}{\lambda^j}\rightarrow 1$ as $j\rightarrow \infty$.
In other words, the shear operation reaches arbitrarily close to the seamlines $\{\xi\in\R^2: |\xi_2/\xi_1|=\pm1\}$.

\subsection{Quasi-stationary construction}
 Let us next discuss the quasi-stationary construction. The idea is to use the tensor product of   functions in 1D to obtain rectangular bands for different scales, and then
 a frequency splitting using $\fgamma_\varepsilon$ is applied to produce generators with respect to different shears. More precisely,
let $\lambda>1$ and $0<t,\rho\le1$. Consider
$\fphi(\xi):=\falpha_{\lambda,t,\rho}(\xi_1)\falpha_{\lambda,t,\rho}(\xi_2)$,
$\xi=(\xi_1,\xi_2)\in\R^2$  and define
\begin{equation}\label{def:fomega}
\fomega_{\lambda,t,\rho}(\xi):=\sqrt{|\fphi(\lambda^{-2}\xi)|^2-|\fphi(\xi)|^2},\quad\xi\in\R^2.
\end{equation}
Then $\fomega_{\lambda,t,\rho}\in C^\infty(\R^2)$. In fact, it is
easy to show that if $\fphi(\xi_0)=0$ or $1$, then all the
derivatives of $\fphi$ vanish at $\xi_0$. Now if
$\widetilde\fomega_{\lambda,t,\rho}(\xi):=|\fphi(\lambda^{-2}\xi)|^2-|\fphi(\xi)|^2$
is not vanishing for $\xi=\xi_0$, then it is trivial to see that
$\fomega_{\lambda,t,\rho}
=\sqrt{\widetilde\fomega_{\lambda,t,\rho}}$ is infinitely
differentiable at $\xi=\xi_0$. If
$\widetilde\fomega_{\lambda,t,\rho}(\xi)=0$ at $\xi=\xi_0$, then we
must have $\fphi(\xi_0)=\fphi(\lambda^{-2}\xi_0)=0$ or
$\fphi(\xi_0)=\fphi(\lambda^{-2}\xi_0)=1$. Then, all the derivatives
of $\widetilde\fomega_{\lambda,t,\rho}$ vanish at $\xi_0$. By the
Taylor expansion, we see that $\fomega_{\lambda,t,\rho} =
\sqrt{\widetilde\fomega_{\lambda,t,\rho}}$ must be infinitely
differentiable at $\xi_0$ with all its derivatives at $\xi_0$ being
zero. Therefore, $\fomega_{\lambda,t,\rho}\in C^\infty(\R^2)$.

In view of the construction of $\fphi$, the refinable structure is
clear. We have $\fphi(\lambda^2\xi) = \fa(\xi) \fphi(\xi)$,
$\xi\in\R^2$ with $\fa=\fmu_{\lambda,t,\rho}\otimes
\fmu_{\lambda,t,\rho}$ being the tensor product of the 1D  mask
$\fmu_{\lambda,t,\rho}$ given as in \eqref{def:fmask}. Moreover, we
have $\fomega(\lambda^2\xi) = \fb(\xi)\fphi(\xi)$ with $\fb\in
C^\infty(\T)$ being given by
$\fb(\xi)=(\fg(\xi)-|\fa(\xi)|^2)^{1/2}$ for any smooth function
$\fg\in C^\infty(\T^2)$ such that $\fg\equiv 1$ on the support of
$\fphi$.

\begin{proposition}\label{prop:fGamma}
Define
\begin{equation}\label{def:fGamma}
\begin{aligned}
\fGamma_{j}(\xi)
:=&\sum_{\ell=-\ell_{\lambda^{j}}}^{\ell_{\lambda^{j}}}(|\fgamma_\varepsilon(\lambda^j\xi_2/\xi_1+\ell)|^2
+|\fgamma_\varepsilon(\lambda^j\xi_1/\xi_2+\ell)|^2),\;\; j\in \N_0.
\end{aligned}
\end{equation}
Then    $\fGamma_j\in C^\infty(\R^2\backslash\{0\})$ having the
following properties.
\begin{itemize}
\item[\rm{(i)}]$0< \fGamma_{j}(\xi)\le 2$, $\fGamma_j(\exchg\xi) = \fGamma_j(\xi)$, and $\fGamma_j(t\xi)=\fGamma_j(\xi)$ for all $t\neq0$ and $\xi\neq0$.
\item[\rm{(ii)}] $\fGamma_j$ satisfies
\begin{equation}\label{eq:fGamma==1}
\fGamma_{j}(\xi)\equiv1, \; \xi\in\left\{\xi\in\R^2:
\max\{|\xi_2/\xi_1|,|\xi_1/\xi_2|\}\le
\frac{\lambda^j}{\ell_{\lambda^j}+1/2+\varepsilon}\right\}.
\end{equation}
\end{itemize}
\end{proposition}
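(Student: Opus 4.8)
The plan is to follow almost verbatim the argument already carried out for $\fGamma^j$ in the proof of Proposition~\ref{prop:fGamma-non-station}, since $\fGamma_j$ differs from $\fGamma^j$ only in that every angular piece is now the interior bump $\fgamma_\varepsilon$ (there are no corner pieces) and the shear index runs over the full symmetric range $-\ell_{\lambda^j},\ldots,\ell_{\lambda^j}$. For the smoothness I would first note that $\fgamma_\varepsilon\in C^\infty(\R)$ is compactly supported and that the maps $\xi\mapsto\xi_2/\xi_1$ and $\xi\mapsto\xi_1/\xi_2$ are smooth wherever the denominator is nonzero, so every summand of $\fGamma_j$ is smooth on $\{\xi:\xi_1\neq0,\ \xi_2\neq0\}$. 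To cover the axes I would argue as in the earlier proposition: near a point $(\xi_1,0)$ with $\xi_1\neq0$ the terms $|\fgamma_\varepsilon(\lambda^j\xi_2/\xi_1+\ell)|^2$ are smooth because their argument stays bounded, whereas in the terms $|\fgamma_\varepsilon(\lambda^j\xi_1/\xi_2+\ell)|^2$ the argument runs off to $\pm\infty$ as $\xi_2\to0$ and, by compact support of $\fgamma_\varepsilon$, each such term vanishes identically in a neighborhood of $(\xi_1,0)$; hence $\fGamma_j$ agrees locally with a finite sum of smooth functions. The point $(0,\xi_2)$ is symmetric, giving $\fGamma_j\in C^\infty(\R^2\backslash\{0\})$.

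For part (i) I would read off $\fGamma_j(\exchg\xi)=\fGamma_j(\xi)$ and $\fGamma_j(t\xi)=\fGamma_j(\xi)$ ($t\neq0$) directly from the definition, since interchanging the coordinates swaps the two inner sums and $\fGamma_j$ depends on $\xi$ only through the ratios $\xi_2/\xi_1$ and $\xi_1/\xi_2$. The upper bound is immediate: each inner sum is a partial sum of the nonnegative terms of $\sum_{\ell\in\Z}|\fgamma_\varepsilon(\cdot+\ell)|^2\equiv1$, hence is at most $1$, so $\fGamma_j\le2$. For strict positivity I would fix $\xi\neq0$, use the flip symmetry to assume $|\xi_2|\le|\xi_1|$ (so $\xi_1\neq0$ and $x:=\xi_2/\xi_1$ has $|x|\le1$), and observe that the open interval $(-\lambda^j x-\tfrac12-\varepsilon,\ -\lambda^j x+\tfrac12+\varepsilon)$ has length $1+2\varepsilon>1$ and therefore contains an integer $\ell^{\ast}$ with $\fgamma_\varepsilon(\lambda^j x+\ell^{\ast})>0$; a short check using $\ell_{\lambda^j}=\lfloor\lambda^j+\tfrac12-\varepsilon\rfloor$ shows this $\ell^{\ast}$ may be taken in $\{-\ell_{\lambda^j},\ldots,\ell_{\lambda^j}\}$ (the extremal choice $\ell^{\ast}=\mp\ell_{\lambda^j}$ occurring when $|x|$ is near $1$), so the first inner sum, and hence $\fGamma_j(\xi)$, is positive.

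For part (ii) I would again reduce by flip symmetry to the horizontal cone, i.e. to $x=\xi_2/\xi_1$ with $|x|\le\frac{\lambda^j}{\ell_{\lambda^j}+1/2+\varepsilon}$. In this regime $|\xi_1/\xi_2|=1/|x|\ge\frac{\ell_{\lambda^j}+1/2+\varepsilon}{\lambda^j}$, so for every $|\ell|\le\ell_{\lambda^j}$ the argument $\lambda^j\xi_1/\xi_2+\ell$ has modulus at least $(\ell_{\lambda^j}+\tfrac12+\varepsilon)-\ell_{\lambda^j}=\tfrac12+\varepsilon$, which lies outside $\supp\fgamma_\varepsilon$; thus the second inner sum vanishes. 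For the first inner sum I would invoke \eqref{eq:gamma:station} with $\lambda$ replaced by $\lambda^j$, giving $\sum_{\ell=-\ell_{\lambda^j}}^{\ell_{\lambda^j}}|\fgamma_\varepsilon(\lambda^j x+\ell)|^2=1$ whenever $|x|\le\frac{\ell_{\lambda^j}+1/2-\varepsilon}{\lambda^j}$. Putting the two together then yields $\fGamma_j\equiv1$ on the stated region, establishing \eqref{eq:fGamma==1}.

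The hard part will be reconciling the two angular thresholds that surface in part (ii): vanishing of the second sum needs $|x|\le\frac{\lambda^j}{\ell_{\lambda^j}+1/2+\varepsilon}$, while \eqref{eq:gamma:station} delivers the first sum equal to $1$ only for $|x|\le\frac{\ell_{\lambda^j}+1/2-\varepsilon}{\lambda^j}$. To land exactly on the single bound appearing in \eqref{eq:fGamma==1}, I expect to have to verify that the former threshold does not exceed the latter, i.e. that $\lambda^{2j}\le(\ell_{\lambda^j}+\tfrac12)^2-\varepsilon^2$, using the defining estimate $\lambda^j-\tfrac12-\varepsilon<\ell_{\lambda^j}\le\lambda^j+\tfrac12-\varepsilon$. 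Pinning down this comparison, and with it the precise support overlap between the two cones, is the one genuinely delicate point; the remaining estimates are a direct transcription of those for $\fGamma^j$ in Proposition~\ref{prop:fGamma-non-station}.
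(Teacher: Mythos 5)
Your treatment of the smoothness, of part (i), and of the structural decomposition in part (ii) (the sum over $\fgamma_\varepsilon(\lambda^j\xi_1/\xi_2+\ell)$ vanishes, the sum over $\fgamma_\varepsilon(\lambda^j\xi_2/\xi_1+\ell)$ should equal $1$) faithfully reproduces the paper's argument, and your positivity check is if anything more careful than the paper's. The genuine problem is exactly the point you defer at the end. Your argument needs the threshold $T_1:=\frac{\lambda^j}{\ell_{\lambda^j}+1/2+\varepsilon}$ (below which the second angular sum vanishes) to be dominated by the threshold $T_2:=\frac{\ell_{\lambda^j}+1/2-\varepsilon}{\lambda^j}$ (below which \eqref{eq:gamma:station} gives the first angular sum equal to $1$), i.e. $\lambda^{2j}\le(\ell_{\lambda^j}+\tfrac12)^2-\varepsilon^2$. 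You say you ``expect to have to verify'' this, but you do not, and in fact it fails for admissible parameters: take $\lambda^j=2.6$ and $\varepsilon=\tfrac12$, so that $\ell_{\lambda^j}=\lfloor 2.6\rfloor=2$ and $(\ell_{\lambda^j}+\tfrac12)^2-\varepsilon^2=6<6.76=\lambda^{2j}$. On the band $T_2<x\le T_1$, $x=\xi_2/\xi_1$, one has $\lambda^j x\in(\ell_{\lambda^j}+\tfrac12-\varepsilon,\ \ell_{\lambda^j}+\tfrac12+\varepsilon)$, so the omitted term $|\fgamma_\varepsilon(\lambda^j x-\ell_{\lambda^j}-1)|^2$ of the full partition of unity $\sum_{\ell\in\Z}|\fgamma_\varepsilon(\cdot+\ell)|^2\equiv1$ is strictly positive; hence the first inner sum is strictly less than $1$ while the second is still $0$, and $\fGamma_j<1$ there. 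So the proof cannot be completed as outlined: one must either impose $\lambda^{2j}\le(\ell_{\lambda^j}+\tfrac12)^2-\varepsilon^2$ as a hypothesis (it does hold automatically when $\lambda$ is an integer, since then $\ell_{\lambda^j}=\lambda^j$), or replace the bound in \eqref{eq:fGamma==1} by $\min\{T_1,T_2\}$.

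Two further remarks. First, you silently replaced the $\max$ in \eqref{eq:fGamma==1} by a $\min$ (via ``reduce by flip symmetry to the horizontal cone''); since $\max\{|\xi_2/\xi_1|,|\xi_1/\xi_2|\}\ge1>T_1$, the region as literally printed is empty, so the $\min$ reading is certainly the intended one, but this should be stated. Second, the paper's own proof of (ii) argues only that the seamline terms of one cone reach at most to $|\xi_1/\xi_2|=T_1$ inside the other cone, and tacitly assumes that each cone's partial sum equals $1$ all the way up to that line --- which is precisely the nesting $T_1\le T_2$ you flagged. You have therefore correctly isolated the one non-routine step; the defect of the proposal is that it stops at identifying that step rather than resolving it, and as stated the step cannot be resolved in full generality.
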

\begin{proof}
The proof for $\fGamma_j\in C^\infty(\R^2\backslash\{0\})$ is
similar to that for $\fGamma^j$ in
Proposition~\ref{prop:fGamma-non-station}. By its definition in
\eqref{def:fGamma}, $\fGamma_j(\exchg\cdot) =\fGamma_j$ and
$\fGamma_j(t\cdot) =\fGamma_j$ for $t\neq0$.

By the property of $\gamma_\varepsilon$ as in
\eqref{eq:gamma:station}, it is easily seen that $0<\fGamma_j\le 2$.
Now to see that \eqref{eq:fGamma==1} holds, we  notice that the
seamline element on the  horizontal cone  with respect to
$\ell=-\ell_{\lambda^j}$ has part of the piece overlapping with the
other cone. By the support of $\fgamma_\varepsilon$, for this
seamline element, we have
\[
\lambda^j\xi_2/\xi_1- \ell_{\lambda^j}\le \frac12+\varepsilon,
\]
which implies $\xi_2/\xi_1\le \frac{1/2+\varepsilon+
\ell_{\lambda^j}}{\lambda^j}$. Hence, it only affects elements in
other cone with support satisfying  $\xi_1/\xi_2>
\frac{\lambda^j}{1/2+\varepsilon+ \ell_{\lambda^j}}$. By symmetry,
the same result holds for seamline elements on vertical cone
affecting the horizontal cone. Therefore, \eqref{eq:fGamma==1}
holds.
\end{proof}
%

 Since $0<\fGamma_j\le 2$ and $\fGamma_j$ is in
$C^\infty(\R^2\backslash\{0\})$, we have that $\sqrt{\fGamma_j}$ is
infinitely differentiable for all $\xi\in\R^2\backslash\{0\}$. Let
$\da_\lambda, \db_\lambda, \dm_\lambda, \dn_\lambda, \dd_\lambda$
with $\lambda>1$ be defined as before. Let
$\fPsi_j:=\{\fpsi^{j,\ell}(\sh_\ell\cdot):
\ell=-\ell_{\lambda^j},\ldots,\ell_{\lambda^j}\}$ with
\begin{equation}\label{def:fpsij-station}
\begin{aligned}\fpsi^{j,\ell}(\xi) &:=
\fomega_{\lambda,t,\rho}(\dd_\lambda^{-j}\sh_{-\ell}\xi)\frac{\fgamma_{\varepsilon}(\xi_2/\xi_1)}{\sqrt{\fGamma_j((\sh_\ell\db_\lambda^j)^{-1}\cdot)}}
\\&=\fomega_{\lambda,t,\rho}(\xi_1,
\lambda^{-j}(-\xi_1\ell+\xi_2))\frac{\fgamma_{\varepsilon}(\xi_2/\xi_1)}{\sqrt{\fGamma_j((\sh_\ell\db_\lambda^j)^{-1}\cdot)}},\quad
\xi\in\R^2\backslash\{0\},
\end{aligned}
\end{equation}
and $\fpsi^{j,\ell}(0):=0$,
 which gives
\[
\begin{aligned}
\fpsi^{j,\ell}(\sh_\ell\db_\lambda^j\xi)
=\fomega_{\lambda,t,\rho}(\dn_\lambda^j\xi)\frac{\fgamma_{\varepsilon}(\lambda^j\xi_2/\xi_1+\ell)}{\sqrt{\fGamma_j(\xi)}}.
\end{aligned}
\]
By the properties of $\fGamma_j$ and that the support of
$\fomega_{\lambda,t,\rho}$ is away from the origin, we see that
$\fpsi^{j,\ell}$ are functions in $C^\infty(\R^2)$. We can define
the following system:
\begin{equation}
\begin{aligned}\label{def:DFAS-cone-shearlet-smooth-station}
\FASS_J(\fphi;\{\fPsi_j\}_{j=J}^\infty) := &
\{\fphi_{\dn_\lambda^J;0,\vk}:\vk\in\Z^2\}\cup\{\fh_{\db_\lambda^j;0,\vk},\fh_{\db_\lambda^j\exchg;0,\vk}:
\vk\in\Z^2,\fh\in\fPsi_j \}_{j=J}^\infty
\\=&\{\fphi_{\dn_\lambda^J;0,\vk}:\vk\in\Z^2\}\cup\{\fpsi^{j,\ell}_{\sh_\ell\db_\lambda^j;0,\vk},\fpsi^{j,\ell}_{\sh_\ell\db_\lambda^j\exchg;0,\vk}:
 \vk\in\Z^2, \ell=-\ell_{\lambda^j},\ldots,\ell_{\lambda^j} \}_{j=J}^\infty.
\end{aligned}
\end{equation}
At first glance, such a system does not have  shear structure at all due to that the function $\fomega_{\lambda,t,\rho}$ is not shear-invariant. However, we next shall show that such a system  do have certain affine and shear structure in the sense
 that a sub-system of this system is from shear and dilation of one single generator.
\begin{theorem}\label{thm:cone-shear-station}
 Let $\lambda>1$ and $0<t,\rho\le 1$. Let $\FASS_J(\fphi;\{\fPsi_j\}_{j=J}^\infty)$ be defined as in \eqref{def:DFAS-cone-shearlet-smooth-station}
 with $\fphi=\falpha_{\lambda,t,\rho}\otimes\falpha_{\lambda,t,\rho}$ and $\fpsi^{j,\ell}$ being given by \eqref{def:fpsij-station}. Then
$\FASS_J(\fphi;\{\fPsi_j\}_{j=J}^\infty)$ is a frequency-based
affine shear tight frame for $L_2(\R^2)$ for all $J\ge 0$. All
elements in $\FASS_J(\fphi;\{\fPsi_j\}_{j=J}^\infty)$ are compactly
supported functions in $C^\infty(\R^2)$. Moreover, we have
\[
\{\fzeta(\sh_\ell\cdot):\ell=- r_j,\ldots, r_j\}\subseteq \fPsi_j,
\;\; j\ge J,
\]
where $r_j:=\lfloor\lambda^{j-2}(1-t)\rho-(1/2+\varepsilon)\rfloor$
and $\fzeta(\xi): =
\fbeta_{\lambda,t,\rho}(\xi_1)\fgamma_\varepsilon(\xi_2/\xi_1)$,
$\xi\in\R^2$. In other words,
\[
\{\fzeta_{\sh_\ell\db_\lambda^j;0,\vk},
\fzeta_{\sh_\ell\db_\lambda^j\exchg;0,\vk}: \vk\in\Z^2,
\ell=-r_j,\ldots,r_j\}_{j=J}^
\infty\subseteq\FASS_J(\fphi;\{\fPsi_j\}_{j=J}^\infty).
\]
\end{theorem}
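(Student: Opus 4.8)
The plan is to reduce everything to Corollary~\ref{cor:positive} (applied with $J_0=0$ and $\fphi^j\equiv\fphi$), since all the generators $\fphi=\falpha_{\lambda,t,\rho}\otimes\falpha_{\lambda,t,\rho}$ and $\fpsi^{j,\ell}$ are nonnegative. Condition \eqref{eq:SimpleCond3} is immediate: $\fphi$ is bounded and $\falpha_{\lambda,t,\rho}\equiv1$ near the origin, so $\fg(\xi):=\lim_{j\to\infty}|\fphi(\dn_\lambda^j\xi)|^2=|\fphi(0)|^2=1$ a.e., and Lemma~\ref{lemma:fphi} applies.

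The heart of the tight-frame claim is the refinement identity \eqref{eq:SimpleCond2}. Using the form $\fpsi^{j,\ell}(\sh_\ell\db_\lambda^j\xi)=\fomega_{\lambda,t,\rho}(\dn_\lambda^j\xi)\fgamma_\varepsilon(\lambda^j\xi_2/\xi_1+\ell)/\sqrt{\fGamma_j(\xi)}$, I would observe that $\dn_\lambda=\lambda^{-2}I_2$ commutes with $\exchg$ and that both $\fomega_{\lambda,t,\rho}$ and $\fGamma_j$ are $\exchg$-invariant; hence each flipped term carries the very same factor $|\fomega_{\lambda,t,\rho}(\dn_\lambda^j\xi)|^2/\fGamma_j(\xi)$, now with $\xi_2/\xi_1$ replaced by $\xi_1/\xi_2$. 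Summing over $\ell$ and recognizing, from the definition \eqref{def:fGamma}, that $\fGamma_j$ is exactly the sum of its horizontal and vertical $\fgamma_\varepsilon$-blocks, the total scale-$j$ contribution collapses:
\[
\sum_{\ell}\big(|\fpsi^{j,\ell}(\sh_\ell\db_\lambda^j\xi)|^2+|\fpsi^{j,\ell}(\sh_\ell\db_\lambda^j\exchg\xi)|^2\big)=|\fomega_{\lambda,t,\rho}(\dn_\lambda^j\xi)|^2,
\]
the cancellation being legitimate since $\fGamma_j>0$ by Proposition~\ref{prop:fGamma}. By the defining relation \eqref{def:fomega}, $|\fomega_{\lambda,t,\rho}(\dn_\lambda^j\xi)|^2=|\fphi(\dn_\lambda^{j+1}\xi)|^2-|\fphi(\dn_\lambda^j\xi)|^2$, which is precisely \eqref{eq:SimpleCond2}.

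For the non-overlap condition \eqref{eq:SimpleCond1} I would argue by support. With $\rho\le1$ and $\lambda>1$ one has $\supp\fphi\subseteq[-\lambda^{-2}\rho\pi,\lambda^{-2}\rho\pi]^2\subseteq[-\pi,\pi]^2$; for each $\fpsi^{j,\ell}$ the $\fomega_{\lambda,t,\rho}$-factor forces $|\xi_1|\le\rho\pi$ while $\fgamma_\varepsilon(\xi_2/\xi_1)$ forces $|\xi_2/\xi_1|\le1/2+\varepsilon$, so that with $\rho\le1$ and $\varepsilon\le1/2$ we get $\supp\fpsi^{j,\ell}\subseteq[-\pi,\pi]^2$ up to a null set; hence no generator overlaps a nontrivial $2\pi$-shift of itself. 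Corollary~\ref{cor:positive} then gives the tight-frame property for every $J\ge0$. Smoothness and compact support of all elements are inherited from $\fphi,\fomega_{\lambda,t,\rho},\fgamma_\varepsilon\in C^\infty$, from $1/\sqrt{\fGamma_j}\in C^\infty(\R^2\setminus\{0\})$ (Proposition~\ref{prop:fGamma}), and from the fact that $\fomega_{\lambda,t,\rho}$ vanishes near the origin.

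The most delicate part is the shear-invariant subsystem. Since $\fPsi_j=\{\fpsi^{j,\ell}(\sh_\ell\cdot)\}$, it suffices to show $\fpsi^{j,\ell}=\fzeta$ for $|\ell|\le r_j$, i.e. that $\fpsi^{j,\ell}(\sh_\ell\db_\lambda^j\xi)=\fbeta_{\lambda,t,\rho}(\lambda^{-2j}\xi_1)\fgamma_\varepsilon(\lambda^j\xi_2/\xi_1+\ell)$ on the support of the $\fgamma_\varepsilon$-factor. This reduces to two facts there. First, $\fGamma_j(\xi)=1$: the angular support obeys $|\xi_2/\xi_1|\le\lambda^{-j}(|\ell|+1/2+\varepsilon)$, which for $|\ell|\le r_j$ keeps $\xi$ in the interior of the horizontal cone, where the vertical $\fgamma_\varepsilon$-block vanishes and the horizontal one sums to unity (Proposition~\ref{prop:fGamma}). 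Second, $\fomega_{\lambda,t,\rho}(\dn_\lambda^j\xi)=\fbeta_{\lambda,t,\rho}(\lambda^{-2j}\xi_1)$: this holds once $|\lambda^{-2j}\xi_2|\le\lambda^{-2}(1-t)\rho\pi$, so that the second-variable factors $\falpha_{\lambda,t,\rho}(\lambda^{-2j}\xi_2)$ and $\falpha_{\lambda,t,\rho}(\lambda^{-2(j+1)}\xi_2)$ both equal $1$ and \eqref{def:fomega} collapses via $|\falpha_{\lambda,t,\rho}(\lambda^{-2}s)|^2-|\falpha_{\lambda,t,\rho}(s)|^2=|\fbeta_{\lambda,t,\rho}(s)|^2$. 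Combining $|\lambda^{-2j}\xi_1|\le\rho\pi$ with the angular bound yields $|\lambda^{-2j}\xi_2|\le\lambda^{-j}(|\ell|+1/2+\varepsilon)\rho\pi$, so the binding requirement is $\lambda^{-j}(|\ell|+1/2+\varepsilon)\le\lambda^{-2}(1-t)$, which in particular covers the range $|\ell|\le r_j$ stated in the theorem. I expect this simultaneous support bookkeeping---certifying that for $|\ell|\le r_j$ neither the cross-cone overlap of $\fGamma_j$ nor the tapering zone of $\fomega_{\lambda,t,\rho}$ is ever reached---to be the main obstacle; by contrast the telescoping identity behind \eqref{eq:SimpleCond2} is the technical core but becomes routine once the $\exchg$-symmetry is exploited.
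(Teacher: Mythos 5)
Your argument follows the paper's proof essentially step for step: reduction to Corollary~\ref{cor:positive} with nonnegative generators, the telescoping identity for \eqref{eq:SimpleCond2} obtained by exploiting the $\exchg$-invariance of $\fomega_{\lambda,t,\rho}$ and $\fGamma_j$ so that the sum over $\ell$ of the horizontal and flipped blocks reproduces $\fGamma_j(\xi)$ and cancels the denominator, the support argument for \eqref{eq:SimpleCond1}, and, for the subsystem claim, the identification $\fomega_{\lambda,t,\rho}(\dn_\lambda^j\xi)=\fbeta_{\lambda,t,\rho}(\lambda^{-2j}\xi_1)$ once $|\lambda^{-2j}\xi_2|\le\lambda^{-2}(1-t)\rho\pi$. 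Your bookkeeping for the admissible range of $\ell$ is in fact slightly sharper than the paper's (you get $|\ell|\le\lambda^{j-2}(1-t)-1/2-\varepsilon$, without the factor $\rho$, because you use $|\lambda^{-2j}\xi_1|\le\rho\pi$ rather than $\pi$), and this indeed covers the stated $r_j$.

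The one step that is not actually closed is the claim $\fGamma_j\equiv1$ on $\supp\fpsi^{j,\ell}(\sh_\ell\db_\lambda^j\cdot)$ for $|\ell|\le r_j$, which you correctly isolate as one of the two facts needed but then dispatch by citing Proposition~\ref{prop:fGamma}. Item (ii) of that proposition (as printed with $\max$ it describes an empty set; it is surely meant with $\min$) yields $\fGamma_j\equiv1$ only for $|\xi_2/\xi_1|\le\lambda^{j}/(\ell_{\lambda^j}+1/2+\varepsilon)$, whereas the angular support of the $\ell$-th piece reaches $|\xi_2/\xi_1|=\lambda^{-j}(|\ell|+1/2+\varepsilon)$; so what is needed is the inequality $(|\ell|+1/2+\varepsilon)(\ell_{\lambda^j}+1/2+\varepsilon)\le\lambda^{2j}$, which you do not verify and which is not automatic under the stated hypotheses. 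For instance, with $j=0$, $\lambda=1.1$, $\varepsilon=0.2$, $\rho=1$ and $t$ small one has $r_0=0$ and $\ell_{\lambda^0}=1$, but on the set where $\xi_2/\xi_1$ lies roughly between $0.59$ and $0.7$ the cross-cone block $|\fgamma_\varepsilon(\xi_1/\xi_2-1)|^2$ is strictly positive, so $\fGamma_0>1$ there and $\fpsi^{0,0}\neq\fzeta$ on a set of positive measure. To be fair, the paper's own proof has exactly the same lacuna: its displayed computation of $\fpsi^{j,\ell}_{\sh_\ell\db_\lambda^j;0,\vk}$ silently drops the factor $1/\sqrt{\fGamma_j(\xi)}$ present in \eqref{def:fpsij-station}. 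So your proposal is no weaker than the published argument, but a complete proof requires either verifying the displayed inequality (e.g.\ by imposing a mild lower bound on $\lambda^j$ or an upper bound on $\varepsilon$) or shrinking $r_j$ so that the angular support stays inside the region where the vertical $\fgamma_\varepsilon$-blocks of $\fGamma_j$ vanish.
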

\begin{proof} By our construction, we have
\[
\begin{aligned}
&|\fphi(\dn_\lambda^j\xi)|^2+\sum_{\ell=-\ell_{\lambda^j}}^{\ell_{\lambda^j}}
\Big[|\fpsi^{j,\ell}(\sh_\ell\db_{\lambda}^j\xi)|^2+
|\fpsi^{j,\ell}(\sh_\ell\db_{\lambda}^j\exchg\xi)|^2\Big]
\\=&
|\fphi(\dn_\lambda^j\xi)|^2+\frac{|\fomega_{\lambda,t,\rho}(\dn^j\xi)|^2}{\fGamma_j(\xi)}\sum_{\ell=-\ell_{\lambda^j}}^{\ell_{\lambda^j}}
\Big[|\fgamma_\varepsilon(\lambda^j\xi_2/\xi_1+\ell)|^2+
|\fgamma_\varepsilon(\lambda^j\xi_1/\xi_2+\ell)|^2\Big]
\\=&
|\fphi(\dn_\lambda^j\xi)|^2+|\fomega_{\lambda,t,\rho}(\dn^j\xi)|^2=|\fphi(\dn^{j+1}\xi)|^2,\quad
\xi\in\R^2.
\end{aligned}
\]
Hence, \eqref{eq:SimpleCond2} holds. By the definition of $\fphi$,
\eqref{eq:SimpleCond3} also holds. Note that all generators
$\fpsi^{j,\ell}$ are nonnegative and are defined in
$[-\rho\pi,\rho\pi]^2$ with $\rho\le1$. Hence,
\eqref{eq:SimpleCond1} is true. Now, by
Corollary~\ref{cor:positive}, we conclude that
$\FASS_J(\fphi;\{\fPsi_j\}_{j=J}^\infty)$ is a frequency-based
affine shear tight frame for $L_2(\R^2)$ for all $J\ge 0$. Since all
generators $\fphi, \fpsi^{j,\ell}$ are compactly supported functions
in $C^\infty(\R^2)$, all elements in
$\FASS_J(\fphi;\{\fPsi_j\}_{j=J}^\infty)$ are compactly supported
functions in $C^\infty(\R^2)$.

By the definition of $\fomega_{\lambda,t,\rho}$, it is easy to see that
\[
|\fomega_{\lambda,t,\rho}(\xi_1,\xi_2)|^2 =|\falpha_{\lambda,t,\rho}(\xi_1)\fbeta_{\lambda,t,\rho}(\xi_2)|^2+ |\fbeta_{\lambda,t,\rho}(\xi_1)\falpha_{\lambda,t,\rho}(\xi_2)|^2+|\fbeta_{\lambda,t,\rho}(\xi_1)\fbeta_{\lambda,t,\rho}(\xi_2)|^2.
\]
And for $|\xi_2|\le \lambda^{-2}(1-t)\rho\pi$, we have
\[
\fomega_{\lambda,t,\rho}(\xi_1,\xi_2) = \fbeta_{\lambda,t,\rho}(\xi_1)\falpha_{\lambda,t,\rho}(\xi_2)=\fbeta_{\lambda,t,\rho}(\xi_1).
\]
Consequently, if the support $\{\xi=(\xi_1,\xi_2)\in\R^2:
\xi\in\supp\fpsi^{j,\ell}_{\sh_\ell\db_\lambda^j;0,\vk}\}$ satisfies
$|\xi_2|\le \lambda^{2j-2}(1-t)\rho\pi$, then we have
\[
\begin{aligned}
\fpsi^{j,\ell}_{\sh_\ell\db_\lambda^j;0,\vk}(\xi)
&= \lambda^{-3j/2}\fomega_{\lambda,t,\rho}(\lambda^{-2j}\xi)\fgamma_\varepsilon(\lambda^j\xi_2/\xi_1+\ell)e^{-i\vk\cdot\sh_\ell\db_\lambda^j\xi}\\
&= \lambda^{-3j/2}\fbeta_{\lambda,t,\rho}(\lambda^{-2j}\xi_1)\fgamma_\varepsilon(\lambda^j\xi_2/\xi_1+\ell)e^{-i\vk\cdot\sh_\ell\db_\lambda^j\xi}\\
&=\fzeta_{\sh_\ell\db_\lambda^j;0,\vk}(\xi)
\end{aligned}
\]
with
$\fzeta(\xi):=\fbeta_{\lambda,t,\rho}(\xi_1)\fgamma_\varepsilon(\xi_2/\xi_1)$,
 $\xi\in\R^2$. Now let us find the range of $\ell$ such that the
support constrain holds.

At scale $j$, we have
\[
\supp \fomega_{\lambda,t,\rho}(\lambda^{-2j}\cdot)\subseteq [-\lambda^{2j}\rho\pi,\lambda^{2j}\rho\pi]^2\backslash[-\lambda^{2j-2}(1-t)\rho\pi,\lambda^{2j-2}(1-t)\rho\pi]^2.
\]
Then, the support constrain means that at scale $j$, one needs $|\xi_2/\xi_1|\le \lambda^{-2}(1-t)\rho$. Hence, the support of
 $\fgamma_{\varepsilon}(\lambda^j\xi_2/\xi_1+\ell)$ must satisfy
\[
-\lambda^{-2}(1-t)\rho\le -\lambda^{-j}(1/2+\varepsilon+\ell)\le \xi_2/\xi_1\le \lambda^{-j}(1/2+\varepsilon-\ell)\le \lambda^{-2}(1-t)\rho.
\]
Consequently, we obtain
\[
-\lambda^{j-2}(1-t)\rho+(1/2+\varepsilon)\le\ell\le \lambda^{j-2}(1-t)\rho-(1/2+\varepsilon).
\]
That is, $|\ell|\le \lambda^{j-2}(1-t)\rho-(1/2+\varepsilon)$. In
summary, letting $r_j:=\lfloor
\lambda^{j-2}(1-t)\rho-(1/2+\varepsilon)\rfloor$, we have
\[
\{\fzeta(\sh_\ell\cdot):\ell=- r_j,\ldots, r_j\}\subseteq \fPsi_j, \; j\ge J,
\]
and
\[
\{\fzeta_{\sh_\ell\db_\lambda^j;0,\vk},
\fzeta_{\sh_\ell\db_\lambda^j\exchg;0,\vk}: j\ge J,\vk\in\Z^2,
\ell=-r_j,\ldots,r_j\}\subseteq\FASS_J(\fphi;\{\fPsi_j\}_{j=J}^\infty).
\]
\end{proof}
Note that when $\ell=-r_j$, The support
$\fzeta(\sh_\ell\db_\lambda^j\xi)=\fbeta_{\lambda,t,\rho}(\lambda^{-2j}\xi_1)\gamma_\varepsilon(\lambda^j\xi_2/\xi_1-r_j)$ satisfies
\[
\begin{aligned}
\xi_2/\xi_1&\le \lambda^{-j}(r_j+1/2+\varepsilon)\\&
\le \lambda^{-j}(\lfloor\lambda^{j-2}(1-t)\rho-1/2-\varepsilon\rfloor+1/2+\varepsilon)
\\&
\le \lambda^{-2}(1-t)\rho.
\end{aligned}
\]
Hence, by the symmetry property of $\fGamma_j$, we see that the
shear operation generates a subsystem of
$\FASS_J(\fphi;\{\fPsi_j\}_{j=0}^\infty)$  inside the cone area
$\{\xi\in\R^2: \max\{|\xi_2/\xi_1|,|\xi_1/\xi_2|\}\le
\lambda^{-2}(1-t)\rho\}$.

\subsection{Connections to other directional mutliscale represention systems}
\label{subsec:otherDMRS}
In this subsection, we shall discuss the connections of our affine
shear tight frames to those shearlet systems in
\cite{GKL06,GuoLabate:2007} or shearlet-like systems in
\cite{GuoLabate:2012}.

Define corner pieces
\begin{equation}\label{def:gamma:corner}
\begin{aligned}
\fgamma_\lambda^+(x) &:=
\begin{cases}
\fgamma_\varepsilon(x)& \mbox{if }-1/2-\varepsilon\le x\le -1/2+\varepsilon;\\
1&\mbox{if } -1/2+\varepsilon\le x \le \lambda-\ell_\lambda;\\
0 & \mbox{otherwise},
\end{cases}
\\
\fgamma_\lambda^-(x) &:=
\begin{cases}
\fgamma_\varepsilon(x)&\mbox{if } 1/2-\varepsilon\le x\le 1/2+\varepsilon;\\
1&\mbox{if } -\lambda+\ell_\lambda\le x\le 1/2-\varepsilon;\\
0 & \mbox{otherwise}.
\end{cases}
\end{aligned}
\end{equation}
These are the corner pieces that  shall be used  to achieve
tightness of the system or for gluing two seamline elements together
 smoothly. Let
$\{\falpha_{\lambda,t,\rho},\fbeta_{\lambda,t,\rho},
\fgamma_{\varepsilon},\fgamma_\lambda^\pm\}$ be  defined as before.
Similar to the half pieces for the system  generated by the
characteristic functions as in \eqref{def:shannonPhiPsi},  we define
$\fpsi, \fpsi^{j,\pm\ell_{\lambda^j}}$ by
\[
\fpsi(\xi):=\fbeta_{\lambda,t,\rho}(\xi_1)\fgamma_\varepsilon(\xi_2/\xi_1),\quad
\fpsi^{j,\pm
\ell_{\lambda^j}}(\xi):=\fbeta_{\lambda,t,\rho}(\xi_1)\gamma_{\lambda^j}^\mp(\xi_2/\xi_1),\quad\xi\neq0
\]
and $\fpsi(0):=0$. The scaling function $\fphi$ is defined to be
\begin{equation}\label{def:fphi:cone-cut}
\fphi:=\fphi^h+\fphi^v
\end{equation}
with $\fphi^h(\xi)
=\falpha_{\lambda,t,\rho}(\xi_1)\chi_{\{\xi\in\R^2: |\xi_2/\xi_1|\le
1\}}(\xi)$, $\xi\in\R^2$ and $\fphi^v =
\fphi^h(\exchg\cdot)=\falpha_{\lambda,t,\rho}(\xi_2)\chi_{\{\xi\in\R^2:|\xi_1/\xi_2|\le
1\}}(\xi)$, $\xi\in\R^2$. Now define
\begin{equation}\label{def:fPsij:cone-cut}
\fPsi_j:=\{\fpsi(\sh_\ell\cdot): \ell=-\ell_{\lambda^j}+1,\ldots,\ell_{\lambda^j}-1\}\cup\{\fpsi^{j,\ell}(\sh_\ell\cdot): \ell=\pm \ell_{\lambda^j}\}.
\end{equation}
Note that $\fpsi$ is smooth while the corner pieces $\fpsi^{j,\pm
\ell_{\lambda^j}}$ are not smooth.

We have the following result.
\begin{corollary}\label{cor:DFAS-cone-shearlets}
Let $\da_\lambda, \db_\lambda, \dm_\lambda, \dn_\lambda, \sh_\ell,
\exchg$ be defined as before with $\lambda>1$. Let $0<t,\rho\le1$
and $0< \varepsilon\le 1/2$. Then the system $\FASS_J(\fphi;
\{\fPsi_j\}_{j=J}^\infty)$ defined as in \eqref{def:FAS_J} with
$\fphi, \fPsi_j$ being given by \eqref{def:fphi:cone-cut},
\eqref{def:fPsij:cone-cut}, respectively, is a frequency-based
affine shear tight frame for $L_2(\R^2)$ for all $J\ge 0$.
\end{corollary}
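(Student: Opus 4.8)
The plan is to apply Corollary~\ref{cor:positive}. All the generators $\fphi=\fphi^h+\fphi^v$, $\fpsi$, and $\fpsi^{j,\pm\ell_{\lambda^j}}$ defined in \eqref{def:fphi:cone-cut}--\eqref{def:fPsij:cone-cut} are products of the nonnegative building blocks $\falpha_{\lambda,t,\rho},\fbeta_{\lambda,t,\rho},\fgamma_\varepsilon,\fgamma_{\lambda^j}^\pm$ with indicator functions, hence nonnegative. With the stationary choice $\fphi^j\equiv\fphi$ and with $r_j=\ell_{\lambda^j}-1$, $s_j=\ell_{\lambda^j}$, it therefore suffices to verify the three conditions \eqref{eq:SimpleCond1}, \eqref{eq:SimpleCond2}, and \eqref{eq:SimpleCond3}.

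First I would dispose of the two easy conditions. For \eqref{eq:SimpleCond3} I use Lemma~\ref{lemma:fphi}: since $\fphi\equiv\fphi^h+\fphi^v$ is a fixed function bounded by a constant on any box, \eqref{eq:SimpleCond3Onfphi} is immediate; and because $\dn_\lambda^j\xi=\lambda^{-2j}\xi\to0$ while the cone membership $|\xi_2/\xi_1|\le1$ or $|\xi_1/\xi_2|\le1$ is invariant under the isotropic scaling $\dn_\lambda$, for a.e.\ fixed $\xi\neq0$ one has $|\fphi(\dn_\lambda^j\xi)|^2\to|\falpha_{\lambda,t,\rho}(0)|^2=1$, so $\fg\equiv1$ a.e.\ and Lemma~\ref{lemma:fphi} yields \eqref{eq:SimpleCond3}. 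For \eqref{eq:SimpleCond1}, a support count using $\rho\le1$ and $0<\varepsilon\le1/2$ shows $\supp\fphi\subseteq[-\lambda^{-2}\rho\pi,\lambda^{-2}\rho\pi]^2$ and $\supp\fpsi,\supp\fpsi^{j,\pm\ell_{\lambda^j}}\subseteq[-\rho\pi,\rho\pi]^2\subseteq[-\pi,\pi]^2$; each box has side length at most $2\pi$, so no nonzero $2\pi\Z^2$-translate of a generator overlaps it except on a null set.

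The real work is \eqref{eq:SimpleCond2}. Writing $|\fphi(\dn_\lambda^j\xi)|^2=|\falpha_{\lambda,t,\rho}(\lambda^{-2j}\xi_1)|^2\chi_{\{|\xi_2/\xi_1|\le1\}}+|\falpha_{\lambda,t,\rho}(\lambda^{-2j}\xi_2)|^2\chi_{\{|\xi_1/\xi_2|\le1\}}$ (the two cones meet only on the null seamlines), the earlier identity $|\falpha_{\lambda,t,\rho}(\xi)|^2+|\fbeta_{\lambda,t,\rho}(\xi)|^2=|\falpha_{\lambda,t,\rho}(\lambda^{-2}\xi)|^2$ shows that the difference $|\fphi(\dn_\lambda^{j+1}\xi)|^2-|\fphi(\dn_\lambda^j\xi)|^2$ equals $|\fbeta_{\lambda,t,\rho}(\lambda^{-2j}\xi_1)|^2\chi_{\{|\xi_2/\xi_1|\le1\}}+|\fbeta_{\lambda,t,\rho}(\lambda^{-2j}\xi_2)|^2\chi_{\{|\xi_1/\xi_2|\le1\}}$. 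On the other hand $\fpsi^{j,\ell}(\sh_\ell\db_\lambda^j\xi)=\fbeta_{\lambda,t,\rho}(\lambda^{-2j}\xi_1)\,\fgamma_\varepsilon(\lambda^j\xi_2/\xi_1+\ell)$ for $|\ell|<\ell_{\lambda^j}$ and $\fbeta_{\lambda,t,\rho}(\lambda^{-2j}\xi_1)\,\fgamma_{\lambda^j}^\mp(\lambda^j\xi_2/\xi_1\pm\ell_{\lambda^j})$ for $\ell=\pm\ell_{\lambda^j}$, so the horizontal-cone sum factors as $|\fbeta_{\lambda,t,\rho}(\lambda^{-2j}\xi_1)|^2$ times an angular sum in $y=\lambda^j\xi_2/\xi_1$.

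Thus the crux is the angular partition of unity for the nonsmooth corner pieces \eqref{def:gamma:corner}, namely $\sum_{\ell=-\ell_{\lambda^j}+1}^{\ell_{\lambda^j}-1}|\fgamma_\varepsilon(y+\ell)|^2+|\fgamma_{\lambda^j}^+(y-\ell_{\lambda^j})|^2+|\fgamma_{\lambda^j}^-(y+\ell_{\lambda^j})|^2=1$ for $|y|\le\lambda^j$ (the analogue of \eqref{eq:gamma:non-station}), while the whole sum vanishes for $|y|>\lambda^j$. I would prove the identity by splitting $y$ into three regions: in the bulk $|y|\le\ell_{\lambda^j}-1/2-\varepsilon$ the corner pieces vanish and the truncated sum equals the full partition $\sum_{\ell\in\Z}|\fgamma_\varepsilon(y+\ell)|^2=1$; on the plateau $\ell_{\lambda^j}-1/2+\varepsilon\le y\le\lambda^j$ the corner piece equals $1$ and every shear term vanishes; and in the transition window $y\in[\ell_{\lambda^j}-1/2-\varepsilon,\ell_{\lambda^j}-1/2+\varepsilon]$ only the term $\ell=-\ell_{\lambda^j}+1$ and the corner piece survive, and with $u=y-\ell_{\lambda^j}+1/2$ they equal $\nu(-u/\varepsilon)$ and $\nu(u/\varepsilon)$, whose squares sum to $1$ by $|\nu(x)|^2+|\nu(-x)|^2=1$. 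Because the plateau of $\fgamma_{\lambda^j}^+$ reaches exactly $y=\lambda^j$, i.e.\ exactly the seamline $\xi_2/\xi_1=1$, the angular sum equals $\chi_{\{|\xi_2/\xi_1|\le1\}}$; the flipped computation handles the vertical cone, and matching with the reduced difference establishes \eqref{eq:SimpleCond2}. The main obstacle is precisely this corner-piece bookkeeping---checking that the nonsmooth plateau meets the seamline exactly and dovetails with the cone-cut indicators in $\fphi$, which is where the definition $\ell_{\lambda^j}=\lfloor\lambda^j+1/2-\varepsilon\rfloor$ and the plateau length $\lambda^j-\ell_{\lambda^j}$ enter. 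A final appeal to Corollary~\ref{cor:positive} with $\fphi^j\equiv\fphi$ then completes the proof.
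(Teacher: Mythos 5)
Your proposal is correct and follows essentially the same route as the paper: reduce everything to Corollary~\ref{cor:positive} for nonnegative generators, verify \eqref{eq:SimpleCond1} and \eqref{eq:SimpleCond3} by support and limit considerations, and establish \eqref{eq:SimpleCond2} by factoring out $|\fbeta_{\lambda,t,\rho}(\lambda^{-2j}\xi_1)|^2$ and invoking the angular partition of unity $\sum_{\ell}|\fgamma_\varepsilon(\lambda^j\xi_2/\xi_1+\ell)|^2+|\fgamma_{\lambda^j}^\pm(\lambda^j\xi_2/\xi_1\mp\ell_{\lambda^j})|^2=\chi_{\{|\xi_2/\xi_1|\le1\}}$ together with $|\falpha_{\lambda,t,\rho}|^2+|\fbeta_{\lambda,t,\rho}|^2=|\falpha_{\lambda,t,\rho}(\lambda^{-2}\cdot)|^2$. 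The only difference is that you spell out the bulk/plateau/transition verification of the angular identity, which the paper leaves as "easy to show."
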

\begin{proof}
By the definition of $\fgamma_\varepsilon$ and
$\fgamma_\lambda^\pm$, for a fixed $j\ge0$, it is easy to show that
\[
\sum_{\ell=-\ell_{\lambda^j}+1}^{\ell_{\lambda^j}-1}
|\fgamma_\varepsilon(\lambda^j\xi_2/\xi_1+\ell)|^2+|\fgamma_{\lambda^j}^\pm(\lambda^j\xi_2/\xi_1\mp\ell_{\lambda^j})|^2
=\chi_{\{|\xi_2/\xi_1|\le1\}}(\xi), \quad \xi\neq0.
\]
Hence, we have
\[
\begin{aligned}
|\fphi^h(\dn_\lambda^j\xi)|^2+\sum_{\fh\in\fPsi_j}|\fh(\db_\lambda^j\xi)|^2
& =
\left(|\falpha_{\lambda,t,\rho}(\lambda^{-2j}\xi_1)|^2+\fbeta_{\lambda,t,\rho}(\lambda^{-2j}\xi_1)|^2\right)\chi_{\{|\xi_2/\xi_1|\le1\}}(\xi)
\\&= |\falpha_{\lambda,t,\rho}(\lambda^{-2j-2}\xi_1)|^2\chi_{\{|\xi_2/\xi_1|\le1\}}(\xi)
\\&=|\fphi^h(\dn_\lambda^{j+1}\xi)|^2,\quad\xi\in\R^2.
\end{aligned}
\]
Similarly, we have
\[
\begin{aligned}
|\fphi^v(\dn_\lambda^j\xi)|^2+\sum_{\fh\in\fPsi_j}|\fh(\db_\lambda^j\exchg\xi)|^2
& =
\left(|\falpha_{\lambda,t,\rho}(\lambda^{-2j}\xi_2)|^2+\fbeta_{\lambda,t,\rho}(\lambda^{-2j}\xi_2)|^2\right)\chi_{\{|\xi_1/\xi_2|\le1\}}(\xi)
\\&= |\falpha_{\lambda,t,\rho}(\lambda^{-2j-2}\xi_1)|^2\chi_{\{|\xi_2/\xi_1|\le1\}}(\xi)
\\&=|\fphi^v(\dn_\lambda^{j+1}\xi)|^2,\quad\xi\in\R^2.
\end{aligned}
\]
Consequently, we have
\[
\begin{aligned}
|\fphi(\dn_\lambda^j\xi)|^2+\sum_{\fh\in\fPsi_j}(|\fh(\db_\lambda^j\xi)|^2+|\fh(\db_\lambda^j\exchg\xi)|^2)
=|\fphi(\dn_\lambda^{j+1}\xi)|^2,\quad a.e.\;\xi\in\R^2.
\end{aligned}
\]
Hence \eqref{eq:SimpleCond2} holds.

Moreover, we have $\fh(\xi)\fh(\xi+2\pi \vk)=0$ for all
$\fh\in\{\fphi\}\cup\{\fPsi_j\}_{j=0}^\infty$ and
$\vk\in\Z^2\backslash\{0\}$. In fact, if $\vk=(k_1,k_2)\in\Z^2$ with
$k_1\neq 0$, then $\fh( \xi)\fh(\xi+2\pi\vk)=0$ due to that
$\falpha_{\lambda,t,\rho},\fbeta_{\lambda,t,\rho}$ are supported on
$[-\rho\pi,\rho\pi]$ with $\rho\le1$. If $k_1= 0$ but $k_2\neq 0$,
then by that $\fgamma_\varepsilon((\xi_2+2\pi
k_2)/\xi_1)\fgamma_\varepsilon(\xi_2/\xi_1)=\fgamma_\varepsilon(\xi_2/\xi_1+2\pi
k_2/\xi_1)\fgamma_\varepsilon(\xi_2/\xi_1)=0$ for
$\xi_1\in[-\rho\pi,\rho\pi]$, we have $\fh(\xi)\fh(\xi+2\pi \vk)=0$
as well. Hence, \eqref{eq:SimpleCond1} is satisfied. Obviously,
\eqref{eq:SimpleCond3} is true by our construction of $\fphi$.

Therefore, by Corollary~\ref{cor:positive},  $\FASS_J(\fphi;
\{\fPsi_j\}_{j=J}^\infty)$ defined as in \eqref{def:FAS} with
$\fphi, \fPsi_j$ being given by \eqref{def:fphi:cone-cut},
\eqref{def:fPsij:cone-cut}, respectively, is a frequency-based
affine shear tight frame for $L_2(\R^2)$ for all $J\ge0$.
\end{proof}

Now, it is easy to show that the cone-adapted shearlet system
constructed in \cite{GuoLabate:2007} is indeed the initial system of
a sequence of frequency-based  affine shear tight frames.  In fact,
let $\lambda = 2$, and $\da_h:=\da_\lambda$,
$\da_v:=\exchg\da_h\exchg$. Let $\varphi, \psi,
\psi^{j,\pm\ell_{\lambda^j}}$ be the inverse Fourier transforms of
$\fphi, \fpsi^h, \fpsi^{j,\pm\ell_{\lambda^j}}$; that is,
$\fphi=\ft\varphi$, $\fpsi=\ft\psi^h$, and
$\fpsi^{j,\pm\ell_{\lambda^j}}=\ft\psi^{j,\pm\ell_{\lambda^j}}$. Let
$\psi^v:=\psi^h(\exchg\cdot)$. It is easy to show that
\[
\ft(2^{3j/2}\psi^h(\sh^\ell\da_h^j\cdot-\vk)) = 2^{-3j/2}\fpsi(\sh_{-\ell}\db_\lambda^j\cdot)e^{-i\sh^{-\ell}\vk\cdot \db_\lambda^j\cdot}=\fpsi_{\sh_{-\ell}\db_\lambda^j;0,\vk}.
\]
Similarly, we have
$\ft(2^{3j/2}\psi^v(\sh_\ell\da_v^j\cdot-\vk))=\fpsi_{\sh_{-\ell}\db_\lambda^j\exchg;
0,\exchg\vk}$. Noting that $\exchg \Z^2=\Z^2$ and the symmetry of
the range of $\ell$ for each scale $j$, we see that the cone-adapted
shearlet system in \eqref{def:cone-adapted-shearlet} with modified
seamline elements is the  frequency-based  affine shear tight frame
$\FASS(\fphi; \{\fPsi_j\}_{j=0}^\infty)$ defined as in
\eqref{def:FAS} with  $\fphi, \fPsi_j$ being given by
\eqref{def:fphi:cone-cut}, \eqref{def:fPsij:cone-cut}, and $\lambda
= 2$. Moreover, it is the initial system of the sequence of
frequency-based affine shear tight frames $\FASS_J(\fphi;
\{\fPsi_j\}_{j=J}^\infty), J\in \N_0$ defined as in
\eqref{def:FAS_J} with   $\fphi, \fPsi_j$ being given by
\eqref{def:fphi:cone-cut}, \eqref{def:fPsij:cone-cut}, respectively.

For the smooth shearlet-like systems constructed in
\cite{GuoLabate:2012}, it is also a special case of the follow
system. Note that $\fgamma_\lambda^+,\fgamma_\lambda^-$ satisfy
\begin{equation}\label{eq:gamma:derivative:corner}
\Big[\frac{d^n}{dx^n}\fgamma_\lambda^\pm(\lambda x\mp
\ell_\lambda)\Big]\Big|_{x=\pm1} = \delta(n)\quad \forall n\in\N_0,
\end{equation}
which  guarantees the smooth gluing of two corner pieces.

Let $\fPsi_j:=\{\fpsi^{j,\ell}(\sh_\ell\cdot):
\ell=-\ell_{\lambda^j},\ldots,\ell_{\lambda^j}\}$ with elements
strictly inside the cone, i.e., $\ell=-\ell_{\lambda^j}+1,\ldots,
\ell_{\lambda^j}-1$  being given by
\begin{equation}\label{def:fphijlGlueInside}
\fpsi^{j,\ell}(\xi) :=
\fomega_{\lambda,t,\rho}(\dd_\lambda^{-j}\sh_{-\ell}\xi)\fgamma_{\varepsilon}(\xi_2/\xi_1)
=\fomega_{\lambda,t,\rho}(\xi_1,
\lambda^{-j}(-\xi_1\ell+\xi_2))\fgamma_{\varepsilon}(\xi_2/\xi_1),\quad
\xi\in\R^2,
\end{equation}
which gives
\[
\begin{aligned}
\fpsi^{j,\ell}(\sh_\ell\db_\lambda^j\xi)=\fomega_{\lambda,t,\rho}(\lambda^{-2j}\xi)\fgamma_{\varepsilon}(\lambda^j\xi_2/\xi_1+\ell);
\end{aligned}
\]
and  those elements on the seamlines, i.e., for  $\ell = \pm \ell_{\lambda^j}$ and $j\ge1$, being given by gluing two pieces along the seamline on two cones together,
\[
\fpsi^{j,\pm\ell_{\lambda^j}}(\sh_{\pm \ell_{\lambda^j}}\db_\lambda^j/2\xi):=
\begin{cases}
\fomega_{\lambda,t,\rho} (\lambda^{-2j}\xi)\fgamma_{\lambda^j}^\mp(\lambda^j\xi_2/\xi_1\pm \ell_{\lambda^j}) & |\xi_2/\xi_1|\le1\\
\fomega_{\lambda,t,\rho}
(\lambda^{-2j}\xi)\fgamma_{\lambda^j}^\mp(\lambda^j\xi_1/\xi_2\pm
\ell_{\lambda^j}) & |\xi_2/\xi_1|\ge 1.
\end{cases}
\]
For $j=0$,
\[
\fpsi^{0,\pm1}(\sh_{\pm 1}\xi):=
\begin{cases}
\fomega_{\lambda,t,\rho} (\xi)\fgamma_{\varepsilon}(\xi_2/\xi_1\pm1) & |\xi_2/\xi_1|\le1\\
\fomega_{\lambda,t,\rho} (\xi)\fgamma_{\varepsilon}(\xi_1/\xi_2\pm
1) & |\xi_2/\xi_1|\ge 1.
\end{cases}
\]
Let $\db_\lambda^{j,\ell}:=\db_\lambda^j$ for $j\ge1$ and $\ell<\ell_{\lambda^j}$,  $\db_\lambda^{j,\pm\ell_{\lambda^j}}:=\db_\lambda^j/2$ for $j\ge1$,
and for $j=0$, $\db_\lambda^{j,\ell}:=I_2$.
Then, we can define the following system
\begin{equation}
\begin{aligned}\label{def:DFAS-cone-shearlet-smooth-station2}
\FASS(\fphi;\{\fPsi_j\}_{j=0}^\infty) = & \{\fphi_{0,\vk}:\vk\in\Z^2\}\cup\\
&\cup\{\fh_{\db_\lambda^{j,\ell};0,\vk},\fh_{\db_\lambda^{j,\ell}\exchg;0,\vk}: \vk\in\Z^2, \fh\in\fPsi_j\backslash\{\fpsi^{j,\ell}(\sh_\ell\cdot): \ell=\pm\ell_{\lambda^j}\}\}_{j=0}^\infty\\
&\cup\{\fh_{\db_\lambda^{j,\ell};0,\vk},\fh_{\db_\lambda^{j,\ell};0,\vk}(\exchg\cdot):
\vk\in\Z^2, \fh\in \{\fpsi^{j,\ell}(\sh_\ell\cdot):
\ell=\pm\ell_{\lambda^j}\}\}_{j=0}^\infty.
\end{aligned}
\end{equation}
\begin{corollary}\label{cor:cone-shear-station2}
$\FASS(\fphi;\{\fPsi_j\}_{j=0}^\infty)$ defined as in
\eqref{def:DFAS-cone-shearlet-smooth-station2} is a frequency-based
affine shear tight frame for $L_2(\R^2)$ and all elements in
$\FASS(\fphi;\{\fPsi_j\}_{j=0}^\infty)$ are compactly supported
functions in $C^\infty(\R^2)$.
\end{corollary}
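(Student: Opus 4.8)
The plan is to establish smoothness directly from the construction and then obtain tightness from the nonnegative-generator characterization, after first upgrading to the nonhomogeneous setting to accommodate the modified seamline dilation $\db_\lambda^j/2$. For smoothness, I would first note that the interior generators $\fpsi^{j,\ell}$ with $|\ell|<\ell_{\lambda^j}$, given by \eqref{def:fphijlGlueInside}, are compactly supported $C^\infty$ functions exactly as in Theorem~\ref{thm:cone-shear-station}, since $\fomega_{\lambda,t,\rho}$ is $C^\infty$ with support bounded away from the origin and $\fgamma_\varepsilon\in C^\infty$. For the seamline generators $\fpsi^{j,\pm\ell_{\lambda^j}}$ the only delicate point is the junction $|\xi_2/\xi_1|=1$ where the two branches of the piecewise definition meet: there the gluing identity \eqref{eq:gamma:derivative:corner} forces all derivatives of the two branches to agree, so the glued function is $C^\infty$ and inherits compact support from $\fomega_{\lambda,t,\rho}$.

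Because the seamline elements carry the dilation $\db_\lambda^j/2$, the system is not of the uniform form covered by Theorem~\ref{thm:main}, so I would instead invoke the general nonhomogeneous affine frame characterization \cite[Theorem~11]{Han:2012:FWS} (equivalently, rerun the periodization computation of Theorem~\ref{thm:main} keeping each generator's own dilation matrix). Since all generators are nonnegative, this reduces, exactly as in the passage from Theorem~\ref{thm:main} to Corollary~\ref{cor:main}, to two conditions: the partition-of-unity identity \eqref{cond:PartionOfUnity} (the $\vk=0$ diagonal term) and the non-overlap condition \eqref{cond:non-overlap}, namely $\fh(\xi)\fh(\xi+2\pi\vk)=0$ for each generator $\fh$ and each $\vk\in\Z^2\setminus\{0\}$. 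The key structural point is that each $\cI^{\vk}_{\fPsi_j}$ is an additive, cross-term-free sum of single-generator pieces, so the mixed dilations induce no coupling: per-generator non-overlap annihilates every off-diagonal term irrespective of the dilation attached to that generator.

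For the partition of unity I would compute the diagonal term scale by scale. Both the interior and the seamline pieces share the common radial factor $|\fomega_{\lambda,t,\rho}(\dn_\lambda^j\xi)|^2$, and, crucially, the factor $1/2$ in $\db_\lambda^j/2$ contributes \emph{no} Jacobian to the diagonal term (as the Plancherel computation shows), so it does not disturb this balance. Summing the angular factors over one cone, the cone-partition identity established in the proof of Corollary~\ref{cor:DFAS-cone-shearlets} collapses the interior contributions together with the glued seamline corners to the cone indicator $\chi_{\{|\xi_2/\xi_1|\le1\}}$; adding the flipped (vertical-cone) terms and using $\fomega_{\lambda,t,\rho}(\exchg\cdot)=\fomega_{\lambda,t,\rho}$ yields exactly $|\fomega_{\lambda,t,\rho}(\dn_\lambda^j\xi)|^2$ at scale $j$. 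By \eqref{def:fomega} this equals $|\fphi(\dn_\lambda^{j+1}\xi)|^2-|\fphi(\dn_\lambda^j\xi)|^2$, so the series telescopes and, since $\lim_{J\to\infty}|\fphi(\dn_\lambda^J\xi)|^2=1$, the diagonal term equals $1$, giving \eqref{cond:PartionOfUnity}.

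The non-overlap condition is where I expect the main obstacle. The scaling generator $\fphi$ and the interior generators are supported in $[-\rho\pi,\rho\pi]^2\subseteq[-\pi,\pi]^2$ (using $\rho\le1$), so \eqref{cond:non-overlap} holds for them just as in Theorem~\ref{thm:cone-shear-station}. The seamline generators, however, are thin slivers along a seamline that are \emph{not} contained in $[-\pi,\pi]^2$, so I would verify non-overlap by hand. Transferring the glued profile into the generator's own coordinates, the factor $1/2$ in $\db_\lambda^j/2$ shrinks the $\xi_1$-extent of the sliver to width $\le\rho\pi\le\pi$ and, at each fixed $\xi_1$, confines the angular support to a $\xi_2$-interval of length $\le\tfrac12\rho\pi(1+2\varepsilon)\le\pi<2\pi$; hence the $2\pi\Z^2$-translates of the sliver are pairwise disjoint and $\fh(\xi)\fh(\xi+2\pi\vk)=0$. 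Carrying out this sliver bookkeeping carefully, namely checking that the gluing across the two cones together with the accompanying flip tiles each seamline neighborhood exactly once and that the $1/2$-dilation keeps all translates separated, is the step I expect to require the most care; smoothness and the partition of unity are comparatively routine given the earlier propositions. Once both conditions are verified, the characterization yields that $\FASS(\fphi;\{\fPsi_j\}_{j=0}^\infty)$ of \eqref{def:DFAS-cone-shearlet-smooth-station2} is a frequency-based affine shear tight frame, and the smoothness and compact support recorded above complete the proof.
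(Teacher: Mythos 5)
Your proposal is correct and follows essentially the same route as the paper's (very terse) proof: nonnegativity of all generators reduces tightness to the partition-of-unity identity (which telescopes via $\fomega_{\lambda,t,\rho}$ and the cone partition) plus per-generator non-overlap, the mixed dilation $\db_\lambda^j/2$ on seamline elements is absorbed by rerunning the Theorem~\ref{thm:main} periodization with each generator's own dilation, and smoothness comes from \eqref{eq:gamma:derivative:corner}. The only quibble is your claim that the seamline slivers are \emph{not} contained in $[-\pi,\pi]^2$ — the paper asserts (and your own subsequent support computation essentially confirms) that the $1/2$ factor keeps them inside a box of side at most $2\pi$, which is all the non-overlap condition needs.
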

\begin{proof} By our construction, we have
\[
\begin{aligned}
|\fphi|^2&+\sum_{j=0}^\infty\sum_{\ell=-\ell_{\lambda^j}+1}^{\ell_{\lambda^j}-1}
[|\fpsi^{j,\ell}(\sh_\ell\db_{\lambda}^j\cdot)|^2+
|\fpsi^{j,\ell}(\sh_\ell\db_{\lambda}^j\exchg\cdot)|^2]
\\&+\sum_{j=0}^\infty\sum_{\ell=\pm\ell_{\lambda^j}}|\fpsi^{j,\ell}(\sh_{\ell}\db_\lambda^j/2\cdot)|^2+|\fpsi^{j,\ell}(\sh_{\ell}\db_\lambda^j\exchg/2\cdot)|^2
=1,\quad a.e.\;\xi\in\R^2.
\end{aligned}
\]
Moreover, all generators $\fpsi^{j,\ell}$ are nonnegative and
defined in $[-\pi,\pi]^2$. Note that dilation matrices of the
seamline generators $\fpsi^{j,\pm\ell_{\lambda^j}}$ are
$\db_\lambda^j/2$. A simple adaptation of the proof of
Theorem~\ref{thm:main} gives that
$\FASS(\fphi;\{\fPsi_j\}_{j=0}^\infty)$ is a frequency-based tight
frame. By the definition of $\fgamma, \fgamma^\pm_\lambda$ in
\eqref{def:gamma}, \eqref{def:gamma:corner}, $\fpsi^{j,\ell}$ are
smooth. Consequently, all elements in
$\FASS(\fphi;\{\fPsi_j\}_{j=0}^\infty)$ are smooth.
\end{proof}

When $\lambda =2, t = 1-\lambda^{-2}$ and $\rho=1$,
$\FASS(\fphi;\{\fPsi_j\}_{j=0}^\infty)$ defined as in
\eqref{def:DFAS-cone-shearlet-smooth-station2} is essentially the
system defined as in \cite{GuoLabate:2012}. For their construction,
the shear operations can reach only up to slope (in absolute value)
$\lambda^{-4}=1/16$. Here, our construction is more general and in
our construction, the shear can reach up to slope
$\lambda^{-2}(1-t)\rho$ with any $0< t,\rho\le 1$. Comparing to our
quasi-stationary construction, the gluing procedure is somewhat
unnatural since one can  see that a different dilation matrix
$\db_\lambda^j/2$ needs to be applied to the gluing elements  at
this scale $j$ while  all other generators use the dilation matrix
$\db_\lambda^j$.
On the other hand, all atoms of our affine shear systems, either
under quasi-stationary construction or non-stationary construction,
obey the parabolic rule and more importantly, at all scales $j$, for
each cone, the dilation matrix is fixed as $\db_\lambda^j$ for all
generators.

\section{MRA Structures and  Filter Banks}
\label{sec:filterBank} In this section  we shall study the MRA
structure of  sequences of frequency-based affine shear tight frames
constructed in Section~\ref{sec:construction} and investigate their
underlying filter banks.

The sequence of systems $\FWS_J(\fphi^J;\{\fPsi_j\}_{j=J}^\infty)$,
$J\ge J_0$ defined as in \eqref{def:FAS_J} has two different
dilation matrices $\da_{\lambda}$ and $\exchg\da_\lambda\exchg$ for
two cones. On the one hand, the functions $\fphi^J, J\ge J_0$ with
the dilation matrix $\dm_{\lambda}$   induce an MRA  $\{\V_j\}_
{j\ge J_0}$ with
\[
\V_j := \overline{{\rm  span}}\{\varphi^j(\dm_{\lambda}^j\cdot-\vk):
\vk\in\Z^2\},
\]
where $\varphi^j := \ft^{-1}\fphi^j$. The function space $\V_j$ is
shift-invariant on the lattice $\dn_\lambda^j\Z^2$. On the other
hand, at scale $j$, let $\psi:=\ft^{-1}\fpsi$ and
$\psi^{j,\ell}:=\ft^{-1}\fpsi^{j,\ell}, |\ell|=r_j+1,\ldots,s_j$.
For convention, $\psi^{j,\ell}:=\psi$ for $|\ell|\le r_j$. Then the
wavelet subspaces $\W_j$ are given by
\[
\W_j :=\overline{{\rm
span}}\{\psi^{j,\ell}(\sh^\ell\da_\lambda^j\cdot-\vk),
\psi^{j,\ell}(\sh^\ell\da_\lambda^j\exchg\cdot-\vk):\ell=-s_j,\ldots,s_j,
\vk\in\Z^2\},\;\; j\ge J_0,
\]
and we have $\V_j\subseteq\V_{j+1}$, $\W_j\subseteq\V_{j+1}$, and
$\V_j+\W_j=\V_{j+1}$ provided
$\FWS_J(\fphi^J;\{\fPsi_j\}_{j=J}^\infty)$ is a tight frame for
$L_2(\R^2)$ for all $J\ge J_0$. While $\V_j$ is shift-invariant on
the lattice $\dn_\lambda^j\Z^2$,  $\W_j$ is not shift-invariant on
the lattice $\dn_{\lambda}^j\Z^2$. We shall next discuss other types
of sequences of  frequency-based  affine  systems that can be
regarded as a `shift-invariant' extension of
$\FWS_J(\fphi^J;\{\fPsi_j\}_{j=J}^\infty)$.

\subsection{Non-stationary MRA and filter banks}
Let us first discuss the MRA and filter bank structure for the
non-stationary construction in Section~\ref{sec:construction}.

Let $\feta, \fzeta$ be defined as in \eqref{def:fetaZeta}. Fixed
$J_0\ge 0$, let $\fTheta^{J_0}$, $\fphi^j$, $\fPsi_j$ be defined as
in \eqref{def:fTheta}, \eqref{def:fphij},
\eqref{def:fPsi-j-non-station} respectively. Let
$\FASS_J(\fphi^J;\{\fPsi_j\}_{j=J}^\infty)$ be defined with these
$\fphi^j, \fPsi_j$. By modifying elements from original
$\fpsi^{j,\ell}$ in \eqref{def:fpsijl}, we define
$\widetilde\fPsi_j$ as follows:
\begin{equation}\label{def:fpsijl-framelets}
\begin{aligned}
\widetilde\fPsi_j:=\{\widetilde\fpsi^{j,\ell}:
\ell=-\ell_{\lambda^j},\ldots,\ell_{\lambda^j}\}
\end{aligned}
\end{equation}
with
\begin{equation}
\label{def:fpsijl-framelet-inner}
\widetilde\fpsi^{j,\ell}(\xi):=\fomega_{\lambda,t,\rho}^j(\xi)\frac{\fgamma_{\varepsilon}(\lambda^j\xi_2/\xi_1+\ell)}{\fGamma^j(\xi)},
\quad\xi\neq0,\;\; |\ell|\le \ell_{\lambda^j}-1,
\end{equation}
\begin{equation}
\label{def:fpsijl-framelet-inner}
\widetilde\fpsi^{j,\pm\ell_{\lambda^j}}(\xi):=\fomega_{\lambda,t,\rho}^j(\xi)
\frac{\fgamma^\mp_{\lambda^j,\varepsilon,\varepsilon_0}(\lambda^j\xi_2/\xi_1\pm\ell_{\lambda^j})}
{\fGamma^j(\xi)},\quad\xi\neq0,
\end{equation}
and $\widetilde\fpsi^{j,\ell}(0):=0$. Again, it is trivial that
$\widetilde\fpsi^{j,\ell},
\ell=-\ell_{\lambda^j},\ldots,\ell_{\lambda^j}$ are compactly
supported functions in $C^\infty(\R^2)$.
 We then use the dilation matrix
$\dm_\lambda$ for all generators $\fphi^j$ and
$\widetilde\fpsi^{j,\ell}$. The \emph{frequency-based  affine
system} is then defined to be
\begin{equation}
\begin{aligned}\label{def:cone-framelets-non-station}
\FWS_J(\fphi^J;\{\widetilde\fPsi_j\}_{j=J}^\infty) := &
\{\fphi^J_{\dn_{\lambda}^J;0,\vk}:\vk\in\Z^2\}
\cup\{\fh_{\dn_{\lambda}^j;0,\vk},\fh_{\dn_{\lambda}^j\exchg;0,\vk}: \vk\in\Z^2, \fh\in\widetilde\fPsi_j\}_{j=J}^\infty\\
= & \{\fphi^J_{\dn_{\lambda}^J;0,\vk}:\vk\in\Z^2\}
\cup\{\widetilde\fpsi^{j,\ell}_{\dn_{\lambda}^j;0,\vk},\widetilde\fpsi^{j,\ell}_{\dn_{\lambda}^j\exchg;0,\vk}:
 \vk\in\Z^2, \ell=-\ell_{\lambda^j},\ldots,\ell_{\lambda^j}\}_{j=J}^\infty.
\end{aligned}
\end{equation}
We now discuss connection of \eqref{def:cone-framelets-non-station}
with the directional tight framelets as discussed in
\cite{Han:2012:FWS}.
\begin{theorem}\label{thm:cone-framelets-non-station}
Retaining all the conditions for $\lambda, t, \rho, \varepsilon,
\varepsilon_0, J_0$ as in Theorem~\ref{thm:non-station}. Let
$\FWS_J(\fphi^J;\{\widetilde\fPsi_j\}_{j=J}^\infty)$ be defined as
in \eqref{def:cone-framelets-non-station} with $\fphi^j$ and
$\widetilde\fPsi_j$ being given by \eqref{def:fphij} and
\eqref{def:fpsijl-framelets}, respectively. Then
$\FWS_J(\fphi^J;\{\widetilde\fPsi_j\}_{j=J}^\infty)$ is a
frequency-based affine  tight frame for $L_2(\R^2)$ for all $J\ge
J_0$; that is,
$\{\fphi^J\}\cup\{\widetilde\fPsi_j\}_{j=J}^\infty\subseteq
L_2(\R^2)$ and
\begin{equation}\label{def:tight-framelets}
\begin{aligned}
(2\pi)^2\|\ff\|_2^2=\sum_{\vk \in\Z^2}
|\ip{\ff}{\fphi^J_{\dn_{\lambda}^J;0,\vk}}|^2
&+\sum_{j=J}^\infty\sum_{\fh\in\widetilde\fPsi_j}\sum_{\vk \in\Z^2}
(|\ip{\ff}{\fh_{\dn_\lambda^{j};0,\vk}}|^2+|\ip{\ff}{\fh_{\dn_\lambda^{j}\exchg;0,\vk}}|^2)
\end{aligned}\; \forall \ff\in L_2(\R^2).
\end{equation}
All elements of $\FWS_J(\fphi^J;\{\widetilde\fPsi_j\}_{j=J}^\infty)$
are compactly supported functions in $C^\infty(\R^2)$. Moreover, let
$\FWS_J(\fphi^J;\{\fPsi_j\}_{j=J}^\infty)$ be the frequency-based
affine shear tight frame with $\fPsi_j=\{\fpsi^{j,\ell}:
\ell=-\ell_{\lambda^j},\ldots,\ell_{\lambda^j}\}$ being given in as
\eqref{def:fpsijl}. Then $\FWS_J(\fphi^J;\{\fPsi_j\}_{j=J}^\infty)$
and $\FWS_J(\fphi^J;\{\widetilde\fPsi_j\}_{j=J}^\infty)$ are
connected to each other by the following relations:
\begin{equation}\label{eq:Connection}
\fpsi^{j,\ell}_{\sh_\ell\db_\lambda^j;0,\vk}=\lambda^{j/2}\widetilde\fpsi^{j,\ell}_{\dn_{\lambda}^j;0,\dd_\lambda^{j}\sh^\ell\vk},\quad
\fpsi^{j,\ell}_{\sh_\ell\db_\lambda^j\exchg;0,\vk}=\lambda^{j/2}\widetilde\fpsi^{j,\ell}_{\dn_{\lambda}^j\exchg;0,\dd_\lambda^{j}\sh^\ell\vk}.
\end{equation}
\end{theorem}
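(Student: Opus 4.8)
The plan is to decouple the three assertions of the theorem --- the tight-frame identity \eqref{def:tight-framelets}, smoothness with compact support, and the connection relation \eqref{eq:Connection} --- and to drive all of them from a single pointwise identity between the two families of generators, namely
\[
\widetilde\fpsi^{j,\ell}(\dn_\lambda^j\xi)=\fpsi^{j,\ell}(\sh_\ell\db_\lambda^j\xi),\qquad \widetilde\fpsi^{j,\ell}(\dn_\lambda^j\exchg\xi)=\fpsi^{j,\ell}(\sh_\ell\db_\lambda^j\exchg\xi),\qquad \xi\in\R^2.
\]
First I would prove this identity by unwinding the definitions \eqref{def:fpsijl}, \eqref{def:fpsijl-corner} of $\fpsi^{j,\ell}$ and \eqref{def:fpsijl-framelet-inner} of $\widetilde\fpsi^{j,\ell}$. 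The computation rests on three elementary facts: $\dd_\lambda^{-j}\db_\lambda^j=\dn_\lambda^j$ (since $\dd_\lambda=\diag(1,\lambda)$ and $\db_\lambda=\diag(\lambda^{-2},\lambda^{-1})$ give $\diag(\lambda^{-2j},\lambda^{-2j})$), so that the argument of $\fomega_{\lambda,t,\rho}^j$ becomes $\dn_\lambda^j\xi$; the cancellation $(\sh_\ell\db_\lambda^j)^{-1}\sh_\ell\db_\lambda^j=\Id$, so that the denominator becomes $\fGamma^j(\xi)$; and the direct evaluation $(\sh_\ell\db_\lambda^j\xi)_2/(\sh_\ell\db_\lambda^j\xi)_1=\lambda^j\xi_2/\xi_1+\ell$, which matches the numerator $\fgamma_\varepsilon$. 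Finally the homogeneity $\fGamma^j(\dn_\lambda^j\xi)=\fGamma^j(\xi)$ from Proposition~\ref{prop:fGamma-non-station}(i) lets me freely pass $\dn_\lambda^j$ through $\fGamma^j$. The corner cases $|\ell|=\ell_{\lambda^j}$ are identical with $\fgamma_\varepsilon$ replaced by $\fgamma^\mp_{\lambda^j,\varepsilon,\varepsilon_0}$.

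For the tight-frame property I would \emph{not} try to read it off \eqref{eq:Connection}: the translation index map $\vk\mapsto\dd_\lambda^j\sh^\ell\vk$ sends $\Z^2$ onto $\dd_\lambda^j\Z^2=\Z\times\lambda^j\Z$, which is not $\Z^2$ in general, and there is the extra factor $\lambda^{j/2}$, so the two frame sums genuinely differ. Instead I would invoke the nonnegative-generator characterization. The system \eqref{def:cone-framelets-non-station} is a nonhomogeneous affine system whose high-frequency dilation is the isotropic $\dn_\lambda=\lambda^{-2}\Id$; the proofs of Theorem~\ref{thm:main2} and Corollary~\ref{cor:positive} only use that the dilation matrices are expansive and generate a discrete lattice, which remains true for $\dm_\lambda=\lambda^2\Id$, so the same characterization applies (this is exactly the directional-framelet characterization of \cite{Han:2012:FWS}). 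It then suffices to verify the three conditions \eqref{eq:SimpleCond1}--\eqref{eq:SimpleCond3} with $\fpsi^{j,\ell}(\sh_\ell\db_\lambda^j\cdot)$ replaced by $\widetilde\fpsi^{j,\ell}(\dn_\lambda^j\cdot)$. Condition \eqref{eq:SimpleCond3} is unchanged because the same $\fphi^j$ is used. The partition-of-unity \eqref{eq:SimpleCond2} transfers verbatim from the shear system: by the pointwise identity above, $\sum_\ell\big(|\widetilde\fpsi^{j,\ell}(\dn_\lambda^j\xi)|^2+|\widetilde\fpsi^{j,\ell}(\dn_\lambda^j\exchg\xi)|^2\big)=\sum_\ell\big(|\fpsi^{j,\ell}(\sh_\ell\db_\lambda^j\xi)|^2+|\fpsi^{j,\ell}(\sh_\ell\db_\lambda^j\exchg\xi)|^2\big)$, which equals $|\fphi^{j+1}(\dn_\lambda^{j+1}\xi)|^2-|\fphi^j(\dn_\lambda^j\xi)|^2$ by \eqref{def:fphij} and Theorem~\ref{thm:non-station}.

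The only genuinely new verification is the non-overlap condition \eqref{eq:SimpleCond1} for $\widetilde\fpsi^{j,\ell}$, and this is where the one support computation is needed. Using that $\fomega^j_{\lambda,t,\rho}$ is built from $\fbeta_{\lambda,t,\rho}$, whose support lies in $[-\rho\pi,-\lambda^{-2}(1-t)\rho\pi]\cup[\lambda^{-2}(1-t)\rho\pi,\rho\pi]$, I would show $\supp\widetilde\fpsi^{j,\ell}\subseteq[-\rho\pi,\rho\pi]^2\subseteq[-\pi,\pi]^2$ (here $\rho\le1$); then $\widetilde\fpsi^{j,\ell}(\xi)\widetilde\fpsi^{j,\ell}(\xi+2\pi\vk)=0$ a.e.\ for $\vk\in\Z^2\setminus\{0\}$ because the $2\pi\vk$-translates of $[-\pi,\pi]^2$ overlap only on a null set, and the same bound for $\fphi^j$ already appears in Theorem~\ref{thm:non-station}. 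Smoothness and compact support of every element were recorded when $\widetilde\fpsi^{j,\ell}$ was defined (they are products of compactly supported $C^\infty$ functions divided by the strictly positive smooth $\fGamma^j$), so the second assertion is immediate.

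The connection relation \eqref{eq:Connection} I would then obtain by a direct modulation--dilation bookkeeping, which is the remaining technical point. Writing out both sides, $\fpsi^{j,\ell}_{\sh_\ell\db_\lambda^j;0,\vk}(\xi)=\lambda^{-3j/2}\widetilde\fpsi^{j,\ell}(\dn_\lambda^j\xi)\,e^{-i\vk\cdot\sh_\ell\db_\lambda^j\xi}$ (using $|\det(\sh_\ell\db_\lambda^j)|^{1/2}=\lambda^{-3j/2}$ and the pointwise identity), while $\lambda^{j/2}\widetilde\fpsi^{j,\ell}_{\dn_\lambda^j;0,\dd_\lambda^j\sh^\ell\vk}(\xi)=\lambda^{j/2}\lambda^{-2j}\widetilde\fpsi^{j,\ell}(\dn_\lambda^j\xi)\,e^{-i(\dd_\lambda^j\sh^\ell\vk)\cdot\dn_\lambda^j\xi}$, whose prefactor is also $\lambda^{-3j/2}$. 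The magnitudes thus agree, and the phases agree because $(\sh_\ell\db_\lambda^j)^{\tp}=\db_\lambda^j\sh^\ell=\dn_\lambda^j(\dd_\lambda^j\sh^\ell)$, so that $\vk\cdot\sh_\ell\db_\lambda^j\xi=(\dd_\lambda^j\sh^\ell\vk)\cdot\dn_\lambda^j\xi$ for all $\xi$; the flipped identity is identical with an inserted $\exchg$. I expect the main conceptual obstacle to be recognizing that the tight-frame claim must come from the characterization rather than from \eqref{eq:Connection} (because of the non-surjective lattice map and the $\lambda^{j/2}$ factor just noted), so that the pointwise identity is used only to import the partition-of-unity, while the non-overlap support bound for $\widetilde\fpsi^{j,\ell}$ is verified afresh.
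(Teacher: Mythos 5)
Your proposal is correct and takes essentially the same route as the paper: the paper likewise verifies the partition-of-unity identity, the non-overlap condition and the limit condition and then invokes the nonnegative-generator characterization for directional framelet systems (\cite[Corollary~18]{Han:2012:FWS}), and it proves \eqref{eq:Connection} by exactly the bookkeeping $\sh_\ell\db_\lambda^j=\sh_\ell\dd_\lambda^j\dn_\lambda^j$ and phase comparison you describe. The only (harmless) imprecision is your claim $\supp\widetilde\fpsi^{j,\ell}\subseteq[-\rho\pi,\rho\pi]^2$: for the corner generators $\ell=\pm\ell_{\lambda^j}$ the support can reach $\rho(1+2\varepsilon_0/\lambda^{2j})\pi$ in the second coordinate, but the non-overlap condition still holds there by the same $\varepsilon_0$-estimate used for $\fpsi^{j,\pm\ell_{\lambda^j}}$ in the proof of Theorem~\ref{thm:non-station}, so nothing is lost.
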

\begin{proof}
It is obvious that
\[
|\fphi^j(\xi)|^2+\sum_{\fh\in\widetilde\fPsi_j}(|\fh(\xi)|^2+|\fh(\exchg\xi)|^2) = |\fphi^{j+1}(\dn\xi)|^2,\; a.e.\; \xi\in\R^2.
\]
By our construction, \eqref{eq:SimpleCond1} and
\eqref{eq:SimpleCond3} hold. Now, the conclusion follows from
\cite[Corollary 18]{Han:2012:FWS} that
$\FWS_J(\fphi^J;\{\widetilde\fPsi_j\}_{j=J}^\infty)$ is a
frequency-based affine tight frame for $L_2(\R^2)$ for all $J\ge
J_0$. Obviously, all elements of
$\FWS_J(\fphi^J;\{\widetilde\fPsi_j\}_{j=J}^\infty)$ are compactly
supported functions in $C^\infty(\R^2)$.

Since
\[
\fpsi^{j,\ell}_{\sh_\ell\db_\lambda^j;0,\vk}(\xi)
=\lambda^{3j/2}\fpsi^{j,\ell}(\sh_\ell\db_\lambda^j\xi)e^{-i\vk\cdot
\sh_\ell\db_\lambda^j\xi}=\lambda^{-3j/2}\fomega_{\lambda,t,\rho}^j(\dn_\lambda^j\xi)\frac{\gamma_\varepsilon(\lambda^j\xi_2/\xi_1+\ell)}{\fGamma^j(\xi)}
e^{-i\vk\cdot \sh_\ell\db_\lambda^j\xi}
\]
and
\[
\widetilde\fpsi^{j,\ell}_{\dn_\lambda^j;0,\vk}(\xi)
=\lambda^{-2j}\widetilde\fpsi^{j,\ell}(\dn_\lambda^j\xi)e^{-i\vk\cdot
\dn_\lambda^j\xi}=\lambda^{2j}\fomega_{\lambda,t,\rho}^j(\dn_\lambda^j\xi)\frac{\gamma_\varepsilon(\lambda^j\xi_2/\xi_1+\ell)}{\fGamma^j(\xi)}
e^{-i\vk\cdot \dn_\lambda^j\xi},
\]
by noting that $\sh_\ell\db_\lambda^j =
\sh_\ell\dd_\lambda^j\dn_\lambda^j$, \eqref{eq:Connection} is
obviously true.
\end{proof}
When $\lambda$ is an integer, we have
$\dd_\lambda^j\sh^\ell\Z^2\subseteq\Z^2$. Equation
\eqref{eq:Connection} shows that when $\lambda$ is an integer, the
frequency-based affine shear tight frame
$\FWS_J(\fphi^J;\{\fPsi_j\}_{j=J}^\infty)$ is indeed a subsystem of
the frequency-based affine tight frame
$\FWS_J(\fphi^J;\{\widetilde\fPsi_j\}_{j=J}^\infty)$ through
subsampling. Since both of these two systems share the same
refinable functions $\fphi^j$, $\V_j$ of the MRA for these two
systems are the same. However,  at scale $j$, the wavelet subspace
$\widetilde\W_j$ of
$\FWS_J(\fphi^J;\{\widetilde\fPsi_j\}_{j=J}^\infty)$  generated by
$\widetilde\psi^{j,\ell}:=\ft^{-1}\widetilde\fpsi^{j,\ell}$ are
\[
\widetilde\W_j :=\overline{{\rm
span}}\{\widetilde\psi^{j,\ell}(\dm_{\lambda}^j\cdot-\vk),
\widetilde\psi^{j,\ell}(\dm_{\lambda}^j\exchg\cdot-\vk):\ell=-\ell_{\lambda^j},\ldots,\ell_{\lambda^j},
\vk\in\Z^2\}, \;\;j\ge J_0.
\]
It is trivial to see that  $\widetilde\W_j$
is shift-invariant on the lattice $\dn_{\lambda}^j\Z^2$ and moreover, we have $\W_j\subseteq\widetilde \W_j$.

Let us next study the filter bank structure of the frequency-based
affine tight frame
$\FWS_J(\fphi^J;\{\widetilde\fPsi_j\}_{j=J}^\infty)$. By our
construction and requiring $0<\rho<1$, we can choose
$\varepsilon_0<\lambda^2(\rho_0/\rho-1)/2$ so that
$\supp\fphi^j(\dm_{\lambda}\cdot)\subseteq\supp\fphi^{j+1}\subseteq[-\rho_0\pi,\rho_0\pi]^2$
and
$\supp\widetilde\fpsi^{j,\ell}(\dm_{\lambda}\cdot)\subseteq\supp\fphi^{j+1}\subseteq[-\rho_0\pi,\rho_0\pi]^2$
for some $0<\rho<\rho_0<1$. Let
 $2\pi\Z^2$-periodic functions $\fa^j, \fb^{j,\ell}, j\ge J_0$ be
defined as follows.
\begin{equation}\label{def:masks}
\begin{aligned}
\fa^j(\xi)&:=
\begin{cases}
\frac{\fphi^j(\dm_{\lambda}\xi)}{\fphi^{j+1}(\xi)}&\xi\in\supp\fphi^{j}(\dm_\lambda\cdot)\\
0 & \xi\in [-\pi,\pi]^2\backslash\supp\fphi^{j}(\dm_\lambda\cdot),
\end{cases}\\
\fb^{j,\ell}(\xi)&:=\fb^j(\xi)\frac{\fgamma_\varepsilon(\lambda^j\xi_2/\xi_1+\ell)}{\fGamma^j(\xi)}, \;|\ell|<\ell_{\lambda^j}-1,\\
\fb^{j,\pm\ell_{\lambda^j}}(\xi)&:=\fb^j(\xi)\frac{\fgamma^{\mp}_{\lambda^j,\varepsilon,\varepsilon_0}(\lambda^j\xi_2/\xi_1\pm\ell_{\lambda^j})}{\fGamma^j(\xi)},
\end{aligned}
\end{equation}
where $\fb^j(\xi) = \sqrt{\fg^j(\xi)-|\fa^j(\xi)|^2}$ for some function $\fg^j$ defined on $\T^2$ satisfying $\fg^j\equiv 1$ on the support of $\fphi^{j+1}$.
By \cite[Corollary 18 and Theorem 17]{Han:2012:FWS}, we have the following result.
\begin{corollary}\label{cor:filterBank}
Retaining all the conditions for $\lambda, t, \rho, \varepsilon,
\varepsilon_0, J_0$ as in Theorem~\ref{thm:non-station} with
$\lambda$ being an integer, $0<\rho<1$, and
$\varepsilon_0<\lambda^2(\rho_0/\rho-1)$  such that $\supp\fphi^j$
and $\supp\fpsi^{j,\ell}$ are both inside $[-\rho_0\pi,\rho_0\pi]^2$
for some $0<\rho<\rho_0<1$. Let
$\FWS_J(\fphi^J;\{\widetilde\fPsi_j\}_{j=J}^\infty)$, $J\ge J_0$ be
defined as in \eqref{def:cone-framelets-non-station} with
$\widetilde\fPsi_j$ being given as in \eqref{def:fpsijl-framelets}
and let $\fa^j, \fb^{j,\ell}$ be defined as in \eqref{def:masks}.
Then there exist $\fg^j\in C^\infty(\T^2)$, $j\ge J_0$ such that
 $\fa^j, \fb^{j,\ell}\in C^\infty(\T^2)$ for all $j\ge J_0, \ell=-\ell_{\lambda^j},\ldots, \ell_{\lambda^j}$, and  we have
\begin{equation}\label{def:refinable}
\fphi^j(\dm_{\lambda}\xi)=\fa^j(\xi)\fphi^{j+1}(\xi)
\quad\mbox{and}\quad
\widetilde\fpsi^{j,\ell}(\dm_{\lambda}\xi)=\fb^{j,\ell}(\xi)\fphi^{j+1}(\xi),
\; j\ge J_0
\end{equation}
 for a.e. $\xi\in\R^2$. Moreover, $\{\fa^j;\fb^{j,\ell}, \fb^{j,\ell}(\exchg\cdot):
\ell=-\ell_{\lambda^j},\ldots, \ell_{\lambda^j}\}$ is a filter bank
having the perfect reconstruction property, i.e.,
\begin{equation}\label{def:PR1}
\begin{aligned}
&|\fa^j(\xi)|^2+\sum_{\ell=-\ell_{\lambda^j}}^{\ell_{\lambda^j}}(|\fb^{j,\ell}(\xi)|^2+|\fb^{j,\ell}(\exchg\xi)|^2)=1,  \quad a.e.\;\xi\in\sigma_{\fphi^{j+1}},
\end{aligned}
\end{equation}
and
\begin{equation}\label{def:PR2}
\begin{aligned}
&\overline{\fa^j(\xi)} \fa^j(\xi+2\pi
\omega)+\sum_{\ell=-\ell_{\lambda^j}}^{\ell_{\lambda^{j}}}
\Big[\overline{\fb^{j,\ell}(\xi)}\fb^{j,\ell}(\xi+2\pi
\omega)+\overline{\fb^{j,\ell}(\exchg\xi)}\fb^{j,\ell}(\exchg\xi+2\pi
\omega)\Big]= 0
\end{aligned}
\end{equation}
for $a.e.\;\xi\in\sigma_{\fphi^{j+1}}\cap
(\sigma_{\fphi^{j+1}}-2\pi\omega)$ and for
$\omega\in\Omega_{\dm_{\lambda}}\backslash\{0\}$ with
$\Omega_{\dm_{\lambda}}= [\dm_{\lambda}^{-1}\Z^2]\cap[0,1)^2$ and
$\sigma_{\fphi^j}:=\{\xi\in\R^2: \sum_{\vk\in\Z^2}|\fphi^j(\xi+2\pi
\vk)|^2\neq0\}$.
\end{corollary}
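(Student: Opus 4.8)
The plan is to recognize this corollary as a direct application of the filter bank machinery of \cite[Theorem~17 and Corollary~18]{Han:2012:FWS} to the frequency-based affine tight frame $\FWS_J(\fphi^J;\{\widetilde\fPsi_j\}_{j=J}^\infty)$, whose tightness is already established in Theorem~\ref{thm:cone-framelets-non-station}. The paper-specific work splits into three verifiable tasks: (a) showing that the masks in \eqref{def:masks} are well defined and that the refinement relations \eqref{def:refinable} hold; (b) producing $\fg^j\in C^\infty(\T^2)$ that renders all masks members of $C^\infty(\T^2)$; and (c) invoking the cited results to pass from the tight-frame identity to the perfect-reconstruction conditions \eqref{def:PR1} and \eqref{def:PR2}.

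For the refinement relations I would rewrite the defining identity \eqref{def:fphij} with $\xi$ replaced by $\dm_\lambda^{j+1}\eta$; since $\dn_\lambda^{j}\dm_\lambda^{j+1}=\dm_\lambda$, this yields
\[
|\fphi^{j+1}(\eta)|^2=|\fphi^{j}(\dm_\lambda\eta)|^2+|\fomega_{\lambda,t,\rho}^{j}(\dm_\lambda\eta)|^2,\quad\eta\in\R^2.
\]
In particular $\fphi^{j+1}\ge|\fphi^{j}(\dm_\lambda\cdot)|\ge0$, so $\fphi^{j+1}$ is strictly positive on $\supp\fphi^{j}(\dm_\lambda\cdot)$, the ratio defining $\fa^j$ makes sense, and $\fphi^{j}(\dm_\lambda\xi)=\fa^j(\xi)\fphi^{j+1}(\xi)$. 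Substituting back and using $\fg^j\equiv1$ on $\supp\fphi^{j+1}$ gives $|\fomega_{\lambda,t,\rho}^{j}(\dm_\lambda\xi)|^2=(\fg^j(\xi)-|\fa^j(\xi)|^2)|\fphi^{j+1}(\xi)|^2=|\fb^j(\xi)|^2|\fphi^{j+1}(\xi)|^2$, whence $\fomega_{\lambda,t,\rho}^{j}(\dm_\lambda\xi)=\fb^j(\xi)\fphi^{j+1}(\xi)$ after taking nonnegative square roots. Finally, because $\dm_\lambda=\lambda^2 I_2$ preserves the slope $\xi_2/\xi_1$ and $\fGamma^j$ is homogeneous of degree zero (Proposition~\ref{prop:fGamma-non-station}(i)), the definition \eqref{def:fpsijl-framelet-inner} of $\widetilde\fpsi^{j,\ell}$ factors as $\widetilde\fpsi^{j,\ell}(\dm_\lambda\xi)=\fomega_{\lambda,t,\rho}^{j}(\dm_\lambda\xi)\,\fgamma_\varepsilon(\lambda^j\xi_2/\xi_1+\ell)/\fGamma^j(\xi)=\fb^{j,\ell}(\xi)\fphi^{j+1}(\xi)$, which is the second part of \eqref{def:refinable}; the corner indices $\ell=\pm\ell_{\lambda^j}$ are treated identically with $\fgamma^\mp_{\lambda^j,\varepsilon,\varepsilon_0}$ in place of $\fgamma_\varepsilon$.

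For smoothness I would first use the hypothesis on $\varepsilon_0$ so that $\supp\fphi^{j}(\dm_\lambda\cdot)\subseteq\supp\fphi^{j+1}\subseteq[-\rho_0\pi,\rho_0\pi]^2$ with $\rho_0<1$; this containment inside $(-\pi,\pi)^2$ makes the $2\pi\Z^2$-periodic extension harmless and lets the refinement relations hold almost everywhere on all of $\R^2$. Smoothness of $\fa^j$ and $\fb^j$ then follows from the same Taylor-expansion mechanism used to prove $\fmu_{\lambda,t,\rho},\fupsilon_{\lambda,t,\rho}\in C^\infty(\T)$: on the interior of $\supp\fphi^{j+1}$ the denominator is bounded below and the quotients are smooth, while at the boundary the numerators $\fphi^{j}(\dm_\lambda\cdot)$ and $\fomega_{\lambda,t,\rho}^{j}(\dm_\lambda\cdot)$ vanish to infinite order, inherited from the flatness of $\falpha_{\lambda,t,\rho}$ and $\fbeta_{\lambda,t,\rho}$, so the quotients extend by zero to $C^\infty$ functions. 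One then picks $\fg^j\in C^\infty(\T^2)$ equal to $1$ on $\supp\fphi^{j+1}$ and arranged so that $\fg^j-|\fa^j|^2\ge0$ everywhere, exactly as $\fg_{\lambda,t,\rho}$ was chosen for $\fupsilon_{\lambda,t,\rho}$; multiplying by the smooth, nowhere-vanishing, degree-zero factor $\fgamma_\varepsilon/\fGamma^j$ (or its corner analogue) keeps $\fb^{j,\ell}\in C^\infty(\T^2)$, since $\fomega_{\lambda,t,\rho}^{j}$ is supported away from the origin.

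With the masks smooth and the refinement relations \eqref{def:refinable} in hand, the identities \eqref{def:PR1} and \eqref{def:PR2} are precisely the perfect-reconstruction conditions attached to a frequency-based affine tight frame in \cite[Theorem~17]{Han:2012:FWS}, so they follow from the tightness proved in Theorem~\ref{thm:cone-framelets-non-station}. I expect the main obstacle to be the global smoothness in task (b): the infinite-order flatness estimates at the boundaries of the nested supports, together with the correct design of $\fg^j$ outside $\supp\fphi^{j+1}$, are the only genuinely delicate points, everything else reducing to bookkeeping with the scale-invariance of $\fGamma^j$ and the already-established tight-frame identity.
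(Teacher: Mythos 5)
Your proposal is correct and follows essentially the same route as the paper: derive the refinement relations \eqref{def:refinable} directly from \eqref{def:fphij} and the definitions of the masks, choose $\fg^j\in C^\infty(\T^2)$ equal to $1$ on $\supp\fphi^{j+1}$ (which is strictly inside $[-\pi,\pi]^2$ by the hypothesis on $\varepsilon_0$) so that $\fb^j=(\fg^j-|\fa^j|^2)^{1/2}$ and hence all $\fb^{j,\ell}$ are smooth, and then invoke \cite[Theorem~17]{Han:2012:FWS} together with the tightness from Theorem~\ref{thm:cone-framelets-non-station} to obtain \eqref{def:PR1}--\eqref{def:PR2}. The only cosmetic slip is calling $\fgamma_\varepsilon/\fGamma^j$ ``nowhere-vanishing'' (it is the denominator $\fGamma^j\ge 1$ that matters, while $\fgamma_\varepsilon$ does vanish), but your actual justification via the support of $\fomega^j_{\lambda,t,\rho}$ being away from the origin is the same as the paper's.
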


\begin{proof}
By our choice of $\varepsilon, \rho, \varepsilon_0, \lambda, t,
J_0$, $\FWS_J(\fphi^J;\{\widetilde\fPsi_j\}_{j=J}^\infty)$  defined
as in \eqref{def:cone-framelets-non-station} with
$\widetilde\fPsi_j$ being given in \eqref{def:fpsijl-framelets} is a
frequency-based affine tight frame for $L_2(\R^2)$ for all $J\ge
J_0$. By the construction of $\fa^j, \fb^{j,\ell}$, it is easily
seen that \eqref{def:refinable} holds. Now by \cite[Theorem
17]{Han:2012:FWS}, \eqref{def:PR1} and \eqref{def:PR2} hold.

Since $\supp\fphi^j(\dm_\lambda\cdot)$ is strictly inside
$\supp\fphi^{j+1}$, by the smoothness of $\fphi^{j}$ and
$\fphi^{j+1}$, it is trivial  that $\fa^j\in C^\infty(\T^2)$. We
next show that there exist $\fg^j\in C^\infty(\T^2)$ such that
$\fb^{j,\ell}\in C^\infty(\T^2)$. Since  $\supp\fphi^{j+1}$ and
$\supp\fpsi^{j,\ell}$ is inside $[-\rho_0\pi, \rho_0\pi]^2$. Then
one can construct a function $\fg^j\in C^\infty(\T^2)$ such that
$\fg^j(\xi)\equiv1$ for $\xi\in [-\rho_0\pi,\rho_0\pi]^2$ and
$\fg^j(\xi)\equiv0$ for $\xi\in \T^2\backslash
[-\rho_1\pi,\rho_1\pi]^2$ and for some  $\rho_1$ such that
$0<\rho_0<\rho_1<1$. Since
$\supp\fomega_{\lambda,t,\rho}^j(\dm_\lambda\cdot)
\subseteq\supp\fphi^{j+1}$, we have
\[
\begin{aligned}
\fomega_{\lambda,t,\rho}^j(\dm_\lambda^{j+1}\xi)&=(|\fphi^{j+1}(\xi)|^2-|\fphi^j(\dm_\lambda\xi)|^2)^{1/2}
=(|\fphi^{j+1}(\xi)|^2-|\fa^j(\xi)\fphi^{j+1}(\xi)|^2)^{1/2}
\\&=(1-|\fa^j(\xi)|^2)^{1/2}\fphi^{j+1}(\xi)=(\fg^j(\xi)-|\fa^j(\xi)|^2)^{1/2}\fphi^{j+1}(\xi).
\end{aligned}
\]
Obviously, $(\fg^j(\xi)-|\fa^j(\xi)|^2)^{1/2}\in C^\infty(\T^2)$.
Then,
\[\begin{aligned}
\widetilde\fpsi^{j,\ell}(\dm_{\lambda}\xi)
=\fomega_{\lambda,t,\rho}^j(\dm_{\lambda}\xi)\frac{\fgamma_\varepsilon(\lambda^j\xi_2/\xi_1+\ell)}{\fGamma^j(\xi)}=
\fb^{j,\ell}(\xi)\fphi^{j+1}(\xi)
\end{aligned}
\]
with $\fb^{j,\ell}(\xi)=\fb^j
(\xi)\frac{\fgamma_\varepsilon(\lambda^j\xi_2/\xi_1+\ell)}{\fGamma^j(\xi)}$
being a function in $\C^\infty(\T^2)$. Similarly,
\[\begin{aligned}
\widetilde\fpsi^{j,\pm\ell_{\lambda^j}}(\dm_{\lambda}\xi)
=\fomega_{\lambda,t,\rho}^j(\dm_{\lambda}\xi)\frac{\fgamma^\mp_{\lambda^j,\varepsilon,\varepsilon_0}(\lambda^j\xi_2/\xi_1\pm\ell)}{\fGamma^j(\xi)}=
\fb^{j,\pm\ell_{\lambda^j}}(\xi)\fphi^{j+1}(\xi).
\end{aligned}
\]
We are done.
\end{proof}

\subsection{Quasi-stationary MRA and filter banks}
Next, let us discuss the MRA and filter bank structure for the
quasi-stationary construction. For the quasi-stationary case, the
 function $\fphi^j$ for the system
$\FAS_J(\fphi^j;\{\fPsi_j\}_{j=J}^\infty)$ is fixed as
$\fphi^j\equiv \fphi$. Consider the system
$\FAS_J(\fphi;\{\fPsi_j\}_{j=J}^\infty)$ defined as in
\eqref{def:DFAS-cone-shearlet-smooth-station}. It is also from a
`denser' directional framelet system. Recall that for this system
$\fphi(\xi):=\falpha_{\lambda,t,\rho}(\xi_1)\falpha_{\lambda,t,\rho}(\xi_2)$,
$\xi=(\xi_1,\xi_2)\in\R^2$  and
$\fPsi_j:=\{\fpsi^{j,\ell}(\sh_\ell\cdot):
\ell=-\ell_{\lambda^j},\ldots,\ell_{\lambda^j}\}$ with
$\fpsi^{j,\ell}(\xi) =
\fomega_{\lambda,t,\rho}(\dd_\lambda^{-j}\sh_{-\ell}\xi)\frac{\fgamma_{\varepsilon}(\xi_2/\xi_1)}{\sqrt{\fGamma_j((\sh_\ell\db_\lambda^j)^{-1}\xi)}}$,
$\xi\in\R^2$, which gives
\[
\begin{aligned}
\fpsi^{j,\ell}_{\sh_\ell\db_\lambda^j;0,\vk}(\xi)=\lambda^{-3j/2}
\fomega_{\lambda,t,\rho}(\lambda^{-2j}\xi)
\frac{\fgamma_{\varepsilon}(\lambda^j\xi_2/\xi_1+\ell)}{\sqrt{\fGamma_j(\xi)}}e^{-i\vk\cdot\sh_\ell\db_\lambda^j\xi}.
\end{aligned}
\]
Now define a new set
\begin{equation}\label{def:fpsijl-framelets-station}
\widetilde
\fPsi_j:=\{\widetilde\fpsi^{j,\ell},:\ell=-\ell_{\lambda^j},\ldots,\ell_{\lambda^j}\}\mbox{
with } \widetilde\fpsi^{j,\ell}(\xi) :=
\fomega_{\lambda,t,\rho}(\xi)\frac{\fgamma_{\varepsilon}(\lambda^j\xi_2/\xi_1+\ell)}{\sqrt{\fGamma_j(\xi)}},\;
\xi\in\R^2,
\end{equation}
which gives
\[
\begin{aligned}
\widetilde\fpsi^{j,\ell}_{\dn^j;0,\vk}(\xi)=\lambda^{-2j}\fomega_{\lambda,t,\rho}(\lambda^{-2j}\xi)
\frac{\fgamma_{\varepsilon}(\lambda^j\xi_2/\xi_1+\ell)}{\sqrt{\fGamma_j(\xi)}}e^{-i\vk\cdot\dn^j\xi}.
\end{aligned}
\]
We then use a fixed dilation matrix $\dm_\lambda$ for all generators
$\fphi$ and $\fpsi^{j,\ell}$. The  \emph{frequency-based  affine
system} is then defined to be
\begin{equation}
\begin{aligned}\label{def:cone-framelets-station}
\FWS_J(\fphi;\{\widetilde\fPsi_j\}_{j=J}^\infty)
= & \{\fphi_{\dn_{\lambda}^J;0,\vk}:\vk\in\Z^2\}\cup\{\fh_{\dn_{\lambda}^j;0,\vk},\fh_{\dn_{\lambda}^j\exchg;0,\vk}: \vk\in\Z^2, \fh\in\widetilde\fPsi_j\}_{j=J}^\infty\\
= &
\{\fphi_{\dn_{\lambda}^J;0,\vk}:\vk\in\Z^2\}\cup\{\widetilde\fpsi^{j,\ell}_{\dn_{\lambda}^j;0,\vk},\widetilde\fpsi^{j,\ell}_{\dn_{\lambda}^j\exchg;0,\vk}:
\vk\in\Z^2,
\ell=-\ell_{\lambda^j},\ldots,\ell_{\lambda^j}\}_{j=J}^\infty.
\end{aligned}
\end{equation}
%
Now similar to the result in
Theorem~\ref{thm:cone-framelets-non-station} for the non-stationary
construction, we have the following result for the quasi-stationary
construction.
\begin{theorem}\label{thm:cone-framelets-station}
Let $\lambda>1$ and  $0<\varepsilon\le1/2, 0<t,\rho\le1$. Let
$\FWS_J(\fphi;\{\widetilde\fPsi_j\}_{j=J}^\infty)$ be defined as in
\eqref{def:cone-framelets-station} with $\fphi$ and
$\widetilde\fPsi_j$ being given by \eqref{def:fphij} and
\eqref{def:fpsijl-framelets-station}, respectively. Then
$\FWS_J(\fphi;\{\widetilde\fPsi_j\}_{j=J}^\infty)$ is a
frequency-based affine tight frames for $L_2(\R^2)$ for all $J\ge
0$; that is,
$\{\fphi\}\cup\{\widetilde\fPsi_j\}_{j=J}^\infty\subseteq L_2(\R^2)$
and
\begin{equation}\label{def:tight-framelets-station}
\begin{aligned}
(2\pi)^2\|\ff\|_2^2=\sum_{\vk \in\Z^2}
|\ip{\ff}{\fphi_{\dn_{\lambda}^J;0,\vk}}|^2
&+\sum_{j=J}^\infty\sum_{\fh\in\widetilde\fPsi_j}\sum_{\vk \in\Z^2}
(|\ip{\ff}{\fh_{\dn_{\lambda}^{j};0,\vk}}|^2+|\ip{\ff}{\fh_{\dn_{\lambda}^{j}\exchg;0,\vk}}|^2)
\end{aligned}\; \forall \ff\in L_2(\R^2).
\end{equation}
All elements of $\FWS_J(\fphi;\{\widetilde\fPsi_j\}_{j=J}^\infty)$
are compactly supported functions in $C^\infty(\R^2)$. Moreover, let
$\FWS_J(\fphi;\{\fPsi_j\}_{j=J}^\infty)$ be the frequency-based
affine tight frame with $\fPsi_j=\{\fpsi^{j,\ell}:
\ell=-\ell_{\lambda^j},\ldots,\ell_{\lambda^j}\}$ being given in
\eqref{def:fpsij-station}. Then
$\FWS_J(\fphi;\{\fPsi_j\}_{j=J}^\infty)$ and
$\FWS_J(\fphi;\{\widetilde\fPsi_j\}_{j=J}^\infty)$ are connected to
each other by the following relations:
\begin{equation}\label{eq:Connection2}
\fpsi^{j,\ell}_{\sh_\ell\db_\lambda^j;0,\vk}=\lambda^{j/2}\widetilde\fpsi^{j,\ell}_{\dn_{\lambda}^j;0,\dd_\lambda^{j}\sh^\ell\vk},\quad
\fpsi^{j,\ell}_{\sh_\ell\db_\lambda^j\exchg;0,\vk}=\lambda^{j/2}\widetilde\fpsi^{j,\ell}_{\dn_{\lambda}^j\exchg;0,\dd_\lambda^{j}\sh^\ell\vk}.
\end{equation}
\end{theorem}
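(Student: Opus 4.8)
The plan is to verify the three hypotheses of Corollary~\ref{cor:positive} for the system $\FWS_J(\fphi;\{\widetilde\fPsi_j\}_{j=J}^\infty)$, which is exactly the route taken in Theorem~\ref{thm:cone-shear-station}. Since the generators here use the fixed dilation $\dm_\lambda$ rather than the shear-parabolic dilations, I would instead appeal to the directional framelet characterization \cite[Corollary~18]{Han:2012:FWS} as was done in Theorem~\ref{thm:cone-framelets-non-station}; the key algebraic identity I need is the partition-of-unity relation
\begin{equation}\label{eq:plan-pou}
|\fphi(\dn_\lambda^j\xi)|^2+\sum_{\fh\in\widetilde\fPsi_j}\bigl(|\fh(\dn_\lambda^j\xi)|^2+|\fh(\dn_\lambda^j\exchg\xi)|^2\bigr)=|\fphi(\dn_\lambda^{j+1}\xi)|^2,\quad a.e.\;\xi\in\R^2.
\end{equation}
The left-hand summation over $\widetilde\fPsi_j$ factors as $|\fomega_{\lambda,t,\rho}(\dn_\lambda^j\xi)|^2\fGamma_j(\xi)/\fGamma_j(\xi)=|\fomega_{\lambda,t,\rho}(\dn_\lambda^j\xi)|^2$, exactly because the $\fgamma_\varepsilon$-factors sum to $\fGamma_j$ by its definition in \eqref{def:fGamma} and then cancel the normalizing denominator $\sqrt{\fGamma_j}$; combining this with $|\fphi(\lambda^{-2}\cdot)|^2=|\fphi|^2+|\fomega_{\lambda,t,\rho}|^2$ from \eqref{def:fomega} gives \eqref{eq:plan-pou}. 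This is precisely the computation already displayed in the proof of Theorem~\ref{thm:cone-shear-station}, so no new work is needed there.

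First I would record the partition-of-unity identity \eqref{eq:plan-pou}, which is the translation of \eqref{eq:SimpleCond2} into the framelet setting (with the shear dilations replaced by $\dn_\lambda^j$); then \eqref{eq:SimpleCond3} holds because $\fphi=\falpha_{\lambda,t,\rho}\otimes\falpha_{\lambda,t,\rho}$ is continuous with $\fphi(0)=1$ and satisfies the uniform bound \eqref{eq:SimpleCond3Onfphi}, so Lemma~\ref{lemma:fphi} applies with $\fg\equiv1$. The non-overlapping condition \eqref{eq:SimpleCond1} follows since all generators are supported in $[-\rho\pi,\rho\pi]^2\subseteq[-\pi,\pi]^2$ with $\rho\le1$: the $\falpha,\fbeta$ factors kill overlaps in any coordinate with a nonzero integer shift, exactly as argued in Theorem~\ref{thm:cone-shear-station}. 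With these three conditions in hand, \cite[Corollary~18]{Han:2012:FWS} yields the tight-frame identity \eqref{def:tight-framelets-station} for all $J\ge0$, and smoothness is inherited from the smoothness of $\fphi$, $\fomega_{\lambda,t,\rho}$, $\fgamma_\varepsilon$, and $1/\sqrt{\fGamma_j}$ established in the preceding propositions.

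For the connection formulas \eqref{eq:Connection2} I would simply compute both sides explicitly in the frequency domain, as in the proof of Theorem~\ref{thm:cone-framelets-non-station}. Writing out
\[
\fpsi^{j,\ell}_{\sh_\ell\db_\lambda^j;0,\vk}(\xi)=\lambda^{-3j/2}\fomega_{\lambda,t,\rho}(\dn_\lambda^j\xi)\frac{\fgamma_\varepsilon(\lambda^j\xi_2/\xi_1+\ell)}{\sqrt{\fGamma_j(\xi)}}e^{-i\vk\cdot\sh_\ell\db_\lambda^j\xi}
\]
and the analogous expression for $\widetilde\fpsi^{j,\ell}_{\dn_\lambda^j;0,\dd_\lambda^j\sh^\ell\vk}$, the two differ only by the normalizing power of $\lambda$ and by the phase, and the phases match once one uses the factorization $\sh_\ell\db_\lambda^j=\sh_\ell\dd_\lambda^j\dn_\lambda^j$ together with $(\dd_\lambda^j\sh^\ell)^{\tp}\dn_\lambda^j=\sh_\ell\db_\lambda^j$. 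The flipped identity follows by the same algebra after inserting $\exchg$. I expect no genuine obstacle here: the entire statement is a nearly verbatim adaptation of Theorem~\ref{thm:cone-framelets-non-station} to the quasi-stationary generators, so the main care is purely bookkeeping — checking that the support and normalization of $\fGamma_j$ (Proposition~\ref{prop:fGamma}) make the cancellation in \eqref{eq:plan-pou} exact and that the lattice change $\dd_\lambda^j\sh^\ell$ in the phase is handled correctly.
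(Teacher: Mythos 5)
Your proposal is correct and follows essentially the same route as the paper, whose proof of this theorem simply defers to the argument of Theorem~\ref{thm:cone-framelets-non-station}: verify the partition-of-unity, non-overlapping, and limit conditions so that \cite[Corollary~18]{Han:2012:FWS} applies, then check \eqref{eq:Connection2} by direct computation using $\sh_\ell\db_\lambda^j=\sh_\ell\dd_\lambda^j\dn_\lambda^j$. Your bookkeeping (the cancellation of $\fGamma_j$, the $\lambda^{j/2}$ normalization, and the phase identity $(\dd_\lambda^j\sh^\ell\vk)\cdot\dn_\lambda^j\xi=\vk\cdot\sh_\ell\db_\lambda^j\xi$) all checks out.
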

\begin{proof}
The proof is essentially the same as the proof of Theorem~\ref{thm:cone-framelets-non-station}.
\end{proof}
Similarly,  if $\lambda$ is an integer, then
$\dd_\lambda^j\sh^\ell\Z^2\subseteq\Z^2$, and we can define a
sequence of filter banks. In this case, the low-pass filter $\fa$ of
$2\pi\Z^2$-periodic function for $\fphi$ is fixed as follows
\begin{equation}\label{def:fa-station}
\fa(\xi)=\fmu_{\lambda,t,\rho}(\xi_1)\fmu_{\lambda,t,\rho}(\xi_2),
\quad \xi\in\T^2.
\end{equation}
with $\fmu_{\lambda,t,\rho}$ being given by \eqref{def:fmask}. Note
that $\supp\widetilde\fpsi^{j,\ell}(\dm_\lambda\cdot)\subseteq\supp
\fphi$. Define $2\pi\Z^2$-periodic functions $\fb^{j,\ell}$ for
$\widetilde\fpsi^{j,\ell}$, $j\ge 0$   as follows.
\begin{equation}\label{def:fb-station}
\begin{aligned}
\fb^{j,\ell}(\xi):=\fb(\xi)\frac{\fgamma_{\varepsilon}(\lambda^j\xi_2/\xi_1+\ell)}{\fGamma_j(\xi)}
,\quad |\ell|\le \ell_{\lambda^j}.
\end{aligned}
\end{equation}
with $\fb(\xi):=\sqrt{\fg(\xi)-|\fa(\xi)|^2}$ for some function $\fg$ defined on $\T^2$ satisfying $\fg\equiv 1$ on the support of $\fphi$.
Then, we have the following result.
\begin{corollary}\label{cor:filterBank-station}
Let $\lambda>1$ be an integer. Choose $0<\varepsilon\le 1/2$,
$0<t\le1$, and $0<\rho<1$. Let
$\FWS_J(\fphi;\{\widetilde\fPsi_j\}_{j=J}^\infty)$, $J\ge0$ be
defined as in \eqref{def:cone-framelets-station} with
$\widetilde\fPsi_j$ being given in
\eqref{def:fpsijl-framelets-station} and let $\fa, \fb^{j,\ell}$ be
defined as in \eqref{def:fa-station} and \eqref{def:fb-station},
respectively. Then there exists $\fg\in C^\infty(\T^2)$ such that
 $\fa, \fb^{j,\ell}\in C^\infty(\T^2)$ for all $j\ge 0, \ell=-\ell_{\lambda^j},\ldots,\ell_{\lambda^j}$ and  we have
\begin{equation}\label{def:refinable2}
\fphi(\dm_{\lambda}\xi)=\fa(\xi)\fphi(\xi)
\quad\mbox{and}\quad
\widetilde\fpsi^{j,\ell}(\dm_{\lambda}\xi)=\fb^{j,\ell}(\xi)\fphi(\xi)
\end{equation}
 for a.e. $\xi\in\R^2$.  Moreover, $\{\fa;\fb^{j,\ell}, \fb^{j,\ell}(\exchg\cdot): \ell=-\ell_{\lambda^j},\ldots, \ell_{\lambda^j}\}$ is a filter bank having the perfect reconstruction property,
i.e.,
\begin{equation}\label{def:PR1-station}
\begin{aligned}
&|\fa(\xi)|^2+\sum_{\ell=-\ell_{\lambda^j}}^{\ell_{\lambda^j}}(|\fb^{j,\ell}(\xi)|^2+|\fb^{j,\ell}(\exchg\xi)|^2)=1,  \quad a.e.\;\xi\in\sigma_{\fphi},
\end{aligned}
\end{equation}
and
\begin{equation}\label{def:PR2-station}
\begin{aligned}
&\overline{\fa(\xi)} \fa(\xi+2\pi
\omega)+\sum_{\ell=-\ell_{\lambda^j}}^{\ell_{\lambda^{j}}}
\Big[\overline{\fb^{j,\ell}(\xi)}\fb^{j,\ell}(\xi+2\pi
\omega)+\overline{\fb^{j,\ell}(\exchg\xi)}\fb^{j,\ell}(\exchg\xi+2\pi
\omega)\Big]= 0
\end{aligned}
\end{equation}
for $a.e.\;\xi\in\sigma_{\fphi}\cap (\sigma_{\fphi}-2\pi\omega)$ and
for  $\omega\in\Omega_{\dm_{\lambda}}\backslash\{0\}$ with
$\Omega_{\dm_{\lambda}}= [\dm_{\lambda}^{-1}\Z^2]\cap[0,1)^2$ and
$\sigma_{\fphi}:=\{\xi\in\R^2: \sum_{\vk\in\Z^2}|\fphi(\xi+2\pi
\vk)|^2\neq0\}$.
\end{corollary}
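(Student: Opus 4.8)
The plan is to mirror the proof of Corollary~\ref{cor:filterBank}, exploiting the fact that here the scaling function $\fphi$ and hence the low-pass filter $\fa$ are independent of the scale $j$, which shortens the argument considerably. First I would record that, by Theorem~\ref{thm:cone-framelets-station}, the system $\FWS_J(\fphi;\{\widetilde\fPsi_j\}_{j=J}^\infty)$ is already a frequency-based affine tight frame for $L_2(\R^2)$ for every $J\ge 0$, with all generators compactly supported and $C^\infty$. It then remains only to exhibit the filter-bank structure, namely the refinement relations \eqref{def:refinable2}, the smoothness of the masks, and the perfect-reconstruction identities \eqref{def:PR1-station}--\eqref{def:PR2-station}.

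For the refinement relations I would argue directly from the tensor structure $\fphi=\falpha_{\lambda,t,\rho}\otimes\falpha_{\lambda,t,\rho}$ together with the identity $\falpha_{\lambda,t,\rho}(\lambda^2\xi)=\fmu_{\lambda,t,\rho}(\xi)\falpha_{\lambda,t,\rho}(\xi)$ established in the auxiliary proposition on $\falpha_{\lambda,t,\rho},\fmu_{\lambda,t,\rho}$. Since $\dm_\lambda=\lambda^2I_2$, applying this coordinatewise gives $\fphi(\dm_\lambda\xi)=\fa(\xi)\fphi(\xi)$ with $\fa$ as in \eqref{def:fa-station}. For the high-pass masks I would first use that $\fGamma_j(t\xi)=\fGamma_j(\xi)$ (Proposition~\ref{prop:fGamma}) and that the argument $\lambda^j\xi_2/\xi_1$ is invariant under dilation, so that $\widetilde\fpsi^{j,\ell}(\dm_\lambda\xi)=\fomega_{\lambda,t,\rho}(\lambda^2\xi)\,\fgamma_\varepsilon(\lambda^j\xi_2/\xi_1+\ell)/\sqrt{\fGamma_j(\xi)}$. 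Using $\fphi(\lambda^2\xi)=\fa(\xi)\fphi(\xi)$ and $\fphi\ge0$, one computes $\fomega_{\lambda,t,\rho}(\lambda^2\xi)=\sqrt{|\fphi(\xi)|^2-|\fphi(\lambda^2\xi)|^2}=\sqrt{1-|\fa(\xi)|^2}\,\fphi(\xi)$, which equals $\fb(\xi)\fphi(\xi)$ because $\fg\equiv1$ on $\supp\fphi$; substituting yields \eqref{def:refinable2}.

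The smoothness of the masks is where the real work sits. That $\fa\in C^\infty(\T^2)$ is immediate, being the tensor product of $\fmu_{\lambda,t,\rho}\in C^\infty(\T)$. For $\fb^{j,\ell}$ I would verify two things. First, that $\fb=\sqrt{\fg-|\fa|^2}$ can be made $C^\infty(\T^2)$: since $\supp\fa\subseteq\supp\fphi$ and $0\le\fa\le1$, choosing $\fg\equiv1$ on $\supp\fphi$ (possible because $\rho<1$ places $\supp\fphi$ strictly inside $[-\pi,\pi]^2$) forces $\fg-|\fa|^2\ge0$ everywhere, and arranging $\fg$ to vanish flatly off a slightly larger box makes the square root infinitely differentiable at its zeros by the same Taylor argument used for $\fbeta_{\lambda,t,\rho}$ and $\fomega_{\lambda,t,\rho}$ elsewhere in this paper. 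Second, that multiplying $\fb$ by $\fgamma_\varepsilon(\lambda^j\xi_2/\xi_1+\ell)/\fGamma_j(\xi)$ preserves smoothness: away from the origin this quotient is $C^\infty$ because $\fGamma_j>0$ (Proposition~\ref{prop:fGamma}) and $\fgamma_\varepsilon$ is $C^\infty$, while near the origin $\fb$ vanishes to infinite order (as $\fomega_{\lambda,t,\rho}(\lambda^2\cdot)=\fb\fphi$ is supported away from $0$ and $\fphi\equiv1$ there), which dominates any blow-up of the quotient's derivatives, exactly as in the smoothness proof of the $\fpsi^{j,\ell}$.

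Finally, with \eqref{def:refinable2} in hand and the tight-frame property from Theorem~\ref{thm:cone-framelets-station}, the perfect-reconstruction identities \eqref{def:PR1-station} and \eqref{def:PR2-station} follow by applying \cite[Theorem~17 and Corollary~18]{Han:2012:FWS} with dilation $\dm_\lambda$ and coset representatives $\Omega_{\dm_\lambda}$. I expect the main obstacle to be the clean bookkeeping of the support containment $\supp\widetilde\fpsi^{j,\ell}(\dm_\lambda\cdot)\subseteq\supp\fphi$ together with the flatness argument guaranteeing that the square root $\fb$ is genuinely $C^\infty$ across the zeros of $\fg-|\fa|^2$; once these support and flatness issues are settled, the remainder is a routine transcription of the non-stationary argument of Corollary~\ref{cor:filterBank} to the fixed-$\fphi$ setting.
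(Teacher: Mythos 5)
Your proposal is correct and follows essentially the same route as the paper's proof: establish the refinement relations from the mask identities and the homogeneity of $\fGamma_j$, choose $\fg\in C^\infty(\T^2)$ equal to $1$ on $\supp\fphi$ and vanishing outside a slightly larger box so that $\fb=(\fg-|\fa|^2)^{1/2}$ is smooth, note that $\fb$ vanishes near the origin to absorb the quotient $\fgamma_\varepsilon(\lambda^j\xi_2/\xi_1+\ell)/\fGamma_j(\xi)$, and invoke \cite[Theorem~17]{Han:2012:FWS} for the perfect-reconstruction identities. The only difference is that you spell out the flatness/Taylor argument for the square root and the smoothness of the quotient in more detail than the paper, which simply asserts these points.
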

\begin{proof}
By our choice of $\varepsilon, \rho, \lambda, t$,
$\FWS_J(\fphi;\{\widetilde\fPsi_j\}_{j=J}^\infty)$ defined as in
\eqref{def:cone-framelets-station} with $\widetilde\fPsi_j$ being
given in \eqref{def:fpsijl-framelets-station} is a frequency-based
affine tight frame for $L_2(\R^2)$ for all $J\ge 0$. By the
construction of $\fa, \fb^{j,\ell}$, it is easily seen that
\eqref{def:refinable2} holds. Now by \cite[Theorem
17]{Han:2012:FWS}, \eqref{def:PR1-station} and
\eqref{def:PR2-station} hold.

Since $\supp\fphi(\dm\cdot)$ is strictly inside $\supp\fphi$, by the
smoothness of $\fphi$, it is trivial  that $\fa\in C^\infty(\T^2)$.
We next show that there exists $\fg\in C^\infty(\T^2)$ such that
$\fb^{j,\ell}\in C^\infty(\T^2)$. Since $\rho<1$, we have
$\supp\fphi=[-\lambda^{-2}\rho\pi,\lambda^{-2}\rho\pi]^2\subseteq[-\rho\pi,\rho\pi]^2$
and $\supp\fomega_{\lambda,t,\rho}\subseteq[-\rho\pi,\rho\pi]^2$.
Then one can construct a function $\fg\in C^\infty(\T^2)$ such that
$\fg(\xi)\equiv1$ for $\xi\in
[-\lambda^{-2}\rho\pi,\lambda^{-2}\rho\pi]^2$ and $\fg(\xi)\equiv0$
for $\xi$ outside $[-\rho\pi,\rho\pi]^2$. Note that
$\supp\fomega_{\lambda,t,\rho}(\lambda^2\cdot)
\subseteq[-\lambda^{-2}\rho\pi,\lambda^{-2}\rho\pi]$ and we have
\[
\begin{aligned}
\fomega_{\lambda,t,\rho}(\lambda^2\xi)&=(|\fphi(\xi)|^2-|\fphi(\lambda^2\xi)|^2)^{1/2}
=(|\fphi(\xi)|^2-|\fa(\xi)\fphi(\xi)|^2)^{1/2}
\\&=(1-|\fa(\xi)|^2)^{1/2}\fphi(\xi)=(\fg(\xi)-|\fa(\xi)|^2)^{1/2}\fphi(\xi).
\end{aligned}
\]
Obviously, $(\fg(\xi)-|\fa(\xi)|^2)^{1/2}\in C^\infty(\T^2)$.
Then,
\[\begin{aligned}
\widetilde\fpsi^{j,\ell}(\dm_{\lambda}\xi)
=\fomega_{\lambda,t,\rho}(\lambda^2\xi)\frac{\fgamma_\varepsilon(\lambda^j\xi_2/\xi_1+\ell)}{\fGamma_j(\xi)}
=
\fb^{j,\ell}(\xi)\fphi(\xi)
\end{aligned}
\]
with $\fb^{j,\ell}(\xi):=\fb
(\xi)\frac{\fgamma_\varepsilon(\lambda^j\xi_2/\xi_1+\ell)}{\fGamma_j(\xi)}$
being a function in $\C^\infty(\T^2)$.
\end{proof}

\section{Discussion and Extension}
In this paper, we mainly investigate affine shear systems in
$L_2(\R^2)$ for the purpose of simplicity of presentation. Our
characterization and construction can  be easily extended to higher
dimensions. In $\R^d$ with $d\ge2$, the shear operator $\sh^\tau$
with $\tau=(\tau_2,\ldots,\tau_{d})\in\R^{d-1}$ and $\da_\lambda$
are of the form:
\[
\sh^\tau
=
\left[
\begin{matrix}
1 & \tau_2 &\ldots& \tau_{d}\\
0 & 1 & \ldots & 0\\
\vdots & \vdots & \ddots & \vdots\\
0 & 0 &\ldots &1\\
\end{matrix}
\right] \quad\mbox{and}\quad \da_\lambda = \left[
\begin{matrix}
\lambda^2 & 0 &\ldots& 0\\
0 & \lambda & \ldots & 0\\
\vdots & \vdots & \ddots & \vdots\\
0 & 0 &\ldots &\lambda\\
\end{matrix}
\right].
\]
Define $\sh_\tau:=(\sh^\tau)^\tp$ and denote $\exchg_n$ to be the
elementary matrix corresponding to the coordinate exchange between
the first axis and the $n$th one. For example, $\exchg_1 = I_d$ and
$\exchg_2=\diag(\exchg,I_{d-2})$. For $d=2$, we have
$\exchg_2=\exchg$. Let $\fPsi_j$ be given by
\[
\fPsi_j:=\{\fpsi(\sh_\ell\cdot): \ell_n=-r_j^n,\ldots,r_j^n, n =
2,\ldots,d\}\cup \{\fpsi^{j,\ell}(\sh_\ell\cdot):
|\ell_n|=r_j^n+1,\ldots,s_j^n, n = 2,\ldots,d\}
\]
with $\ell=(\ell_2,\ldots,\ell_d)\in\Z^{d-1}$ and $\fpsi,
\fpsi^{j,\ell}$ being functions in $L_2^{loc}(\R^d)$. For the low
frequency part, it corresponds to a function $\fphi^j\in
L_2^{loc}(\R^d)$. Then a frequency-base affine shear system in
$\R^d$ is defined to be
\begin{equation}\label{def:FAS-Rd}
\FASS_J(\fphi^J;\{\fPsi_j\}_{j=J}^\infty)=
\{\fphi^J_{\dn_\lambda^j;0,\vk}: \vk\in\dZ\}
\cup\{\fh_{\db_\lambda^j\exchg_n;0,\vk}: \vk\in\dZ, n=1,\ldots,d,\;
\fh\in\fPsi_j\}_{j=J}^\infty,
\end{equation}
where $\dn_\lambda:=\lambda^{-2}I_d$ and
$\db_\lambda:=(\da_\lambda)^{-\tp}$.

 All the characterizations for
frequency-based affine shear tight frames and sequences of
frequency-based affine shear frames can be carried over to the
$d$-dimensional case for the system defined as in
\eqref{def:FAS-Rd}. Since the essential idea of our smooth
non-stationary construction and smooth quasi-stationary construction
is frequency splitting, our 2D construction thus can be easily
extended to any high dimensions once an $\fomega^j$ is constructed
in a way satisfying
$\fomega^j=(|\fphi^{j+1}(\dn_\lambda\cdot)|^2-|\fphi^j|^2)^{1/2}$
for both the non-stationary and quasi-stationary construction.
Filter banks associated with high-dimensional frequency-based affine
systems can be obtained as well as  their connection to cone-adapted
high-dimensional directional framelets.

Several problems remain open in our study of affine shear tight
frames. For example, the existence and  construction of affine shear
tight frames with compactly supported generators in the spatial
doamin. If we drop the tightness requirement, there are  indeed
compactly supported shearlet frames, e.g., see \cite{Lim:cptsht}. In
view of the connection between affine shear tight frames and
cone-adapted directional framelets, one might want to consider the
existence and construction of cone-adapted directional framelets
with compactly supported generators first. Another problem is the
existence of affine shear tight frames with only one smooth
generator; that is, $\fPsi_j:=\{\fpsi(\sh_\ell\cdot): \ell\in I_j\}$
is from one generator $\fpsi$. Considering that the shear operator
along the seamlines is not consistent for both cones, our conjecture
is that there is even no affine shear tight frame with one single
generator that is continuous. In other words, additional seamline
generators seem to be unavoidable when considering cone-adapted
construction. When $\lambda>1$ is an integer, we know  that an
affine shear tight frame can be regarded as a subsystem of a
directional framelet through sub-sampling, from which an underlying
filter bank exists for the affine shear tight frame. However, when
$\lambda>1$ is not an integer, though an affine shear tight frame is
still related to a cone-adapted directional framelet  via
\eqref{eq:Connection} or \eqref{eq:Connection2}, the lattice
$\dd_\lambda^j\sh^\ell\Z^2$ is  no longer an integer lattice, the
sub-sampling procedure thus fails and we do not know whether there
is still an underlying filter bank for such an affine shear tight
frame.

%
%
%



\begin{thebibliography}{10}
\bibitem{Antoine}
J.-P. Antoine, R. Murenzi, and P. Vanderheynst, Directional wavelets
revisited: Cauchy wavelets and symmetry detection in pattern,
\emph{Appl. Comput. Harmon. Anal.} {\bf 6} (3) (1999) 314 -- 345.

\bibitem{CD}
E. J. Cand\`{e}s and D. L. Donoho, New tight frames of curvelets and
optimal representations of objects with piecewise $C^2$
singularities, \emph{Comm. Pure Appl. Math.} {\bf57} (2)  (2004)
219--266.

\bibitem{ChuiHeStockler}
C. K. Chui, W. He, and J. St\"{o}ckler, Compactly supported tight
and sibling frames with maximum vanishing moments, \emph{Appl.
Comput. Harmon. Anal.} {\bf 13} (3) (2002) 224 -- 262.

\bibitem{DoVetterli}
M. N. Do and M. Vetterli, Contourlets, in G. V. Welland, editor,
Beyond Wavelets, Academic Press, 2008.

\bibitem{Elad.Starck:MCA}
M. Elad, J.-L. Starck,  P. Querre, D. L. Donoho, Simultaneous
cartoon and texture image inpainting using morphological component
analysis (MCA), \emph{Appl. Comput. Harmon. Anal.} \textbf{19} (3)
(2005) 340--358.

%
%
\bibitem{Daub:book}
I.~Daubechies, Ten Lectures on Wavelets, CBMS-NSF Regional
Conference Series in Applied Mathematics, \textbf{61}, SIAM,
Philadelphia, PA, 1992.

\bibitem{DaubHanRonShen}
I.~Daubechies, B.~Han, A. Ron, and Z. Shen, Framelets: MRA-based
constructions of wavelet frames, \emph{Appl. Comput. Harmon. Anal.}
{\bf 14} (1) (2003)  1--46.

\bibitem{Donoho01}
 D. L. Donoho, Sparse components of images and optimal atomic
decomposition, \emph{Constr. Approx.} {\bf17} (2001) 353--382.

%


\bibitem{GLLWW04}
K. Guo, D. Labate, W. Lim, G. Weiss, and E. Wilson, Wavelets with
composite dilations, \emph{Electr. Res. Ann. AMS} {\bf10} (2004)
78--87.

\bibitem{GKL06}
K. Guo, G. Kutyniok, and D. Labate, {Sparse multidimensional
representations using anisotropic dilation and shear operators},
Wavelets and Splines (Athens, GA, 2005), Nashboro Press, Nashville,
TN (2006) 189-201.

\bibitem{GLLWW06:2006}
K. Guo, D. Labate, W. Lim, G. Weiss, and E. Wilson, Wavelets with
composite dilations and their MRA properties, \emph{Appl. Comput.
Harmon. Anal.} {\bf 20 } (2006)  231--249.

\bibitem{GuoLabate:2007}
K. Guo and D. Labate, Optimal sparse multidimensional representation
using shearlets, \emph{SIAM J. Math. Anal.} {\bf 9} (2007) 298--318.

\bibitem{GuoLabate:2009}
K. Guo, and D. Labate, Characterization and analysis of edges using
the continuous shearlet transform, \emph{SIAM J. Imag. Sci.} {\bf 2}
(2009) 959--986.

\bibitem{GuoLabate:2011}
 K. Guo, and D. Labate, Analysis and detection of surface
discontinuities using the 3D continuous shearlet transform,
\emph{Appl. Comput. Harmon. Anal.} {\bf30} (2011) 231--242.

\bibitem{GuoLabate:2012}
K. Guo and D. Labate, The construction of smooth Parseval frames of
shearlets, \emph{Math. Model. Nat. Phenom.}, to appear.

\bibitem{Han:97}
B.~Han, On dual wavelet tight frames, {\em Appl. Comput. Harmon.
Anal.} {\bf 4} (1997) 380--413.

%
%
\bibitem{Han:2012:FWS}
B.~Han, Nonhomogeneous wavelet systems in high dimensions,
\emph{Appl. Comput. Harmon. Anal.} {\bf 32} (2012) 169--196.
%
%
%
\bibitem{HanKutShen}
B. Han, G. Kutyniok, and Z. Shen, Adaptive multiresolution analysis
structures and shearlet systems, \emph{SIAM J.  Num. Anal.} {\bf49}
(2011) 1921--1946.



%

\bibitem{Houska}
R. Houska, The nonexistence of shearlet scaling functions,
\emph{Appl. Comput. Harmon. Anal.}  {\bf 32} (1) (2012) 28--44.

\bibitem{KingKutZhuang}
E. J. King, G. Kutyniok, and X. Zhuang, Analysis of data separation
and recovery problems using clustered sparsity, J. Math. Imaging
Vis., to appear.

\bibitem{Lim:cptsht}
P. Kittipoom, G. Kutyniok, and  W.-Q. Lim, Construction of compactly
supported shearlet frames, \emph{Constr. Appr.}  {\bf35} (1) (2012)
21--72.

\bibitem{KutLabate}
 G. Kutyniok and D. Labate, Resolution of the wavefront set using
continuous shearlets. \emph{Trans. Amer. Math. Soc.} {\bf361} (2009)
2719--2754.


\bibitem{shearlets:book}
G. Kutyniok and D. Labate, Shearlets: Multiscale Analysis for
Multivariate Data, Birkh\"{a}user, 2012.


\bibitem{KutLemvigLim}
 G. Kutyniok, J. Lemvig, and W.-Q Lim. Optimally sparse
approximations of 3D functions by compactly supported shearlet
frames, \emph{SIAM J. Math. Anal.} {\bf 44} (2012) 2962--3017.


\bibitem{KutLim}
G. Kutyniok and W.-Q. Lim, Compactly supported shearlets are
optimally sparse, \emph{J. Approx. Theory} {\bf 163} (11) (2011)
1564--1589.

\bibitem{KutSauer}
 G. Kutyniok and T. Sauer. Adaptive directional subdivision
schemes and shearlet multiresolution analysis, \emph{SIAM J. Math.
Anal.} {\bf41} (2009) 1436--1471.

\bibitem{KutShahZhuang}
G. Kutyniok, M. Sharam, and X. Zhuang, ShearLab: A rational design
of a digital parabolic scaling algorithm, \emph{SIAM J. Imaging
Sci.} {\bf 5} (4) (2012) 1291--1332.

\bibitem{Mallat:book}
S. Mallat, A Wavelet Tour of
Signal Processing, Academic Press, 2008.
%
%
%
%
%


\bibitem{RonShen97}
A. Ron and Z. Shen, Affine systems in $L_2(\R^d)$: The analysis of
the analysis operator, \emph{J. Funct.  Anal.} {\bf 148} (2) (1997)
408--447.


%

\bibitem{DualTreeCWT}
I. W. Selesnick, R. G. Baraniuk, and N. G. Kingsbury, The dual-tree
complex wavelet transform, \emph{IEEE Signal Process. Mag.} {\bf 22}
(6) (2005) 123--151.

\end{thebibliography}
\end{document}